\documentclass[a4paper,11pt]{article}

\usepackage{authblk}
\usepackage{parskip}
\usepackage{mathtools}
\usepackage{amssymb}
\usepackage{amsmath}
\usepackage{latexsym}
\usepackage{mathrsfs}  
\usepackage{bbm}       
\usepackage{amsthm}    
\usepackage{relsize}   
\usepackage{graphicx}
\usepackage{thmtools}  
\usepackage{mathrsfs}  
\usepackage{paralist}  
\usepackage{lipsum}    
\usepackage[all]{xy}   

\numberwithin{equation}{section}

\setlength{\textwidth}{16.5cm}
\setlength{\hoffset}{-2cm}
\setlength{\textheight}{23cm}	
\setlength{\voffset}{-1cm}

\setlength{\parindent}{10pt}
\addtolength{\parskip}{3mm}


\usepackage{titlesec}   

\titleformat{\section}[block]{\bfseries\filcenter}
{{\upshape\thesection\enspace}}{.5em}{}

\titleformat{\subsection}[block]{\filcenter}
{{\upshape\thesubsection\enspace}}{.5em}{} 

\titleformat{\subsubsection}[block]{\filcenter}
{{\upshape\thesubsubsection\enspace}}{.5em}{} 


\usepackage{enumitem}  
\setlist{nosep}  
\setitemize[0]{leftmargin=*}
\setenumerate[0]{leftmargin=*}
\setenumerate[1]{label={(\arabic*)}} 


\newcommand{\N}{\mathbb{N}}     
\newcommand{\R}{\mathbb{R}}     
\newcommand{\C}{\mathbb{C}}     
\newcommand{\Prob}{\mathbb{P}}  
\newcommand{\Exp}{\mathbb{E}}   
%
\newcommand{\goth}[1]{\mathfrak{#1}} 
\newcommand{\ind}[2]{\mathbbm{1}_{#1}\left( #2 \right)}          
\newcommand{\inner}[2]{\left\langle #1 \, , \, #2 \right\rangle} 
\newcommand{\norm}[1]{\left|\left|#1\right|\right|}              
\newcommand{\triplet}[3]{\left( #1, #2, #3 \right) }             
\newcommand{\ProbSpace}{\triplet{\Omega}{\mathscr{F}}{\Prob}}    
\newcommand{\abs}[1]{\left| #1 \right|}                          
\newcommand{\defeq}{\mathrel{\mathop:}=}                         

\newcommand\restr[2]{{
  \left.\kern-\nulldelimiterspace 
  #1 
  \vphantom{\big|} 
  \right|_{#2} 
  }}
  
\makeatletter
\newsavebox{\@brx}
\newcommand{\llangle}[1][]{\savebox{\@brx}{\(\m@th{#1\langle}\)}%
  \mathopen{\copy\@brx\kern-0.5\wd\@brx\usebox{\@brx}}}
\newcommand{\rrangle}[1][]{\savebox{\@brx}{\(\m@th{#1\rangle}\)}%
  \mathclose{\copy\@brx\kern-0.5\wd\@brx\usebox{\@brx}}}
\makeatother


\theoremstyle{plain} 
\newtheorem{theorem}{Theorem}[section]    
\newtheorem{proposition}[theorem]{Proposition} 
\newtheorem{corollary}[theorem]{Corollary}
\newtheorem{lemma}[theorem]{Lemma}
\newtheorem{assumption}[theorem]{Assumption}

\theoremstyle{definition} 
\newtheorem{definition}[theorem]{Definition}
\newtheorem{example}[theorem]{Example}
\newtheorem{remark}[theorem]{Remark}

 \title{Stochastic Evolution Equations with L\'{e}vy Noise in the Dual of a Nuclear Space}
\author{C. A. Fonseca-Mora}
\affil{  Escuela de Matem\'{a}tica, Universidad de Costa Rica, San Jos\'{e}, 11501-2060, Costa Rica. \\

\noindent E-mail:  christianandres.fonseca@ucr.ac.cr }

\date{}

\begin{document}

 \maketitle

\abstract{In this article we give sufficient and necessary conditions for the existence of a weak and mild solution to stochastic evolution equations with (general) L\'{e}vy noise taking values in the dual of a nuclear space. As part of our approach we develop a theory of stochastic integration with respect to a L\'{e}vy process taking values in the dual of a nuclear space. 
We also derive further properties of the solution such as the existence of a solution with square moments, the Markov property and path regularity of the solution. In the final part of the paper we give sufficient conditions for the weak convergence of the solutions to a sequence of stochastic evolution equations with L\'{e}vy noises.  }

\smallskip

\emph{2020 Mathematics Subject Classification:}  60H15, 60H05, 60G17, 60B12.

\emph{Key words and phrases:} L\'{e}vy processes; dual of a nuclear space; stochastic integrals; stochastic evolution equations, weak convergence.  

\section{Introduction}

The main objective of this paper is to study properties of the solutions to the following class of time-inhomogeneous (linear) stochastic evolution equations 
\begin{equation}\label{eqSEELevyIntro}
\begin{cases}
d Y_{t}=A(t)' Y_{t}dt +\int_{\Phi'} R(t,f) L(dt,df), \quad t \geq 0, \\
Y_{0}=\eta, 
\end{cases}
\end{equation}
here $L$ is a L\'{e}vy process taking values in the dual $\Phi'$ to a quasi-complete, bornological nuclear space $\Phi$, $(A(t)':t \geq 0)$ is the family of dual operators to a family $(A(t):t \geq 0)$ of closed linear operators which generates a backward evolution system $(U(s,t): 0 \leq s \leq t < \infty)$ of continuous linear operators on a quasi-complete, bornological nuclear space $\Psi$, and the coefficient $R$ maps $[0,\infty) \times \Omega \times \Phi'$ into the space $\mathcal{L}(\Phi',\Psi')$ of continuous linear operators from $\Phi'$ into $\Psi'$. 

A particular case of \eqref{eqSEELevyIntro} corresponds to 
the following class of (generalized) Langevin equations 
\begin{equation}\label{eqGeneLangEquatIntro}
d Y_{t}=A(t)' Y_{t}dt +B' dL_{t}, \quad t \geq 0.
\end{equation}
here $B' \in \mathcal{L}(\Phi',\Psi')$. Solutions to \eqref{eqGeneLangEquatIntro} are known as (generalized) Ornstein-Uhlenbeck processes. They arise in many investigations on fluctuations limits of infinite particle systems of independent Brownian motions and different types of diffusion systems (see e.g. \cite{BojdeckiGorostiza:1986, DawsonFleischmannGorostiza:1986, HitsudaMitoma:1986, KallianpurMitoma:1992, KallianpurWolpert:1984, Mitoma:1985, Mitoma:1987}). Other applications include the modelling of the dynamics of nerve signals  \cite{KallianpurWolpert:1984}, environmental pollution \cite{KallianpurXiong}, and statistical filtering \cite{KorezliogluMartias:1984}. 

Properties of solutions to stochastic evolution equations of the form \eqref{eqGeneLangEquatIntro} have been studied by many authors (e.g. \cite{BojdeckiGorostiza:1991,  BojdeckiGorostiza:1999, BojdeckiGorostiza:2001,  BojdeckiJakubowski:1999, DawsonGorostiza:1990, FernandezGorostiza:1992, Ito, KallianpurPerezAbreu:1988,  KallianpurPerezAbreu:1989, PerezAbreuTudor:1992, PerezAbreuTudor:1994, Ustunel:1982-2}). In most of these works the authors assumed that $\Phi$ and $\Psi$ are nuclear Fr\'{e}chet spaces, and the driving noise $L$ is a Wiener process or a square integrable martingale, although the case where $L$ is assumed to be a semimartingale with independent increments has also been considered (e.g. see \cite{BojdeckiGorostiza:1991}).

In the case of quasi-complete, bornological nuclear spaces and under the time homogeneous setting $A(t)=A$,  the author introduced in \cite{FonsecaMora:2018-1} sufficient conditions for the existence and uniqueness of weak and mild solutions to non-linear stochastic evolution equations with L\'{e}vy driven multiplicative noise. These sufficient conditions requires that $A$ be the generator of a $(C_{0},1)$-semigroup $(S(t): t \geq 0)$ on $\Psi$ whose dual semigroup $(S(t)': t \geq 0)$ is also a $(C_{0},1)$-semigroup on $\Psi'$ (see definitions in Sect. \ref{subSectEvoSystem}) and the coefficients satisfy some growth and Lipschitz type conditions. However, in the case of linear equations the conditions introduced in \cite{FonsecaMora:2018-1} are rather restrictive and does not cover the time inhomogeneous setting described in \eqref{eqSEELevyIntro}. Further properties of solutions were not studied in \cite{FonsecaMora:2018-1}.  


Motivated by the above, in the work we we present a complete theory of existence and uniqueness of solutions to the general stochastic evolution equation \eqref{eqSEELevyIntro}, we derive some of the fundamental properties of the solution and its trajectories, and we further explore the problem of finding sufficient conditions for the weak convergence of a sequence of stochastic evolution equations of the form \eqref{eqGeneLangEquatIntro}.

We start in Sect. \ref{sectPrelim} with some preliminaries on nuclear spaces and their duals, and basic properties of cylindrical and stochastic processes. In Sect. \ref{sectLevyStochInteg} we introduce a theory of stochastic integration for operator-valued processes with respect to L\'{e}vy processes in duals of nuclear spaces. The stochastic integral is constructed as a sum of the stochastic integrals with respect to each of the components of the L\'{e}vy process according to its L\'{e}vy-It\'{o} decomposition developed in \cite{FonsecaMora:Levy}.   

The stochastic integral with respect to the drift term is defined  using the regularization theorem (see \cite{FonsecaMora:2018}). In the case of the martingale part of the decomposition, the stochastic integral is defined using the theory of stochastic integration with respect to cylindrical martingale-valued measures developed in \cite{FonsecaMora:2018-1}. In the case of the large jumps term in the decomposition the stochastic integral is defined as a finite random sum as in \cite{Applebaum:2006}. The class of stochastic integrands (see Definition \ref{defiIntegrandsLevy}) corresponds to families $R: [0, \infty) \times \Omega \times \Phi' \rightarrow \mathcal{L}(\Phi',\Psi')$ satisfying some weak predictability and weak square integrability conditions in terms of the characteristics of the L\'{e}vy process in its L\'{e}vy-Khintchine formula (see \cite{FonsecaMora:Levy}). We are not aware of any other work that considers stochastic integrals with respect to L\'{e}vy processes in our general context of quasi-complete, bornological nuclear space. 

Our study of existence and uniqueness of ``weak" solutions to \eqref{eqSEELevyIntro} is carried out in Sect. \ref{sectExisteUniqueSEE}. We start by applying our recently introduced theory of stochastic integration to define the stochastic convolution of the forward evolution system $(U(t,s)': 0 \leq s \leq t < \infty)$ (of the dual operators) with respect to the L\'{e}vy process $L$, and by studying some of its properties. The corresponding ``mild'' solution defined by the stochastic convolution (see \eqref{eqDefMildSoluSEELevy}) is latter shown to be weak a solution to \eqref{eqSEELevyIntro} (see Theorem \ref{theoExistSoluSEELevy}). On the other hand, under a mild assumption on the regularity of the solutions (that holds for example if the solution has finite moments or c\`{a}dl\`{a}g paths) it is shown in Theorem \ref{theoUniqueSoluSEELevy} that any weak solution to \eqref{eqSEELevyIntro} is a version of the mild solution. Here it is worth to mention that unlike other works that establish existence and uniqueness of solutions to equations of the form \eqref{eqGeneLangEquatIntro} (e.g. \cite{BojdeckiGorostiza:1991, KallianpurPerezAbreu:1988}), we do not require for the family of generators $(A(t): t \geq 0)$ that each $A(t)$ be a continuous and linear operator on $\Psi$.

In Sect. \ref{sectPropertiesSolution} we study further properties of the solutions to \eqref{eqSEELevyIntro}. We start by considering the existence of solutions with square moments under the assumption that $L$ has square moments. In particular, in Theorem \ref{theoExisUniqStronSquaIntegSolution} we show that there exists a unique solution $(X_{t}: t \geq 0)$ to  \eqref{eqSEELevyIntro} which has the property that for each $t>0$ there exists a Hilbert space $\Psi'_{\varrho}$ embedded in $\Psi$, $\Psi'_{\varrho}$ equipped with a Hilbertian norm $\varrho'$, such that $\Exp \int_{0}^{t} \varrho'(X_{s})^{2} ds < \infty$. In a second part of this section we will concentrate our efforts in the study of equations of the form \eqref{eqGeneLangEquatIntro}, where we prove that the solution is a Markov process (Proposition \ref{propOrnsUhleIsMarkov}). Moreover, we will show  in Theorem \ref{theoCadlagVersiContiGenerator}
that under the hypothesis that each $A(t)$ is a continuous and linear operator on $\Psi$ there exists a unique c\`{a}dl\`{a}g solution to \eqref{eqGeneLangEquatIntro}. 

Examples and applications of our results will be given in Sect. \ref{secExamApplica}. In particular, we will see that our results generalize to the L\'{e}vy noise setting those results obtained in \cite{KallianpurPerezAbreu:1988} for stochastic equations driven by square-integrable martingale noise in the dual of a Fr\'{e}chet nuclear space (see Example \ref{examplePertubEquations} for details). 

Finally, in Sect. \ref{weakConvSEE} we provide sufficient conditions for the weak convergence of the   solutions to the sequence of (generalized) Langevin equations
$$ dY^{n}_{t}=A^{n}(t)'Y^{n} dt+ (B^{n})' dL^{n}_{t}, \quad Y^{n}_{0}=\eta^{n}_{0}. $$
Our main result (Theorem \ref{theoWeakConvOrnsUhleProce}) generalize those obtained by other authors (see \cite{FernandezGorostiza:1992, KallianpurPerezAbreu:1989, PerezAbreuTudor:1992}) from the context of Fr\'{e}chet spaces to complete, bornological nuclear spaces. Furthermore, we also formulate (see Theorem \ref{theoWeakConvOrnsUhleProceVerCharac}) sufficient conditions for the weak convergence of solutions in terms of convergence on finite-dimensional distributions of the sequence of L\'{e}vy processes $L^{n}$ and some properties of the sequence of the characteristics of each $L^{n}$ in its L\'{e}vy-Khintchine formula.

\section{Preliminaries}\label{sectPrelim}

Let $\Phi$ be a locally convex space (we will only consider vector spaces over $\R$). $\Phi$ is \emph{quasi-complete} if each bounded and closed subset of it is complete. $\Phi$ is called  \emph{bornological} (respectively \emph{ultrabornological}) if it is the inductive limit of a family of normed (respectively Banach)  spaces. A \emph{barreled space} is a locally convex space such that every convex, balanced, absorbing and closed subset is a neighborhood of zero. Every quasi-complete bornological space is ultrabornological and hence barrelled. For further details see \cite{Jarchow, NariciBeckenstein}.   
 
If $p$ is a continuous semi-norm on $\Phi$ and $r>0$, the closed ball of radius $r$ of $p$ given by $B_{p}(r) = \left\{ \phi \in \Phi: p(\phi) \leq r \right\}$ is a closed, convex, balanced neighborhood of zero in $\Phi$. A continuous semi-norm (respectively a norm) $p$ on $\Phi$ is called \emph{Hilbertian} if $p(\phi)^{2}=Q(\phi,\phi)$, for all $\phi \in \Phi$, where $Q$ is a symmetric, non-negative bilinear form (respectively inner product) on $\Phi \times \Phi$. Let $\Phi_{p}$ be the Hilbert space that corresponds to the completion of the pre-Hilbert space $(\Phi / \mbox{ker}(p), \tilde{p})$, where $\tilde{p}(\phi+\mbox{ker}(p))=p(\phi)$ for each $\phi \in \Phi$. The quotient map $\Phi \rightarrow \Phi / \mbox{ker}(p)$ has an unique continuous linear extension $i_{p}:\Phi \rightarrow \Phi_{p}$.  Let $q$ be another continuous Hilbertian semi-norm on $\Phi$ for which $p \leq q$. In this case, $\mbox{ker}(q) \subseteq \mbox{ker}(p)$. Moreover, the inclusion map from $\Phi / \mbox{ker}(q)$ into $\Phi / \mbox{ker}(p)$ is linear and continuous, and therefore it has a unique continuous extension $i_{p,q}:\Phi_{q} \rightarrow \Phi_{p}$. Furthermore, we have the following relation: $i_{p}=i_{p,q} \circ i_{q}$. 

Let $p$ and $q$ be continuous Hilbertian semi-norms on $\Phi$ such that $p \leq q$.
The space of continuous linear operators (respectively Hilbert-Schmidt operators) from $\Phi_{q}$ into $\Phi_{p}$ is denoted by $\mathcal{L}(\Phi_{q},\Phi_{p})$ (respectively $\mathcal{L}_{2}(\Phi_{q},\Phi_{p})$) and the operator norm (respectively Hilbert-Schmidt norm) is denote by $\norm{\cdot}_{\mathcal{L}(\Phi_{q},\Phi_{p})}$ (respectively $\norm{\cdot}_{\mathcal{L}_{2}(\Phi_{q},\Phi_{p})}$).

We denote by $\Phi'$ the topological dual of $\Phi$ and by $\inner{f}{\phi}$ the canonical pairing of elements $f \in \Phi'$, $\phi \in \Phi$. Unless otherwise specified, $\Phi'$ will always be consider equipped with its \emph{strong topology}, i.e. the topology on $\Phi'$ generated by the family of semi-norms $( \eta_{B} )$, where for each $B \subseteq \Phi$ bounded we have $\eta_{B}(f)=\sup \{ \abs{\inner{f}{\phi}}: \phi \in B \}$ for all $f \in \Phi'$.  If $p$ is a continuous Hilbertian semi-norm on $\Phi$, then we denote by $\Phi'_{p}$ the Hilbert space dual to $\Phi_{p}$. The dual norm $p'$ on $\Phi'_{p}$ is given by $p'(f)=\sup \{ \abs{\inner{f}{\phi}}:  \phi \in B_{p}(1) \}$ for all $ f \in \Phi'_{p}$. Moreover, the dual operator $i_{p}'$ corresponds to the canonical inclusion from $\Phi'_{p}$ into $\Phi'$ and it is linear and continuous.


Let $( q_{\gamma}(\cdot): \gamma \in \Gamma )$ be a family of seminorms generating the strong topology on $\Phi'$. Fix $T>0$ and denote by $D_{T}(\Phi')$ the collection of all c\`{a}dl\`{a}g (i.e. right-continuous with left limits) maps from $[0,T]$ into $\Phi'$. For a given $ \gamma \in \Gamma$ we consider the pseudometric $d_{\gamma}$ on $D_{T}(\Phi')$ given by  
\begin{equation}\label{defSkorokhodPseudometrics}
d_{\gamma}(x,y)=\inf_{\lambda \in \Lambda}  \left\{ \sup_{t \in [0,T]} q_{\gamma}(x(t)-y(\lambda(t))) + \sup_{0 \leq s< t \leq T} \abs{\log \frac{\lambda(t)-\lambda(s)}{t-s}} \right\},
\end{equation} 
for all $x, y \in D_{T}(\Phi')$, where $\Lambda$ denotes the set of all the strictly increasing continuous maps $\lambda$ from $[0,T]$ onto itself. The family of seminorms $(d_{\gamma}: \gamma \in \Gamma )$ generates a completely regular topology on $D_{T}(\Phi')$ that is known as the \emph{Skorokhod  topology} (also known as the $J1$ topology).

Let $D_{\infty}(\Phi')$  denote the space of mappings $x: [0,\infty) \rightarrow \Phi'_{\beta}$ wich are c\`{a}dl\`{a}g. For every $s \geq 0$, let $r_{s}: D(\Phi'_{\beta}) \rightarrow D_{s+1}(\Phi'_{\beta})$  be given by
$$r_{s}(x)(t) =
\begin{cases}
x(t) & \mbox{ if } t \in [0,s],\\
(s+1-t)x(t) & \mbox{ if } t \in [s,s+1].
\end{cases}
$$
For every $\gamma \in \Gamma$ let
$$d^{\infty}_{\gamma}(x,y)=\sum_{n =1}^{\infty} \frac{1}{2^{n}} \left( 1 \wedge d^{n}_{\gamma}(r_{n}(x),r_{n}(y)) \right),$$
where for each $n \in \N$, $d^{n}_{\gamma}$ is the pseudometric defined in  \eqref{defSkorokhodPseudometrics} for $T=n$.  The \emph{Skorokhod topology} in $D_{\infty}(\Phi')$ is a completely regular topology generated by the family of pseudometrics $( d^{\infty}_{\gamma}: \gamma \in \Gamma)$. For further details see \cite{FonsecaMora:Skorokhod, Jakubowski:1986}.

Let us recall that a (Hausdorff) locally convex space $(\Phi,\mathcal{T})$ is called \emph{nuclear} if its topology $\mathcal{T}$ is generated by a family $\Pi$ of Hilbertian semi-norms such that for each $p \in \Pi$ there exists $q \in \Pi$, satisfying $p \leq q$ and the canonical inclusion $i_{p,q}: \Phi_{q} \rightarrow \Phi_{p}$ is Hilbert-Schmidt. Other equivalent definitions of nuclear spaces can be found in \cite{Pietsch, Treves}. 

Let $\Phi$ be a nuclear space. If $p$ is a continuous Hilbertian semi-norm  on $\Phi$, then the Hilbert space $\Phi_{p}$ is separable (see \cite{Pietsch}, Proposition 4.4.9 and Theorem 4.4.10, p.82). Now, let $( p_{n} : n \in \N)$ be an increasing sequence of continuous Hilbertian semi-norms on $(\Phi,\mathcal{T})$. We denote by $\theta$ the locally convex topology on $\Phi$ generated by the family $( p_{n} : n \in \N)$. The topology $\theta$ is weaker than $\mathcal{T}$. We  will call $\theta$ a (weaker) \emph{countably Hilbertian topology} on $\Phi$ and we denote by $\Phi_{\theta}$ the space $(\Phi,\theta)$ and by $\widehat{\Phi}_{\theta}$ its completion. The space $\widehat{\Phi}_{\theta}$ is a (not necessarily Hausdorff) separable, complete, pseudo-metrizable (hence Baire and ultrabornological; see Example 13.2.8(b) and Theorem 13.2.12 in \cite{NariciBeckenstein}) locally convex space and its dual space satisfies $(\widehat{\Phi}_{\theta})'=(\Phi_{\theta})'=\bigcup_{n \in \N} \Phi'_{p_{n}}$ (see \cite{FonsecaMora:2018}, Proposition 2.4). 

The following are all examples of complete, ultrabornological (hence barrelled) nuclear spaces: 
the spaces of functions $\mathscr{E}_{K} \defeq \mathcal{C}^{\infty}(K)$ ($K$: compact subset of $\R^{d}$) and $\mathscr{E}\defeq \mathcal{C}^{\infty}(\R^{d})$, the rapidly decreasing functions $\mathscr{S}(\R^{d})$, and the space of test functions $\mathscr{D}(U) \defeq \mathcal{C}_{c}^{\infty}(U)$ ($U$: open subset of $\R^{d}$), as well are the spaces of distributions $\mathscr{E}'_{K}$, $\mathscr{E}'$, $\mathscr{S}'(\R^{d})$, and $\mathscr{D}'(U)$. Other examples are the space of harmonic functions $\mathcal{H}(U)$ ($U$: open subset of $\R^{d}$), the space of polynomials $\mathcal{P}_{n}$ in $n$-variables and the space of real-valued sequences $\R^{\N}$ (with direct product topology). For references see \cite{Pietsch, Schaefer, Treves}.


Throughout this work we assume that $\ProbSpace$ is a complete probability space and consider a filtration $( \mathcal{F}_{t} : t \geq 0)$ on $\ProbSpace$ that satisfies the \emph{usual conditions}, i.e. it is right continuous and $\mathcal{F}_{0}$ contains all subsets of sets of $\mathcal{F}$ of $\Prob$-measure zero. We denote by $L^{0} \ProbSpace$ the space of equivalence classes of real-valued random variables defined on $\ProbSpace$. We always consider the space $L^{0} \ProbSpace$ equipped with the topology of convergence in probability and in this case it is a complete, metrizable, topological vector space. 
We denote by $\mathcal{P}_{\infty}$ the predictable $\sigma$-algebra on $[0, \infty) \times \Omega$ and for any $T>0$, we denote by $\mathcal{P}_{T}$ the restriction of $\mathcal{P}_{\infty}$ to $[0,T] \times \Omega$.  

A \emph{cylindrical random variable}\index{cylindrical random variable} in $\Phi'$ is a linear map $X: \Phi \rightarrow L^{0} \ProbSpace$ (see \cite{FonsecaMora:2018}). If $X$ is a cylindrical random variable in $\Phi'$, we say that $X$ is \emph{$n$-integrable} ($n \in \N$)  if $ \Exp \left( \abs{X(\phi)}^{n} \right)< \infty$, $\forall \, \phi \in \Phi$, and has \emph{zero-mean} if $ \Exp \left( X(\phi) \right)=0$, $\forall \phi \in \Phi$. The \emph{Fourier transform} of $X$ is the map from $\Phi$ into $\C$ given by $\phi \mapsto \Exp ( e^{i X(\phi)})$.

Let $X$ be a $\Phi'$-valued random variable, i.e. $X:\Omega \rightarrow \Phi'_{\beta}$ is a $\mathscr{F}/\mathcal{B}(\Phi'_{\beta})$-measurable map. For each $\phi \in \Phi$ we denote by $\inner{X}{\phi}$ the real-valued random variable defined by $\inner{X}{\phi}(\omega) \defeq \inner{X(\omega)}{\phi}$, for all $\omega \in \Omega$. The linear mapping $\phi \mapsto \inner{X}{\phi}$ is called the \emph{cylindrical random variable induced/defined by} $X$. We will say that a $\Phi'$-valued random variable $X$ is \emph{$n$-integrable} ($n \in \N$) if the cylindrical random variable induced by $X$ is \emph{$n$-integrable}. 
 
Let $J=\R_{+} \defeq [0,\infty)$ or $J=[0,T]$ for  $T>0$. We say that $X=( X_{t}: t \in J)$ is a \emph{cylindrical process} in $\Phi'$ if $X_{t}$ is a cylindrical random variable for each $t \in J$. Clearly, any $\Phi'$-valued stochastic processes $X=( X_{t}: t \in J)$ induces/defines a cylindrical process under the prescription: $\inner{X}{\phi}=( \inner{X_{t}}{\phi}: t \in J)$, for each $\phi \in \Phi$. 

If $X$ is a cylindrical random variable in $\Phi'$, a $\Phi'$-valued random variable $Y$ is called a \emph{version} of $X$ if for every $\phi \in \Phi$, $X(\phi)=\inner{Y}{\phi}$ $\Prob$-a.e. A $\Phi'$-valued process $Y=(Y_{t}:t \in J)$ is said to be a $\Phi'$-valued \emph{version} of the cylindrical process $X=(X_{t}: t \in J)$ on $\Phi'$ if for each $t \in J$, $Y_{t}$ is a $\Phi'$-valued version of $X_{t}$.  

For a $\Phi'$-valued process $X=( X_{t}: t \in J)$ terms like continuous, c\`{a}dl\`{a}g, purely discontinuous, adapted, predictable, etc. have the usual (obvious) meaning. 

A $\Phi'$-valued random variable $X$ is called \emph{regular} if there exists a weaker countably Hilbertian topology $\theta$ on $\Phi$ such that $\Prob( \omega: X(\omega) \in (\widehat{\Phi}_{\theta})')=1$. If $\Phi$ is barrelled, the property of being regular is  equivalent to the property that the law of $Y$ is a Radon measure on $\Phi'$ (see Theorem 2.10 in \cite{FonsecaMora:2018}). A $\Phi'$-valued process $Y=(Y_{t}:t \in J)$ is said to be \emph{regular} if $Y_{t}$ is a regular random variable for each $t \in J$.

\begin{assumption} All through this article $\Phi$ and $\Psi$ will denote two quasi-complete, bornological, nuclear spaces.
\end{assumption}

\section{L\'{e}vy Processes and Stochastic Integrals} \label{sectLevyStochInteg}

Our main objective in this section is to review some properties on L\'{e}vy processes in duals of nuclear spaces and to define stochastic integrals with respect to these processes. 

\subsection{L\'{e}vy processes in duals of nuclear spaces}
\label{subSectLevyDualNuclear}

Recall from \cite{FonsecaMora:Levy} that a $\Phi'$-valued process $L=( L_{t} : t\geq 0)$ is called a \emph{L\'{e}vy process} if \begin{inparaenum}[(i)] \item  $L_{0}=0$ a.s., 
\item $L$ has \emph{independent increments}, i.e. for any $n \in \N$, $0 \leq t_{1}< t_{2} < \dots < t_{n} < \infty$ the $\Phi'$-valued random variables $L_{t_{1}},L_{t_{2}}-L_{t_{1}}, \dots, L_{t_{n}}-L_{t_{n-1}}$ are independent,  
\item L has \emph{stationary increments}, i.e. for any $0 \leq s \leq t$, $L_{t}-L_{s}$ and $L_{t-s}$ are identically distributed, and  
\item For every $t \geq 0$ the distribution $\mu_{t}$ of $L_{t}$ is a Radon measure and the mapping $t \mapsto \mu_{t}$ from $\R_{+}$ into the space $\goth{M}_{R}^{1}(\Phi')$ of Radon probability measures on $\Phi'$ is continuous at $0$ when $\goth{M}_{R}^{1}(\Phi')$  is equipped with the weak topology. \end{inparaenum} It is shown in Corollary 3.11 in \cite{FonsecaMora:Levy} that $L=( L_{t} : t\geq 0)$ has a regular, c\`{a}dl\`{a}g version $\tilde{L}=( \tilde{L}_{t} : t \geq 0)$ that is also a L\'{e}vy process. Moreover, there exists a weaker countably Hilbertian topology $\vartheta$ on $\Phi$ such that $\tilde{L}$ is a $(\widehat{\Phi}_{\vartheta})'$-valued c\`{a}dl\`{a}g process. We will therefore identify $L$ with $\tilde{L}$.

Let $N=\{N(t,A): \, t \geq 0, A \in \mathcal{B}(\Phi' \setminus \{ 0\})\}$ be the \emph{Poisson random measure} associated to $L$, i.e. $ N(t,A)= \sum_{0 \leq s \leq t} \ind{A}{\Delta L_{s}}$, $\forall \, t \geq 0$, $A \in \mathcal{B}( \Phi' \setminus \{ 0\})$, with respect to the ring $\mathcal{A}$ of all the subsets of $\Phi' \setminus \{0\}$ that are \emph{bounded below} (i.e. $A \in \mathcal{A}$ if $0 \notin \overline{A}$, where $\overline{A}$ is the closure of $A$). The corresponding compensator measure of $N$ is of the form $\nu(\omega; dt; df)= dt \nu (df)$, where $\nu$ is a \emph{L\'{e}vy measure} on $\Phi'$ in the following sense (see \cite{FonsecaMora:Levy}, Theorem 4.11):
\begin{enumerate}
\item $\nu (\{ 0 \})=0$, 
\item for each neighborhood of zero $U \subseteq \Phi'$, the  restriction $\restr{\nu}{U^{c}}$ of $\nu$ on the set $U^{c}$ belongs to the space $\goth{M}^{b}_{R}(\Phi')$ of bounded Radon measures on $\Phi'$,    
\item there exists a continuous Hilbertian semi-norm $\rho$ on $\Phi$ such that 
\begin{equation} \label{integrabilityPropertyLevyMeasure}
\int_{B_{\rho'}(1)} \rho'(f)^{2} \nu (df) < \infty,  \quad \mbox{and} \quad  \restr{\nu}{B_{\rho'}(1)^{c}} \in \goth{M}^{b}_{R}(\Phi'), 
\end{equation}
where  $\rho'$ is the dual norm of $\rho$ and $B_{\rho'}(1) \defeq B_{\rho}(1)^{0}=\{ f \in \Phi': \rho'(f) \leq 1\}$. 
\end{enumerate}
Here is important to stress the fact that the seminorm $\rho$ satisfying \eqref{integrabilityPropertyLevyMeasure} is not unique. Indeed, any continuous Hilbert semi-norm $q$ on  such that $\rho \leq q$ satisfies \eqref{integrabilityPropertyLevyMeasure}.

It is shown in Theorem 4.17 in \cite{FonsecaMora:Levy} that relative to a continuous Hilbertian seminorm $\rho$ on $\Phi$ satisfying \eqref{integrabilityPropertyLevyMeasure},  for each $t \geq 0$, $L_{t}$ admits the unique representation
\begin{equation} \label{eqLevyItoDecomposition}
L_{t}=t\goth{m}+W_{t}+\int_{B_{\rho'}(1)} f \widetilde{N} (t,df)+\int_{B_{\rho'}(1)^{c}} f N (t,df)
\end{equation}
that is usually called the \emph{L\'{e}vy-It\^{o} decomposition} of $L$. In \eqref{eqLevyItoDecomposition}, we have that $\goth{m} \in \Phi'$, $\widetilde{N}(dt,df)= N(dt,df)-dt \, \nu(df)$ is the compensated Poisson random measure, and $( W_{t} : t \geq 0)$ is a $\Phi'$-valued L\'{e}vy process with continuous paths (also called a $\Phi'$-valued Wiener process) with zero-mean (i.e. $\Exp (\inner{W_{t}}{\phi}) = 0$ for each $t \geq 0$ and $\phi \in \Phi$) and \emph{covariance functional} $\mathcal{Q}$ satisfying 
\begin{equation}\label{covarianceFunctWienerProcess}
\Exp \left( \inner{W_{t}}{\phi} \inner{W_{s}}{\varphi} \right) = ( t \wedge s ) \mathcal{Q} (\phi, \varphi), \quad \forall \, \phi, \varphi \in \Phi, \, s, t \geq 0. 
\end{equation}
Observe that $\mathcal{Q}$ is a continuous, symmetric, non-negative bilinear form on $\Phi \times \Phi$. The associated Hilbertian seminorm defined by $\mathcal{Q}(\cdot, \cdot)$ will be denoted by $\mathcal{Q}(\cdot)$. The process $\int_{B_{\rho'}(1)} f \widetilde{N} (t,df)$, $t\geq 0$, is a $\Phi'$-valued zero-mean, square integrable, c\`{a}dl\`{a}g L\'{e}vy process with second moments given by $\Exp \left( \abs{ \inner{\int_{B_{\rho'}(1)} f \widetilde{N} (t,df)}{\phi} }^{2}\right) = t \int_{B_{\rho'}(1)} \abs{\inner{f}{\phi}}^{2} \nu (df)$ $\forall \, t \geq 0$ and $\phi \in \Phi$, and the process $\int_{B_{\rho'}(1)^{c}} f N (t,df)$  $\forall t\geq 0$ is a $\Phi'$-valued c\`{a}dl\`{a}g L\'{e}vy process defined  by means of a Poisson integral with respect to the Poisson random measure $N$ of $L$ on the set $B_{\rho'}(1)^{c}$ (see Section 4.1 in \cite{FonsecaMora:Levy} for more information on Poisson integrals in duals of nuclear spaces). It is important to remark that all the random components of the representation \eqref{eqLevyItoDecomposition} are independent.

The Fourier transform of the L\'{e}vy process $L=( L_{t}: t\geq 0)$ is given by the \emph{L\'{e}vy-Khintchine formula} which characterizes it uniquely (see \cite{FonsecaMora:Levy}, Theorem 4.18): for each $t \geq 0$, $\phi \in \Phi$, 
\begin{equation} \label{levyKhintchineFormulaLevyProcess}
\begin{split}
& \Exp \left( e^{i \inner{L_{t}}{\phi} } \right) = e^{t \xi(\phi)}, \quad  \mbox{ with} \\ 
& \xi(\phi)= i \inner{\goth{m}}{\phi} - \frac{1}{2} \mathcal{Q}(\phi)^{2} + \int_{\Phi'} \left( e^{i \inner{f}{\phi}} -1 - i \inner{f}{\phi} \ind{ B_{\rho'}(1)}{f} \right) \nu(d f).  
\end{split}
\end{equation}
where the \emph{characteristics} $(\goth{m}, \mathcal{Q},  \nu, \rho)$ of $L$ are as described above.

\subsection{Stochastic Integration}\label{subSectStochIntegLevy}

Our next objective is to define stochastic integrals with respect to the $\Phi'$-valued L\'{e}vy process $L=(L_{t}: t \geq 0)$. Our plan is to define the stochastic integral of $L$ as a sum of stochastic integrals with respect to 
each term in the L\'{e}vy-It\^{o} decomposition \eqref{eqLevyItoDecomposition}. We show the existence and properties of these stochastic integrals in the following paragraphs. 

Let $R: [0, \infty) \times \Omega \times \Phi' \rightarrow \mathcal{L}(\Phi',\Psi')$ satisfying that for every $\psi \in \Psi$, $T>0$,  the mapping $(t,\omega,f) \mapsto R(t,\omega,f)'\psi$ is $\mathcal{P}_{T} \otimes \mathcal{B}(\Phi')/\mathcal{B}(\Phi)$-measurable. 

\begin{enumerate}
\item \textbf{Drift stochastic integral:} Let $h:[0, \infty) \times \Omega \rightarrow \Phi'$ such that for every $T>0$ the mapping $(t,\omega) \mapsto h(t,\omega)$ is $\mathcal{P}_{T}/\mathcal{B}(\Phi')$-measurable.
Assume moreover that 
$$ \Prob \left( \omega \in \Omega:  \int_{0}^{T} \, \abs{\inner{h(r, \omega)}{R(r, \omega,0)'\psi}} \, dr < \infty \right)=1, \quad \forall \, T>0, \, \psi \in \Psi. $$
Then there exists a $\Phi'$-valued, regular, continuous process $\int_{0}^{t} R(r,0) h(r) dr$, $t \geq 0$, satisfying $\Prob$-a.e.
\begin{equation}\label{eqDefiDetermiIntegDriftGeneral}
\inner{\int_{0}^{t} R(r,\omega,0)h(r,\omega) dr}{\psi}= \int_{0}^{t} \inner{h(r,\omega)}{R(r,\omega,0)'\psi} ds, \quad \forall \, t \geq 0,  \omega \in \Omega, \psi \in \Psi. 
\end{equation}
If we furthermore have for some $n \geq 1$ that 
\begin{equation}\label{eqFiniteNMomentDriftGeneral}
\Exp \int_{0}^{T} \, \abs{\inner{h(r)}{R(r,0)'\psi}}^{n} \, dr < \infty , \quad \forall \psi \in \Psi, 
\end{equation}
then $\int_{0}^{t} R(r,0) h(r) dr$, $t \geq 0$, is $n$-integrable. 

We will show that such a process exists. 
To do this we will need the following information on absolutely continuous functions. For $t>0$, let $A C_{t}$ denotes the linear space of all absolutely continuous functions on $[0,t]$ which are zero at $0$.
It is well-known (see Theorem 5.3.6 in \cite{BogachevMT}, p.339) that $G \in A C_{t}$ if and only if there exists an integrable function $g$ defined on $[0,t]$ such that:
\begin{equation} \label{defDensityAbsContFunct}
G(s)= \int^{s}_{0} g(r) dr, \quad \forall s \in [0,t].
\end{equation}
The space $A C_{t}$ is a Banach space equipped with the norm $\norm{\cdot}_{A C_{t}}$ given by $\norm{G}_{A C_{t}}= \int^{t}_{0} \abs{g(r)} dr$,  for $G \in A C_{t}$ with $g$ satisfying \eqref{defDensityAbsContFunct}.

Let $X_{0}(\psi)=0$ for every $\psi \in \Psi$,  and for each $t>0$, define $X_{t}: \Psi \rightarrow L^{0}(\Omega, \mathcal{F}, \Prob; AC_{t}) $ by 
$$X_{t}(\psi)(\omega)(s)=
\begin{cases}
\int_{0}^{s} \inner{h(r,\omega)}{R(r,\omega,0)'\psi} dr, & \mbox{for }  s \in [0,t], \, \omega \in \Omega_{t,\psi}, \\
0, & \mbox{elsewhere},
\end{cases}
$$
for every $\psi \in \Psi$, where $\Omega_{t,\psi}=\{ \omega \in \Omega:  \int_{0}^{T} \, \abs{\inner{h(r,\omega)}{R(r,\omega,0)'\psi}} \, dr < \infty\}$. The mapping $X_{t}$ is  linear. We will show it is sequentially closed, where $L^{0}(\Omega, \mathcal{F}, \Prob; AC_{t})$ is equipped with the topology of convergence in probability in the norm $\norm{\cdot}_{A C_{t}}$. Suppose $\psi_{n} \rightarrow \psi$ and $X_{t}(\psi_{n}) \rightarrow G$ in $L^{0}(\Omega, \mathcal{F}, \Prob; AC_{t})$. There exists a subsequence $(\psi_{n_{k}})$ such that $\psi_{n_{k}} \rightarrow \psi$ and $X_{t}(\psi_{n_{k}}) \rightarrow G$ in $AC_{t}$ $\Prob$-a.e. Since $G \in L^{0}(\Omega, \mathcal{F}, \Prob; AC_{t})$, there exists a stochastic process $g(t,\omega)$ satisfying \eqref{defDensityAbsContFunct} for every $\omega \in \Omega$. Since $
\inner{h(r,\omega)}{R(r, \omega,0)'\psi_{n_{k}}} \rightarrow \inner{h(r,\omega,0)}{R(r,\omega)'\psi}$ for all $r \in [0,s]$, $\omega \in \Omega$, then by Fatou's lemma we have $\Prob$-a.e.
\begin{multline*}
\norm{X_{t}(\psi)(\omega)-G(\omega)}_{AC_{t}} =  \int^{t}_{0} \lim_{k \rightarrow \infty} \abs{\inner{h(r,\omega)}{R(r,\omega,0)'\psi_{n_{k}}}-g(r,\omega)} dr \\ 
\leq  \liminf_{k \rightarrow \infty}  \int^{t}_{0} \abs{\inner{h(r,\omega)}{R(r,\omega,0)'\psi_{n_{k}}}-g(r,\omega)} dr= \lim_{k \rightarrow \infty}  \norm{X_{t}(\psi_{n_{k}})(\omega)-G(\omega)}_{AC_{t}}=0.
\end{multline*}
Since $\Psi$ is ultrabornological and $L^{0}(\Omega, \mathcal{F}, \Prob; AC_{t})$ is a complete, metrizable, topological vector space, the closed graph theorem (Theorem 14.7.3 in \cite{NariciBeckenstein}, p.475) shows that $X_{t}$ is continuous. Since the projection mapping $\pi_{t}: L^{0}(\Omega, \mathcal{F}, \Prob; AC_{t}) \rightarrow L^{0}(\Omega, \mathcal{F}, \Prob)$, $\pi_{t}(Y)(\omega)=Y(\omega)(t)$ $\forall \omega \in \Omega$, is linear and continuous, then $(\pi_{t} \circ X_{t}: t \geq 0)$ defines a cylindrical process in $\Psi'$ such that for each $t \geq 0$, $\pi_{t} \circ X_{t}: \Psi \rightarrow  L^{0}(\Omega, \mathcal{F}, \Prob)$ is continuous. Hence, as for each $\psi \in \Psi$, $t \mapsto X_{t}(\psi)$ is continuous, the regularization theorem for ultrabornological spaces (Corollary 3.11 in \cite{FonsecaMora:2018}) shows that there exists a $\Psi'$-valued regular continuous process $\int_{0}^{t} R(r,0) h(r) dr$, $t \geq 0$, that is a version of $(\pi_{t} \circ X_{t}: t \geq 0)$, i.e. satisfying 
\eqref{eqDefiDetermiIntegDriftGeneral}. 

Finally, the fact that $\int_{0}^{t} R(r,0) h(r) dr$, $t \geq 0$, is $n$-integrable whenever \eqref{eqFiniteNMomentDriftGeneral} is satisfied is a direct consequence of \eqref{eqDefiDetermiIntegDriftGeneral} and Jensen's inequality. 

\item \textbf{Wiener and compensated Poisson stochastic integrals:} 
If we assume that for every $T >0$ and $\psi \in \Psi$ the mapping $R$ satisfies 
\begin{equation*}
\Exp \int_{0}^{T} \, \mathcal{Q}(R(r,0)'\psi)^{2} \, dr< \infty, 
\end{equation*}  
then we can define the Wiener stochastic integral $\int_{0}^{t}  R(r,0) dW_{r}$, $t \geq 0$, using the 
the theory of (strong) stochastic integration developed in Section 5 in \cite{FonsecaMora:2018-1} with respect to the martingale valued measure 
$$M_{1}(t,A)=W_{t} \delta_{0}(A), \quad \mbox{ for } \, t \geq 0, \, A \in \mathcal{B}(\{0\}),$$
(see Example 3.4 in \cite{FonsecaMora:2018-1}). The Wiener stochastic integral is a $\Psi'$-valued zero-mean, square integrable martingale with continuous paths.  

Let $U \in \mathcal{B}(\Phi')$ be such that $\int_{U} \abs{\inner{f}{\phi}}^{2} \nu(df) < \infty$ for all $\phi \in \Phi$. If we assume that for every $T >0$ and $\psi \in \Psi$ the mapping $R$ satisfies 
\begin{equation*}
\Exp \int_{0}^{T} \, \int_{U} \abs{\inner{f}{R(r,f)'\psi}}^{2} \nu(df)\,  \, dr< \infty, 
\end{equation*}  
then we can define the compensated Poisson stochastic integral on $U$, $\int_{0}^{t} \int_{U} R(r,f) \widetilde{N}(dr,df)$, $t \geq 0$, using the 
the theory of (strong) stochastic integration developed in Section 5 in \cite{FonsecaMora:2018-1} with respect to the martingale valued measure 
\begin{equation}\label{eqDefiPoissonMartValuMea}
M_{2}(t,A)=\int_{A} f \widetilde{N}(t,df), \quad \mbox{ for } \, t \geq 0, \, A \in \mathcal{A} \cap U,
\end{equation}
(see Example 3.5 in \cite{FonsecaMora:2018-1}). 
The compensated Poisson stochastic integral on $U$ is a $\Psi'$-valued zero-mean, square integrable c\`{a}dl\`{a}g martingale. 

For all $t \geq 0$,  let
$$ \int^{t}_{0} \int_{U} R (r,f) M (dr, df) \defeq \int_{0}^{t}  R(s,0) dW_{s} + \int_{0}^{t} \int_{U}  R(s,f) \widetilde{N}(ds,du). $$
A simple application of Proposition 4.12 in  \cite{FonsecaMora:2018-1} shows that $\int^{t}_{0} \int_{U} R (r,f) M (dr, df)$ can be defined equivalently as the stochastic integral of $R$ with respect to the L\'{e}vy martingale-valued measure 
\begin{equation} \label{levyMartValuedMeasExam} 
M(t,A) = W_{t} \delta_{0}(A) + \int_{A \backslash \{0 \}} f \widetilde{N}(t,df), \quad \mbox{ for } \, t \geq 0, \, A \in \mathcal{R}, 
\end{equation}
where $\mathcal{R}= \{  U \cap \Gamma: \Gamma \in \mathcal{A} \} \cup \{ \{0\}\}$. 

For every $\psi \in \Psi$, $\Prob$-a.e. $\forall t \in [0,T]$, we have the following \emph{weak-strong compatibility}:
\begin{equation}\label{eqWeakStrongCompLevyMartValMeas}
 \inner{\int^{t}_{0} \int_{U} R (r,f) M (dr, df)}{\psi}=\int^{t}_{0} \int_{U} R (r,f)'\psi M (dr, df),
\end{equation}
where the real-valued (weak) stochastic integral on the right-hand side of the above equality is defined by Theorem 4.7 in \cite{FonsecaMora:2018-1}.  


\item \textbf{Poisson stochastic integral:} 
Assume that $V \in \mathcal{A}$, i.e. $0 \notin \overline{V}$. For each $t \geq 0$ we define the Poisson stochastic integral:
\begin{equation} \label{eqDefiPoissonInteg}
\int_{0}^{t} \int_{V} R(s,f)  N(ds,df)(\omega) 
= \sum_{0 \leq s \leq t} R(s, \omega, \Delta L_{s}(\omega)) \Delta L_{s}(\omega) \ind{V}{\Delta L_{s}(\omega)},
\end{equation}
which is a finite (random) sum since for $\Prob$-a.e. $\omega \in \Omega$ the trajectories $t \mapsto L_{t}(\omega)$ only have a finite number of jumps on the time integral $[0,t]$ and $0 \notin \overline{V}$ (see the discussion in Section 4.2 in \cite{FonsecaMora:Levy}). By definition, the Poisson stochastic integral \eqref{eqDefiPoissonInteg} is a $\Psi'$-valued regular adapted process. Moreover, if $(\tau_{n}: n \in \N \cup \{ \infty\})$ are the arrival times of the Poisson process $(N(t, V): t \geq 0)$, the sum in \eqref{eqDefiPoissonInteg} is a fixed random variable on each interval $[\tau_{n},\tau_{n+1})$ and hence the Poisson stochastic integral has c\`{a}dl\`{a}g paths.

For any $\psi \in \Psi$ we define the (weak) Poisson stochastic integral:
\begin{equation} \label{eqDefiWeakPoissonInteg}
\int_{0}^{t} \int_{V} \, R(s,f)' \psi \,   N(ds,df)(\omega) \\ 
= \sum_{0 \leq s \leq t} \inner{\Delta L_{s}(\omega)}{R(s,\omega,\Delta L_{s}(\omega))'\psi} \ind{V}{\Delta L_{s}(\omega)}, 
\end{equation}
which is a real-valued adapted c\`{a}dl\`{a}g process by the reasons explained above. We immediately obtain the \emph{weak-strong compatibility} for Poisson integrals:
\begin{equation}\label{eqWeakStrongCompPoissonInteg}
 \inner{\int_{0}^{t} \int_{V} R(s,f)   N(ds,df)}{\psi}=\int_{0}^{t} \int_{V} \, R(s,f)' \psi \,   N(ds,df). 
\end{equation} 
\end{enumerate}

\begin{remark} \label{remaCompenPoissInteg}
Suppose for $V \in \mathcal{A}$ that $\int_{0}^{T} \int_{V} \abs{\inner{f}{R(r,f)' \phi}}^{2} \, \nu(df)dr < \infty$ for every $T>0$ and $\psi \in \Psi$. We may then define
$$ \int_{0}^{t} \int_{V} \, R(s,f)' \psi \, \widetilde{N}(ds,df)= \int_{0}^{t} \int_{V} \, R(s,f)' \psi \,   N(ds,df)- \int_{0}^{t} \int_{V} \inner{f}{R(r,f)' \phi} \, \nu(df)dr. $$
One can check by going to simple processes (see Definition 4.4 and equation (4.7) in \cite{FonsecaMora:2018-1}) that the above definition is consistent with our earlier definition of the compensated Poisson integral defined with respect to the compensated Poisson random measure \eqref{eqDefiPoissonMartValuMea}.    
\end{remark}

We have now all the necessary ingredients to introduce the stochastic integral with respect to a L\'{e}vy process. 
We start by defining the class of integrands for our theory. 

\begin{definition}\label{defiIntegrandsLevy}
Let $L=(L_{t}: t \geq 0)$ be a $\Phi'$-valued L\'{e}vy process with L\'{e}vy-It\^{o} decomposition  \eqref{eqLevyItoDecomposition}. Let $\Lambda(L,\Phi,\Psi)$ denotes the collection of all the mappings $R: [0, \infty) \times \Omega \times \Phi' \rightarrow \mathcal{L}(\Phi',\Psi')$ satisfying the following properties: 
\begin{enumerate}
\item For every $\psi \in \Psi$, $T>0$,  the mapping $(t,\omega,f) \mapsto R(t,\omega,f)'\psi$ is $\mathcal{P}_{T} \otimes \mathcal{B}(\Phi')/\mathcal{B}(\Phi)$-measurable.
\item For every $T >0$ and $\psi \in \Psi$ the mapping $R$ satisfies 
\begin{equation*}
\Exp \int_{0}^{T} \, \left[  \abs{\inner{\goth{m}}{R(s,0)'\psi}}^{2} + \mathcal{Q}(R(s,0)'\psi)^{2} + \int_{B_{\rho'}(1)} \abs{\inner{f}{R(s,f)'\psi}}^{2} \nu(df)\, \right] ds < \infty.
\end{equation*}
\end{enumerate}
\end{definition}

We define the \emph{(strong) stochastic integral of $R$ with respect to $L$} as the $\Psi'$-valued  regular adapted c\`{a}dl\`{a}g process defined for each $t \geq 0$ by
\begin{eqnarray}
\int_{0}^{t} \int_{\Phi'} R(s,f) L(ds,df)
& = & \int_{0}^{t}  R(s,0) \goth{m}  ds + \int_{0}^{t} \int_{B_{\rho'}(1)}  R(s,f)  M(ds,df)  \nonumber \\
& {} & + \int_{0}^{t} \int_{B_{\rho'}(1)^{c}} R(s,f)   N(ds,df). \label{eqDefiLevyIntegral}   
\end{eqnarray} 

Likewise, for every $t \geq 0$ and $\psi \in \Psi$ we define the \emph{(weak) stochastic integral of $R$ with respect to $L$} as the real-valued adapted process:
\begin{eqnarray}
\int_{0}^{t} \int_{\Phi'} R(s,f)'\psi \, L(ds,df)
 & = & \int_{0}^{t} \inner{\goth{m}}{R(s,0)'\psi} ds
+ \int^{t}_{0} \int_{B_{\rho'}(1)} R (s,f)'\psi M (ds, df) \nonumber \\
& {} & + \int_{0}^{t} \int_{B_{\rho'}(1)^{c}} \, R(s,f)' \psi \,   N(ds,df).  \label{eqDefiWeakLevyIntegral}
\end{eqnarray} 

From  \eqref{eqDefiDetermiIntegDriftGeneral},  \eqref{eqWeakStrongCompLevyMartValMeas} and \eqref{eqWeakStrongCompPoissonInteg} we have the following \emph{weak-strong compatibility}: for every $\psi \in \Psi$, $\Prob$-a.e. $\forall t \in [0,T]$, 
\begin{equation}\label{eqWeakStrongCompLevyInteg}
\inner{\int_{0}^{t} \int_{\Phi'} R(s,f) L(ds,df)}{\psi}
=\int_{0}^{t} \int_{\Phi'} R(s,f)'\psi \, L(ds,df).  
\end{equation} 

\begin{proposition}\label{propActiLineOperLevyInteg} Let $\Upsilon$ be a quasi-complete, bornological, nuclear space and let $S \in \mathcal{L}(\Psi',\Upsilon')$. Then, we have $\Prob$-a.e.
\begin{equation} \label{eqActionLineOpeLevyInteg}
S \left( \int_{0}^{t} \int_{\Phi'}  \,  R(s,f) \, L(ds,df) \right) = \int_{0}^{t} \int_{\Phi'} \, S R(s,f) \, L(ds,df), \quad \forall \, t\geq 0. 
\end{equation}
\end{proposition}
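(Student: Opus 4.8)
The plan is to reduce the $\Upsilon'$-valued identity \eqref{eqActionLineOpeLevyInteg} to the scalar weak--strong compatibility \eqref{eqWeakStrongCompLevyInteg} already established for the L\'{e}vy integral, and then to upgrade the resulting family of scalar equalities to an equality of processes. The first ingredient is the transpose of $S$. Since $\Psi$ and $\Upsilon$ are quasi-complete, bornological, nuclear spaces, they are barrelled and semireflexive, hence reflexive; therefore the transpose of $S \in \mathcal{L}(\Psi',\Upsilon')$ is a continuous linear operator $S' \in \mathcal{L}(\Upsilon,\Psi)$ characterised by $\inner{Sg}{\zeta}=\inner{g}{S'\zeta}$ for all $g \in \Psi'$, $\zeta \in \Upsilon$. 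A direct computation with the canonical pairings then yields the operator identity $(SR(s,\omega,f))'\zeta = R(s,\omega,f)'S'\zeta$ for every $\zeta \in \Upsilon$, since for $g \in \Phi'$ one has $\inner{SR(s,f)g}{\zeta}=\inner{R(s,f)g}{S'\zeta}=\inner{g}{R(s,f)'S'\zeta}$.

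First I would check that $SR \in \Lambda(L,\Phi,\Upsilon)$, so that the right-hand side of \eqref{eqActionLineOpeLevyInteg} is well defined as a $\Upsilon'$-valued regular adapted c\`{a}dl\`{a}g process in the sense of \eqref{eqDefiLevyIntegral}. Both conditions of Definition \ref{defiIntegrandsLevy} for $SR$ follow from those for $R$ by substituting $\psi = S'\zeta \in \Psi$: the map $(t,\omega,f)\mapsto (SR(t,\omega,f))'\zeta = R(t,\omega,f)'S'\zeta$ is $\mathcal{P}_{T}\otimes\mathcal{B}(\Phi')/\mathcal{B}(\Phi)$-measurable by property (1) for $R$, and the square-integrability bound for $SR$ tested against $\zeta$ is exactly the bound for $R$ tested against $S'\zeta$.

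Next, fixing $\zeta \in \Upsilon$ and combining the definition of $S'$, the weak--strong compatibility \eqref{eqWeakStrongCompLevyInteg} for $R$ applied with $\psi=S'\zeta$, the operator identity $(SR)'\zeta=R'S'\zeta$, and finally \eqref{eqWeakStrongCompLevyInteg} for the integrand $SR$, I obtain $\Prob$-a.e. for all $t \geq 0$:
\begin{align*}
\inner{ S\Bigl( \int_{0}^{t} \int_{\Phi'} R(s,f)\,L(ds,df) \Bigr) }{\zeta}
&= \inner{ \int_{0}^{t} \int_{\Phi'} R(s,f)\,L(ds,df) }{S'\zeta} \\
&= \int_{0}^{t} \int_{\Phi'} R(s,f)'S'\zeta \, L(ds,df) \\
&= \int_{0}^{t} \int_{\Phi'} (SR(s,f))'\zeta \, L(ds,df) \\
&= \inner{ \int_{0}^{t} \int_{\Phi'} SR(s,f)\,L(ds,df) }{\zeta}.
\end{align*}
Thus the two $\Upsilon'$-valued processes appearing in \eqref{eqActionLineOpeLevyInteg} agree weakly against each fixed $\zeta$ for all $t$, outside a $\zeta$-dependent $\Prob$-null set.

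The hard part will be to upgrade this family of scalar identities (one null set per $\zeta$) into a single $\Prob$-null set outside which \eqref{eqActionLineOpeLevyInteg} holds in $\Upsilon'$ for all $t \geq 0$. Here I would use regularity together with separability coming from nuclearity. The right-hand process is regular by construction, while the left-hand process is the image under the continuous linear map $S$ of a regular process, hence regular as well, because the pushforward of a Radon law under a continuous linear map is again Radon and, $\Upsilon$ being barrelled, regularity is equivalent to having a Radon law by Theorem 2.10 in \cite{FonsecaMora:2018}. Consequently there is a common weaker countably Hilbertian topology $\theta$ on $\Upsilon$ (obtained as the join of the two topologies furnished by regularity) such that, $\Prob$-a.e., both processes take values in $(\widehat{\Upsilon}_{\theta})'$. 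Choosing a countable subset of $\Upsilon$ that is dense in the separable space $\widehat{\Upsilon}_{\theta}$ produces a countable family separating the points of $(\widehat{\Upsilon}_{\theta})'$; intersecting the countably many null sets associated with this family (and with $T \in \N$) gives a single null set outside which the two processes coincide when tested against every member of the family at every $t$, and hence coincide as elements of $(\widehat{\Upsilon}_{\theta})' \subseteq \Upsilon'$ for all $t \geq 0$. This establishes \eqref{eqActionLineOpeLevyInteg}.
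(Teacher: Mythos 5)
Your proof is correct, but it takes a genuinely different route from the paper's. The paper argues term by term along the decomposition \eqref{eqDefiLevyIntegral}: for the drift part it performs the same weak computation as you and then upgrades via Proposition 2.12 in \cite{FonsecaMora:2018}; for the integral against the L\'{e}vy martingale-valued measure it simply cites Proposition 5.18 in \cite{FonsecaMora:2018-1} (which gives the \emph{strong} commutation of $S$ with that integral directly); and for the large-jumps part it reads the identity off the finite random sum \eqref{eqDefiPoissonInteg}. You instead treat the full integral at once: you verify $SR \in \Lambda(L,\Phi,\Upsilon)$ (a point the paper leaves implicit but which is needed for the right-hand side of \eqref{eqActionLineOpeLevyInteg} to be defined), establish the transpose identity $(SR)'\zeta = R'S'\zeta$, and chain the weak--strong compatibility \eqref{eqWeakStrongCompLevyInteg} for $R$ and for $SR$, deferring everything to a single regularization step at the end. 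What your approach buys is independence from Proposition 5.18 of the companion paper, at the cost of having to perform the weak-to-strong upgrade for the whole c\`{a}dl\`{a}g process rather than only for the continuous drift part. On that last step, your construction of ``a common weaker countably Hilbertian topology $\theta$ valid for all $t$ simultaneously'' is the one slightly glib point: regularity as defined in the paper is a pointwise-in-$t$ property, so a single $\theta$ serving all $t\geq 0$ at once needs a little more justification (e.g.\ via the c\`{a}dl\`{a}g structure of each summand). The cleaner finish is exactly the one the paper uses for its drift term: both sides are regular processes, the right-hand side is c\`{a}dl\`{a}g by construction and the left-hand side is the image of a c\`{a}dl\`{a}g regular process under the continuous map $S$, and two such processes that are weak versions of one another are indistinguishable by Proposition 2.12 in \cite{FonsecaMora:2018}. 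With that substitution your argument is complete.
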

\begin{proof} We only need to verify that \eqref{eqActionLineOpeLevyInteg} holds for each term of \eqref{eqDefiLevyIntegral}. First, from \eqref{eqDefiDetermiIntegDriftGeneral} we have $\Prob$-a.e. $\forall \, t \geq 0$, $\upsilon \in \Upsilon$:
$$\inner{S \left(\int_{0}^{t} R(s,0) \goth{m} ds \right)}{\upsilon}= \int_{0}^{t} \inner{\goth{m}}{R(s,0)' S' \upsilon} ds=\inner{\int_{0}^{t} S R(s,0) \goth{m} ds}{\upsilon}.$$ 
Then, as both $S \left(\int_{0}^{t} R(s,0) \goth{m} ds \right)$ and $\int_{0}^{t} S R(s,0) \goth{m} ds$ are regular processes with continuous paths, the above equality shows that $S \left(\int_{0}^{t} R(s,0) \goth{m} ds \right) = \int_{0}^{t} S R(s,0) \goth{m} ds$ (see Proposition 2.12 in \cite{FonsecaMora:2018}). For the stochastic integral with respect to the L\'{e}vy martingale-valued measure $M$ we have by Proposition 5.18 in \cite{FonsecaMora:2018-1} that 
$$ S \left( \int_{0}^{t} \int_{B_{\rho'}(1)}  R(s,f) M(ds,df) \right) =  \int_{0}^{t} \int_{B_{\rho'}(1)}  S R(s,f) M(ds,df). $$
Finally, by \eqref{eqDefiPoissonInteg} we easily conclude that 
$$ S \left( \int_{0}^{t} \int_{B_{\rho'}(1)^{c}} R(s,f)   N(ds,df) \right) = \int_{0}^{t} \int_{B_{\rho'}(1)^{c}} S R(s,f)   N(ds,df). $$
\end{proof}

\section{Stochastic Evolution Equations With L\'{e}vy Noise}\label{sectExisteUniqueSEE}

\subsection{Evolution Systems}\label{subSectEvoSystem}

For the reader convenience and in order to set our notation, in this section we quickly review some properties of evolution systems and $C_{0}$-semigroups of operators that we will need for our forthcoming arguments.  

A family $(U(s,t): 0 \leq s \leq t < \infty) \subseteq \mathcal{L}(\Psi,\Psi)$ is a \emph{backward evolution system} if $U(t,t)=I$, $U(s,t)=U(s,r)U(r,t)$, $0 \leq s \leq r \leq t$. It is furthermore called \emph{strongly continuous} if for every $\psi \in \Psi$, $s,t \geq 0$, the mappings $[s,\infty) \ni r \mapsto U(s,r)\psi$ and $[0,t] \ni r \mapsto U(r,t)\psi$ are continuous. 

Let $A=(A_{t}:t \geq 0)$ be a family of linear, closed operators on $\Psi$ with domains $\mbox{Dom}(A_{t})$, $t \geq 0$. We say that $A$ \emph{generates} the backward evolution system $(U(s,t): 0 \leq s \leq t < \infty)$ if there are dense subspaces $(D_{t}: t \in \R)$ of $\Psi$ (known as \emph{regularity spaces}), such that $U(s,t) D_{t} \subseteq D_{s} \subseteq \mbox{Dom}(A_{s})$ for $0 \leq s \leq t$ and if the following relations are satisfied: 
\begin{equation}\label{eqForwardEquationDeriv}
\frac{d}{dt} U(s,t)\psi = U(s,t)A(t) \psi, \quad \forall \, \psi \in D_{t}, \, s \leq t.
\end{equation}  
\begin{equation} \label{eqBackwardEquationDeriv}
\frac{d}{ds} U(s,t)\psi = -A(s)U(s,t) \psi, \quad \forall \, \psi \in D_{t}, \, s \leq t.
\end{equation}
Equations \eqref{eqForwardEquationDeriv} and  \eqref{eqBackwardEquationDeriv} are called the \emph{forward and backward equations}. From these equations we can conclude the following useful result (Lemma 1.1 in \cite{KallianpurPerezAbreu:1988}): 
\begin{equation}\label{eqForwardEquation}
U(u,t)\psi = \psi + \int_{u}^{t} \, U(u,s)A(s) \psi \, ds, \quad \forall \, \psi \in D_{t}, \, 0 \leq u \leq t.
\end{equation}  
\begin{equation} \label{eqBackwardEquation}
U(u,t)\psi = \psi + \int_{u}^{t} \, A(s) U(s,t)\psi \, ds, \quad \forall \, \psi \in D_{t}, \, 0 \leq u \leq t.
\end{equation} 
The integrals in \eqref{eqForwardEquation} and \eqref{eqBackwardEquation} are Riemann integrals in $\Psi$.  Further properties of Riemann integrals for functions with real domain and with values in locally convex spaces can be consulted in \cite{AlbaneseBonetRicker:2012, FalbJacobs:1968}.

The backward evolution system $(U(s,t): 0 \leq s \leq t < \infty)$ is called $(C_{0},1)$ if for each $T>0$ and each continuous seminorm $p$ on $\Psi$ there exists some $\vartheta_{p} \geq 0$ and a continuous seminorm $q$ on $\Psi$ such that  $ p(U(s,t)\psi) \leq e^{\vartheta_{p} (t-s)} q(\psi)$, for all $0 \leq s \leq t \leq T$ and $\psi \in \Psi$.

Observe that since $\Psi$ is reflexive, the family of dual operators $(U(t,s)': 0 \leq s \leq t < \infty) \subseteq \mathcal{L}(\Psi',\Psi')$ defines a \emph{forward evolution system},  i.e. $U(t,t)'=I$, $U(t,s)'=U(t,r)'U(r,s)'$, $0 \leq s \leq r \leq t$. Moreover, the dual family is \emph{strongly continuous}, i.e. the mapping $\{(s,t): s \leq t\} \ni (s,t) \mapsto U(s,t)' f$ is continuous for each $f \in \Psi'$. 
Examples of backwards and forward evolution systems that appear on the study of stochastic evolution equations in duals of nuclear spaces can be found in \cite{KallianpurMitoma:1992, KallianpurPerezAbreu:1988, Mitoma:1987} (see also Section \ref{subSectTimeRegu}). 

A family $( S(t): t \geq 0) \subseteq \mathcal{L}(\Psi,\Psi)$  is  a \emph{$C_{0}$-semigroup} on $\Psi$ if \begin{inparaenum}[(i)] \item $S(0)=I$, $S(t)S(s)=S(t+s)$ for all $t, s \geq 0$, and \item $\lim_{t \rightarrow s}S(t) \psi = S(s) \psi$, for all $s \geq 0$ and any $\psi \in \Psi$. \end{inparaenum} The \emph{infinitesimal generator} $A$ of  $( S(t) : t \geq 0)$ is the linear operator on $\Psi$ defined by $ A \psi = \lim_{h \downarrow 0} \frac{S(h) \psi -\psi}{h}$ (limit in $\Psi$), whenever the limit exists; the domain of $A$ being the set $\mbox{Dom}(A) \subseteq \Psi$ for which the above limit exists. Since our assumptions imply that $\Psi$ is reflexive, then the family $( S(t)' : t \geq 0)$ of  dual operators is a $C_{0}$-semigroup on $\Psi'$ with generator $A'$, that we call the \emph{dual semigroup} and the \emph{dual generator} respectively. Moreover, if $( S(t) : t \geq 0)$ is equicontinuous then $( S(t)' : t \geq 0)$ is also equicontinuous (see \cite{Komura:1968}, Theorem 1 and its Corollary).

A $C_{0}$-semigroup $( S(t) : t \geq 0)$ is called a \emph{$(C_{0},1)$-semigroup} if for each continuous seminorm $p$ on $\Psi$  there exists some $\vartheta_{p} \geq 0$ and a continuous seminorm $q$ on $\Psi$ such that  $ p(S(t)\psi) \leq e^{\vartheta_{p} t} q(\psi)$, for all $t \geq 0$ and $\psi \in \Psi$. If in the above inequality $\vartheta_{p}=\omega$ with $\omega$ a positive constant (independent of $p$) $( S(t) : t \geq 0)$ is called \emph{quasiequicontinuous}, and if $\omega=0$ $( S(t) : t \geq 0)$ is called \emph{equicontinuous}. It is worth to mention that even under our assumption that $\Psi$ is reflexive, the dual semigroup $( S(t)' : t \geq 0)$ to a $(C_{0},1)$-semigroup $( S(t) : t \geq 0)$  is not in general a $(C_{0},1)$-semigroup on $\Psi'$ (see  \cite{Babalola:1974}, Section 6). Further properties of $(C_{0},1)$-semigroup and its generator can be consulted in \cite{Babalola:1974}.

Note that if $( S(t) : t \geq 0)$ is a $C_{0}$-semigroup with generator $A$, then $U(s,t)=S(t-s)$ for $0 \leq s \leq t$ defines a backward evolution system with family of generators $A(t)=A$ and $\mbox{Dom}(A_{t})=D_{t}=\mbox{Dom}(A)$ for every $t \geq 0$. If $( S(t) : t \geq 0)$ is a $(C_{0},1)$-semigroup, then $U(s,t)=S(t-s)$ for $0 \leq s \leq t$ is $(C_{0},1)$.

\subsection{Stochastic Convolution For L\'{e}vy Processes}\label{subSectStocConvLevyProc}

The following is the main result of this section which guarantees the existence of the stochastic convolution and some of its properties. 

\begin{theorem}\label{theoExistStochConvLevy}
Let $L=(L(t): t \geq 0)$ be a $\Phi'$-valued L\'{e}vy process with L\'{e}vy-It\^{o} decomposition \eqref{eqLevyItoDecomposition}, $R \in \Lambda(L,\Phi,\Psi)$, and  $(U(s,t): 0 \leq s \leq t < \infty) \subseteq \mathcal{L}(\Psi,\Psi)$ be a $(C_{0},1)$-backward evolution system. The stochastic convolution process 
$$ X_{U,R}(t)\defeq \int_{0}^{t}\int_{\Phi'} \, U(t,s)' R(s,f) \, L(ds,df), \quad \forall \, t \geq 0. $$
is the $\Psi'$-valued regular adapted process defined by
\begin{eqnarray}
\int_{0}^{t}\int_{\Phi'} \, U(t,s)' R(s,f) \, L(ds,df)
& = & \int_{0}^{t}  U(t,s)' R(s,0) \goth{m}  ds  + \int_{0}^{t} \int_{B_{\rho'}(1)}  U(t,s)'R(s,f)  M(ds,df) \nonumber \\ 
& {} & + \int_{0}^{t} \int_{B_{\rho'}(1)^{c}} U(t,s)'R(s,f)  N(ds,df).  \label{eqDefiStochaConvLevy} 
\end{eqnarray}
\end{theorem}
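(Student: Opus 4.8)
The plan is to reduce the statement to a single membership check: for fixed $t>0$, I would show that the operator-valued map $(s,\omega,f)\mapsto U(t,s)'R(s,\omega,f)$, viewed as an integrand on $[0,t]\times\Omega\times\Phi'$, again lies in $\Lambda(L,\Phi,\Psi)$. Once this is done, the right-hand side of \eqref{eqDefiStochaConvLevy} is exactly \eqref{eqDefiLevyIntegral} applied to this integrand, and the regularity and adaptedness of $X_{U,R}(t)$ follow from the corresponding properties of the three constituent integrals recalled in Section \ref{subSectStochIntegLevy}. Throughout I use reflexivity of $\Psi$ to write, for $0\leq s\leq t$ and $\psi\in\Psi$,
\begin{equation*}
(U(t,s)'R(s,\omega,f))'\psi=R(s,\omega,f)'U(s,t)\psi,
\end{equation*}
so that both conditions of Definition \ref{defiIntegrandsLevy} have to be verified with the $s$-dependent vector $U(s,t)\psi$ in place of a fixed $\psi$.

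First I would settle the measurability requirement, Definition \ref{defiIntegrandsLevy}(1). By strong continuity of the backward evolution system the map $[0,t]\ni s\mapsto U(s,t)\psi$ is continuous, hence uniformly continuous, and can be approximated uniformly on $[0,t]$ by $\Psi$-valued simple functions $\phi_{n}(s)=\sum_{i}\ind{[\sigma_{i-1},\sigma_{i})}{s}\,U(\sigma_{i},t)\psi$. For each $n$ the map $(s,\omega,f)\mapsto R(s,\omega,f)'\phi_{n}(s)$ is a finite combination of the measurable maps furnished by Definition \ref{defiIntegrandsLevy}(1) for $R$, hence $\mathcal{P}_{t}\otimes\mathcal{B}(\Phi')/\mathcal{B}(\Phi)$-measurable; since each $R(s,\omega,f)'$ is continuous and $\phi_{n}(s)\to U(s,t)\psi$ in $\Psi$, these maps converge pointwise in $\Phi$ to $(s,\omega,f)\mapsto R(s,\omega,f)'U(s,t)\psi$, which is therefore measurable.

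I expect the main obstacle to be the integrability requirement, Definition \ref{defiIntegrandsLevy}(2): the argument $U(s,t)\psi$ depends on the integration variable $s$, so one cannot simply insert a fixed vector into the condition satisfied by $R$. I would overcome this by passing to the \emph{strong} Hilbert--Schmidt formulation of the integral. By the construction of the stochastic integral with respect to the martingale-valued measure $M$ (Section 5 in \cite{FonsecaMora:2018-1}) together with the nuclearity of $\Phi$ and $\Psi$, the hypothesis $R\in\Lambda(L,\Phi,\Psi)$ yields a continuous Hilbertian seminorm $p_{0}$ on $\Psi$ for which $R(s,\omega,f)\in\mathcal{L}_{2}(\Phi'_{\rho},\Psi'_{p_{0}})$ for $ds\otimes\Prob\otimes\nu$-a.e. $(s,\omega,f)$, with
\begin{equation*}
\Exp\int_{0}^{t}\int_{B_{\rho'}(1)}\norm{R(s,f)}_{\mathcal{L}_{2}(\Phi'_{\rho},\Psi'_{p_{0}})}^{2}\,\nu(df)\,ds<\infty,
\end{equation*}
together with the corresponding controls for the Wiener part (via $\mathcal{Q}$) and for $R(\cdot,0)\goth{m}$. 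I then apply the $(C_{0},1)$ property to the seminorm $p_{0}$: there are $\vartheta_{p_{0}}\geq 0$ and a continuous Hilbertian seminorm $q\geq p_{0}$ with $p_{0}(U(s,t)\phi)\leq e^{\vartheta_{p_{0}}(t-s)}q(\phi)$ for all $0\leq s\leq t$, so that $U(s,t)$ extends to a bounded operator $\Psi_{q}\to\Psi_{p_{0}}$ of norm at most $e^{\vartheta_{p_{0}}(t-s)}$. Dualizing gives $U(t,s)'\in\mathcal{L}(\Psi'_{p_{0}},\Psi'_{q})$ with $\norm{U(t,s)'}_{\mathcal{L}(\Psi'_{p_{0}},\Psi'_{q})}\leq e^{\vartheta_{p_{0}}(t-s)}$, and since post-composition with a bounded operator scales the Hilbert--Schmidt norm by at most the operator norm,
\begin{equation*}
\norm{U(t,s)'R(s,f)}_{\mathcal{L}_{2}(\Phi'_{\rho},\Psi'_{q})}\leq e^{\vartheta_{p_{0}}(t-s)}\,\norm{R(s,f)}_{\mathcal{L}_{2}(\Phi'_{\rho},\Psi'_{p_{0}})}.
\end{equation*}
The crucial point is that this estimate holds \emph{pointwise in $s$}; integrating it (the factor $e^{2\vartheta_{p_{0}}t}$ leaves the expectation) shows that $U(t,\cdot)'R$ satisfies the strong integrability condition for $M$, and the analogous bound gives absolute integrability of $s\mapsto U(t,s)'R(s,0)\goth{m}$ into $\Psi'_{q}$ for the drift term. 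Working with operator and Hilbert--Schmidt norms, rather than with the weak quadratic forms of Definition \ref{defiIntegrandsLevy}(2), is exactly what lets the $(C_{0},1)$ estimate act pointwise in the integration variable.

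Finally I would assemble the three integrals. Having shown $U(t,\cdot)'R\in\Lambda(L,\Phi,\Psi)$ on $[0,t]$: the drift integral $\int_{0}^{t}U(t,s)'R(s,0)\goth{m}\,ds$ exists as a $\Psi'$-valued regular continuous process by the drift construction of Section \ref{subSectStochIntegLevy}; the integral $\int_{0}^{t}\int_{B_{\rho'}(1)}U(t,s)'R(s,f)M(ds,df)$ exists as a $\Psi'$-valued regular c\`{a}dl\`{a}g martingale; and the large-jumps term is the finite random sum $\sum_{0\leq s\leq t}U(t,s)'R(s,\Delta L_{s})\Delta L_{s}\,\ind{B_{\rho'}(1)^{c}}{\Delta L_{s}}$ as in \eqref{eqDefiPoissonInteg}, which is regular, adapted and c\`{a}dl\`{a}g (post-composition with the deterministic operator $U(t,s)'$ preserving adaptedness). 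Their sum is precisely \eqref{eqDefiStochaConvLevy} and is a $\Psi'$-valued regular adapted process, as claimed.
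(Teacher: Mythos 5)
Your proposal is correct and follows essentially the same route as the paper: reduce to showing that $(s,\omega,f)\mapsto U(t,s)'R(s,\omega,f)$ lies in $\Lambda(L,\Phi,\Psi)$, and handle the $s$-dependence of $U(s,t)\psi$ by factoring $R$ through a fixed Hilbert space $\Psi'_{p}$ (via the strong integrability theory of the cited stochastic integration paper) so that the $(C_{0},1)$ estimate can be applied pointwise in $s$. The paper phrases the resulting bounds in terms of the weak quadratic forms of Definition \ref{defiIntegrandsLevy}(2) using the factorization $R=i'_{p}\tilde{R}$ rather than your operator/Hilbert--Schmidt norm composition, but this is only a cosmetic difference.
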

\begin{proof}
We define $ X_{U,R}(0)=0$. Now fix $t>0$ and define $G: [0, \infty) \times \Omega \times \Phi' \rightarrow \mathcal{L}(\Phi',\Psi')$ by $G(s, \omega, f)=\ind{[0,t]}{s} U(t,s)' R(s, \omega, f)$. We must show that $G \in \Lambda(L,\Phi,\Psi)$. 

In effect, Definition \ref{defiIntegrandsLevy}(1) and the strong continuity of the evolution system $(U(s,t): 0 \leq s \leq t < \infty)$ imply that for every $\psi \in \Psi$,   the mapping $(s,\omega,f) \mapsto R(s,\omega,f)'U(s,t)\psi$ defined on $[0,t] \times \Omega \times \Phi'$ is $\mathcal{P}_{t} \otimes \mathcal{B}(\Phi')/\mathcal{B}(\Phi)$-measurable.

Now we must show the integrability condition of Definition \ref{defiIntegrandsLevy}(2). Let $X_{s}(\omega)=R(s,\omega,0) \goth{m}$ for $(s,\omega) \in [0,t] \times \Omega$. The $\Psi'$-valued process $(X_{s}: s \in [0,t])$ is  (weakly) predictable by Definition \ref{defiIntegrandsLevy}(1). Since 
$\Exp \int_{0}^{t} \, \abs{\inner{X_{s}}{\psi}}^{2} \, ds=\Exp \int_{0}^{t} \, \abs{\inner{\goth{m}}{R(s,0)'\psi}}^{2} ds< \infty$ for each $\psi \in \Psi$,  from a mild modification of the arguments used in the proof of Lemma 6.11 in \cite{FonsecaMora:2018-1} we can show that there exists a continuous Hilbertian seminorm $\varrho$ on $\Psi$ such that 
\begin{equation}\label{eqIntegForGothM}
\Exp \int_{0}^{t} \, \varrho'(R(s,0) \goth{m})^{2} \, ds = \Exp \int_{0}^{t} \, \varrho'(X_{s})^{2} \, ds< \infty.
\end{equation} 
Now, since $(U(s,t): 0 \leq s \leq t < \infty)$ is a $(C_{0},1)$-evolution system, there exist $\vartheta_{\varrho} \geq 0$ and a  continuous seminorm $q$ on $\Psi$ such that  $ \varrho(U(s,t)\psi) \leq e^{\vartheta_{\varrho} (t-s)} q(\psi)$, for all $0 \leq s \leq t $, $\psi \in \Psi$. Then it follows from  \eqref{eqIntegForGothM} that
\begin{eqnarray}
\Exp \int_{0}^{t} \, \abs{\inner{\goth{m}}{R(s,0)'U(s,t)\psi}}^{2} \, ds 
& \leq & \Exp \int_{0}^{t} \, \varrho'(R(s,0)\goth{m})^{2}\varrho(U(s,t)\psi)^{2} \, ds \nonumber \\
& \leq & e^{2 \vartheta_{\varrho} t} q(\psi)^{2} \, \Exp \int_{0}^{t} \, \varrho'(R(s,0)\goth{m})^{2} \, ds < \infty. \label{eqFiniMomeDriftStocConv}
\end{eqnarray}

Now, since $R \in \Lambda(L,\Phi,\Psi)$, on the time interval $[0,t]$ we have that $R$ is integrable with respect to the L\'{e}vy martingale-valued measure \eqref{levyMartValuedMeasExam}. Theorem 5.11 in \cite{FonsecaMora:2018-1} shows that there exists a continuous Hilbertian seminorm $p$ on $\Psi$ and $\tilde{R}: [0,t] \times \Omega \times B_{\rho'}(1) \rightarrow \mathcal{L}(\Phi',\Psi'_{p})$ such that $R(s,\omega,f)=i'_{p} \tilde{R}(s,\omega, f)$ for $\mbox{Leb} \otimes \Prob \otimes \nu$-a.e. $(s,\omega,f) \in [0,t]\times \Omega \times B_{\rho'}(1)$ and  
\begin{equation}\label{eqSquaNormIntegForR}
\Exp \int_{0}^{t} \, \norm{\tilde{R}(s,0)}^{2}_{\mathcal{L}_{2}(\Phi'_{\mathcal{Q}},\Psi'_{p})}  ds
+\Exp \int_{0}^{t} \int_{B_{\rho'}(1)} \, p(\tilde{R}(s,f)f)^{2} \nu(df) ds< \infty.
\end{equation}
Let $\vartheta_{p} \geq 0$ and $q$ a  continuous seminorm on $\Psi$ such that  $p(U(s,t)\psi) \leq e^{\vartheta_{p} (t-s)} q(\psi)$ for all $0 \leq s \leq t$ and $\forall \psi \in \Psi$. For every $\psi \in \Psi$ we have by \eqref{eqSquaNormIntegForR} that 
\begin{eqnarray}
\Exp \int_{0}^{t} \,  \mathcal{Q}(R(s,0)' U(s,t)\psi)^{2} \, ds
& \leq & \Exp \int_{0}^{t} \,  \norm{\tilde{R}(s,0)'}^{2}_{\mathcal{L}_{2}(\Psi_{p}, \Phi_{\mathcal{Q}})} p(i_{p} U(s,t)\psi)^{2} \, ds \nonumber \\
& \leq & q(\psi)^{2} e^{2\vartheta_{p} t} \Exp \int_{0}^{t} \,  \norm{\tilde{R}(s,0)}^{2}_{\mathcal{L}_{2}(\Phi'_{\mathcal{Q}},\Psi'_{p})} \, ds < \infty. \label{eqFiniSecoMomeConvWiener}
\end{eqnarray}
Similarly, for every $\psi \in \Psi$ we have by \eqref{eqSquaNormIntegForR} that 
\begin{flalign}
& \Exp \int_{0}^{t} \,  \int_{B_{\rho'}(1)} \abs{\inner{f}{R(s,f)'U(s,t)\psi}}^{2} \nu(df)\,ds \nonumber \\
& \leq  \Exp \int_{0}^{t} \,  \int_{B_{\rho'}(1)} p'(\tilde{R}(s,f)f)^{2} \, p(i_{p}U(s,t)\psi)^{2} \nu(df)\,ds \nonumber \\
& \leq  q(\psi)^{2} e^{2\vartheta_{p} t} \, \Exp \int_{0}^{t} \,  \int_{B_{\rho'}(1)} p'(\tilde{R}(s,f)f)^{2} \nu(df)\,ds < \infty. \label{eqFiniSecoMomeConvCompPoisson}
\end{flalign}
If we collect the estimates \eqref{eqFiniMomeDriftStocConv}, \eqref{eqFiniSecoMomeConvWiener} and \eqref{eqFiniSecoMomeConvCompPoisson} we conclude that $G$ satisfies Definition \ref{defiIntegrandsLevy}(2) and hence  $G \in \Lambda(L,\Phi,\Psi)$. Then we define $X_{U,R}(t) = \int_{0}^{t}\int_{\Phi'} \, G(s,f) \, L(ds,df)$, showing the existence of the stochastic convolution process defined by  \eqref{eqDefiStochaConvLevy}. 
\end{proof}

As a direct consequence of the results in Theorem \ref{theoExistStochConvLevy} and from  \eqref{eqWeakStrongCompLevyInteg}, for any given $\psi \in \Phi$ and $t \geq 0$  we have $\Prob$-a.e.  
\begin{equation}\label{eqWeakStrongCompStochConvLevy}
\inner{\int_{0}^{t}\int_{\Phi'} \, U(t,s)' R(s,f) \, L(ds,df)}{\psi}
=\int_{0}^{t}\int_{\Phi'} \, R(s,f)' U(s,t) \psi \, L(ds,df),  
\end{equation} 
where from \eqref{eqDefiWeakLevyIntegral} it follows that   
\begin{eqnarray}
\int_{0}^{t}\int_{\Phi'} \, R(s,f)' U(s,t) \psi \, L(ds,df)
 & = & \int_{0}^{t} \inner{\goth{m}}{ R(s,f)' U(s,t) \psi} ds  \label{eqDefiWeakStochConvLevy} \\
& {} & + \int^{t}_{0} \int_{B_{\rho'}(1)}  R(s,f)' U(s,t) \psi M (ds, df) \nonumber \\
& {} & + \int_{0}^{t} \int_{B_{\rho'}(1)^{c}} \, \inner{f}{ R(s,f)' U(s,t) \psi} \,   N(ds,df). \nonumber
\end{eqnarray}

\begin{remark}\label{remaDefStochConvC01Semig}
If we have $U(s,t)=S(t-s)$ where $( S(t) : t \geq 0)$ is a $(C_{0},1)$-semigroup, following the standard practice we will use the notation $ \int_{0}^{t}\int_{\Phi'} \, S(t-s)' R(s,f) \, L(ds,df)$ and $\int_{0}^{t}\int_{\Phi'} \, R(s,f)' S(t-s) \psi \, L(ds,df)$
for the stochastic convolution (in the strong and weak sense respectively). Likewise in \eqref{eqDefiStochaConvLevy} we  replace $U(t,s)'$ with $S(t-s)'$ and in \eqref{eqDefiWeakStochConvLevy} we  replace $U(s,t)$ with $S(t-s)$. 
\end{remark}

In the next result we show that the stochastic convolution possesses almost surely finite square moments.  

\begin{proposition}\label{propASFiniMomeStochConv} 
For every $t>0$ and $\psi \in \Psi$ we have $\Prob$-a.e.
\begin{equation}\label{eqStochConvASSecondMoment}
\int_{0}^{t} \, \left( \inner{\int_{0}^{s}\int_{\Phi'} \, U(s,r)' R(r,f) \, L(dr,df)}{\psi} \right)^{2} \, ds  < \infty.  
\end{equation}
Assume moreover that for every $t>0$ and $\psi \in D_{t}$, the mapping $s \mapsto A(s) \psi$ is continuous on $[0,t]$. Then for every $t>0$ and $\psi \in D_{t}$ we have $\Prob$-a.e.
\begin{equation}\label{eqStochConvASSecondMomentFamilyAs}
\int_{0}^{t} \, \left( \inner{\int_{0}^{s}\int_{\Phi'} \, U(s,r)' R(r,f) \, L(dr,df)}{ A(s) \psi} \right)^{2} \, ds  < \infty.  
\end{equation}
\end{proposition}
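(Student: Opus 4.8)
The plan is to reduce both assertions to the weak form of the stochastic convolution and then to bound, uniformly in $s\in[0,t]$, the three contributions coming from the L\'evy--It\^o decomposition. Fix $t>0$ and $\psi\in\Psi$. By the weak-strong compatibility \eqref{eqWeakStrongCompStochConvLevy}--\eqref{eqDefiWeakStochConvLevy}, for each fixed $s\in[0,t]$ the random variable $\inner{X_{U,R}(s)}{\psi}$ agrees $\Prob$-a.e. with the sum of a drift term $I_1(s)$, a martingale term $I_2(s)$ (the integral against $M$ over $B_{\rho'}(1)$) and a large-jump term $I_3(s)$ (the Poisson integral over $B_{\rho'}(1)^{c}$). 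Each of these three processes is jointly measurable in $(s,\omega)$, so the identity can be read $\mathrm{Leb}\otimes\Prob$-a.e. on $[0,t]\times\Omega$, and by Fubini the integral in \eqref{eqStochConvASSecondMoment} coincides $\Prob$-a.e. with $\int_0^t (I_1(s)+I_2(s)+I_3(s))^2\,ds$. Since $(a+b+c)^2\le 3(a^2+b^2+c^2)$, it suffices to control $\int_0^t I_j(s)^2\,ds$ for $j=1,2,3$.

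I would treat the drift and martingale terms by taking expectations and using Tonelli, so that it is enough to bound $\Exp\, I_1(s)^2$ and $\Exp\, I_2(s)^2$ uniformly on $[0,t]$. The crucial point — and what I expect to be the main obstacle — is that a single choice of auxiliary data works for every intermediate final time $s\le t$: namely the seminorm $\varrho$ from \eqref{eqIntegForGothM} together with the seminorm $p$ and the factorization $R=i_p'\tilde{R}$ from \eqref{eqSquaNormIntegForR}, produced in the proof of Theorem \ref{theoExistStochConvLevy} for the top time $t$, serve simultaneously for all $s\le t$ because the corresponding integrability bounds on $[0,t]$ restrict to $[0,s]$. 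Using the $(C_{0},1)$-estimates $\varrho(U(r,s)\psi)\le e^{\vartheta_{\varrho}(s-r)}q(\psi)$ and $p(U(r,s)\psi)\le e^{\vartheta_{p}(s-r)}q(\psi)$ exactly as in \eqref{eqFiniMomeDriftStocConv}, \eqref{eqFiniSecoMomeConvWiener} and \eqref{eqFiniSecoMomeConvCompPoisson}, combined with the Cauchy--Schwarz inequality for $I_1$ and the It\^o isometry for the martingale-valued measure $M$ for $I_2$, yields bounds of the form $\Exp\, I_1(s)^2\le t\,e^{2\vartheta_{\varrho}t}q(\psi)^2 C_1$ and $\Exp\, I_2(s)^2\le e^{2\vartheta_{p}t}q(\psi)^2(C_2+C_3)$, with constants $C_1,C_2,C_3<\infty$ independent of $s$. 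Integrating in $s$ over $[0,t]$ gives $\int_0^t (I_1(s)^2+I_2(s)^2)\,ds<\infty$ $\Prob$-a.e.

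For the large-jump term I would argue pathwise. On $[0,t]$ the process $L$ has only finitely many jumps outside $B_{\rho'}(1)$, say at times $\tau_1<\dots<\tau_K$ with $K=N(t,B_{\rho'}(1)^{c})<\infty$ a.s., so that $I_3(s)=\sum_{k:\,\tau_k\le s}\inner{\Delta L_{\tau_k}}{R(\tau_k,\Delta L_{\tau_k})'U(\tau_k,s)\psi}$ is a finite sum. For fixed $\omega$ and each $k$, the strong continuity of the backward evolution system makes $s\mapsto U(\tau_k,s)\psi$ continuous on $[\tau_k,t]$, hence $s\mapsto I_3(s)$ is bounded on $[0,t]$ and $\int_0^t I_3(s)^2\,ds<\infty$. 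Combining the three estimates establishes \eqref{eqStochConvASSecondMoment}.

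Finally, the second assertion follows by repeating the argument verbatim with the fixed test vector $\psi$ replaced by the $s$-dependent vector $A(s)\psi$. The continuity of $s\mapsto A(s)\psi$ on the compact interval $[0,t]$ guarantees $M_q\defeq\sup_{s\in[0,t]}q(A(s)\psi)<\infty$ for every continuous seminorm $q$, so the bounds above hold with $q(\psi)$ replaced throughout by $M_q$ and remain uniform in $s$. The one genuinely new point concerns the large-jump term $I_3(s)=\sum_{k:\,\tau_k\le s}\inner{\Delta L_{\tau_k}}{R(\tau_k,\Delta L_{\tau_k})'U(\tau_k,s)A(s)\psi}$: I would show that $s\mapsto U(\tau_k,s)A(s)\psi$ is continuous into $\Psi$ on $[\tau_k,t]$ by writing $U(\tau_k,s)A(s)\psi-U(\tau_k,s_0)A(s_0)\psi=U(\tau_k,s)[A(s)\psi-A(s_0)\psi]+[U(\tau_k,s)-U(\tau_k,s_0)]A(s_0)\psi$, bounding the first summand with the $(C_{0},1)$-estimate (which converts $A(s)\psi\to A(s_0)\psi$ into convergence after applying $U(\tau_k,s)$) and the second by strong continuity. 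Each summand of $I_3(s)$ is then bounded on $[\tau_k,t]$, and \eqref{eqStochConvASSecondMomentFamilyAs} follows.
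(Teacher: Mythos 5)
Your argument is correct and follows essentially the same route as the paper's proof: reduce to the weak form via \eqref{eqWeakStrongCompStochConvLevy}, bound the drift and martingale contributions in expectation uniformly in $s$ using the seminorms and estimates \eqref{eqFiniMomeDriftStocConv}, \eqref{eqFiniSecoMomeConvWiener}, \eqref{eqFiniSecoMomeConvCompPoisson} together with the It\^{o} isometry, handle the large-jump term pathwise as a finite sum, and obtain \eqref{eqStochConvASSecondMomentFamilyAs} by replacing $\psi$ with $A(s)\psi$ and using $\sup_{0\leq s\leq t} q(A(s)\psi)<\infty$. The only cosmetic differences are your use of Cauchy--Schwarz in place of the paper's Jensen inequality for the drift term and your (unneeded but harmless) extra continuity argument for $s\mapsto U(\tau_k,s)A(s)\psi$, where boundedness already suffices.
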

\begin{proof} Let $t>0$ and $\psi \in \Psi$. 
From \eqref{eqWeakStrongCompStochConvLevy}  it suffices to verify that each term in the right-hand side of \eqref{eqDefiWeakStochConvLevy} has $\Prob$-a.e. a finite second moment with respect to the Lebesgue measure on $[0,t]$.  

From Jensen's inequality, \eqref{eqFiniMomeDriftStocConv}, and Fubini's theorem, it follows that 
\begin{eqnarray*}
\Exp \int_{0}^{t} \left( \int_{0}^{s} \, \inner{\goth{m}}{R(r,0)'U(r,s)\psi} \, dr \right)^{2} ds 
& \leq & \Exp \int_{0}^{t} \left( s \int_{0}^{s} \, \abs{\inner{\goth{m}}{R(r,0)'U(r,s)\psi}}^{2} \, dr \right)  ds \\ 
& \leq & q(\psi)^{2} \Exp \int_{0}^{t} s e^{2 \vartheta_{\varrho} s}  \left( \int_{0}^{s} \, \varrho'(R(r,0)\goth{m})^{2}  \, dr \right) ds \\ 
& \leq &  q(\psi)^{2} \left( \Exp \int_{0}^{t} \, \varrho'(R(r,0)\goth{m})^{2} \, dr \right) \left( \int_{0}^{t} se^{2 \vartheta_{\varrho} s} ds \right).  
\end{eqnarray*} 
Hence we conclude that the first term in the right-hand side of \eqref{eqDefiWeakStochConvLevy} has $\Prob$-a.e. a finite square moment.

Now from  the It\^{o} isometry (Theorem 4.7 in \cite{FonsecaMora:2018-1}), \eqref{eqSquaNormIntegForR},  \eqref{eqFiniSecoMomeConvWiener} and \eqref{eqFiniSecoMomeConvCompPoisson}, we have for every $s \in [0,t]$,
\begin{flalign*}
& \Exp \left(\int^{s}_{0} \int_{B_{\rho'}(1)}  R(r,f)' U(r,s) \psi M (dr, df)  \right)^{2} \\
& =  \Exp \int_{0}^{s} \,  \mathcal{Q}(R(r,0)' U(r,s)\psi)^{2} \, dr
+ \Exp \int_{0}^{s} \,  \int_{B_{\rho'}(1)} \abs{\inner{f}{R(r,f)'U(r,s)\psi}}^{2} \nu(df)\,dr \\
& \leq q(\psi)^{2} e^{2\vartheta_{p} s}  \left( \Exp \int_{0}^{t} \, \norm{\tilde{R}(r,0)}^{2}_{\mathcal{L}_{2}(\Phi'_{\mathcal{Q}},\Psi'_{p})}  dr
+\Exp \int_{0}^{t} \int_{B_{\rho'}(1)} \, p(\tilde{R}(r,f)f)^{2} \nu(df) dr \right).
\end{flalign*}
The above inequality immediately implies that 
$$ \Exp \int_{0}^{t} \left(\int^{s}_{0} \int_{B_{\rho'}(1)}  R(r,f)' U(r,s) \psi M (dr, df)  \right)^{2} ds < \infty. $$
Thus the second term in the right-hand side of \eqref{eqDefiWeakStochConvLevy} has $\Prob$-a.e. a finite second moment.

Finally, since for every $\omega \in \Omega$ the  integral $\int_{0}^{s} \int_{B_{\rho'}(1)^{c}} \, \inner{f}{ R(r,f)' U(r,s) \psi} \,   N(dr,df)$ is defined as a finite sum, it is immediate that we have 
$$\int_{0}^{t} \abs{ \int_{0}^{s} \int_{B_{\rho'}(1)^{c}} \, \inner{f}{ R(r,f)' U(r,s) \psi}^{2} \,   N(dr,df) } ds < \infty. $$
Therefore the third term in the right-hand side of \eqref{eqDefiWeakStochConvLevy} has $\Prob$-a.e. a finite second moment. We have shown that  \eqref{eqStochConvASSecondMoment} holds. 

The proof of \eqref{eqStochConvASSecondMomentFamilyAs} can be carried out from similar arguments to those used above for \eqref{eqStochConvASSecondMoment} replacing $\psi$ by $A(s)\psi$ and using the fact that the hypothesis of continuity on $[0,t]$ of the mapping $s \mapsto A(s) \psi$ implies that $\sup_{0 \leq s \leq t} q(A(s)\psi)<\infty$  for every continuous seminorm $q$ on $\Psi$.  
\end{proof}

\subsection{Existence and Uniqueness of Solutions}


In this section we show the existence and uniqueness of solutions to the following class of (time-inhomogeneous) stochastic evolution equations, 
\begin{equation}\label{eqSEELevy}
d Y_{t}=A(t)' Y_{t}dt +\int_{\Phi'} R(t,f) L(dt,df), \quad t \geq 0, 
\end{equation}
with initial condition $Y_{0}=\eta$ $\Prob$-a.e., where we will assume the following:

\begin{assumption}\label{assuSEELevy}
\begin{enumerate}[label=(A\arabic*)]
\item $\eta$ is a $\mathcal{F}_{0}$-measurable $\Phi'$-valued regular random variable.
\item $(U(s,t): 0 \leq s \leq t < \infty) \subseteq \mathcal{L}(\Psi,\Psi)$ is a $(C_{0},1)$ backward evolution system with family of generators $(A(t): t \geq 0)$ with corresponding regularity spaces $(D_{t}: t \geq 0)$. We will make the following additional assumptions: 
\begin{enumerate}
\item For every $t>0$ and $\psi \in D_{t}$, the mapping $s \mapsto A(s) \psi$ is continuous on $[0,t]$. 
\item For every $0 \leq s < t$ and  $\psi \in D_{t}$ we have $A(s) U(s,t) \psi \in D_{s}$. 
\end{enumerate}
\item $L=(L(t): t \geq 0)$ is a $\Phi'$-valued L\'{e}vy process with L\'{e}vy-It\^{o} decomposition \eqref{eqLevyItoDecomposition}.
\item $R \in \Lambda(L,\Phi,\Psi)$. 
\end{enumerate}
\end{assumption}

\begin{remark} The conditions (a),(b) in Assumption \ref{assuSEELevy}(A2) are fulfilled in the following two important cases: 
\begin{enumerate}
\item Suppose that $U(s,t)=S(t-s)$, $0 \leq s \leq t$, for a $(C_{0},1)$-semigroup $(S(t):t\geq 0)$. Since $A(t)=A$ and $\mbox{Dom}(A_{t})=D_{t}=\mbox{Dom}(A)$ for every $t \geq 0$ we immediately get Assumption \ref{assuSEELevy}(A2)(a). Moreover, basic properties of $C_{0}$-semigroups 
(see e.g.  \cite{Komura:1968}) shows that $A(s)U(s,t)\psi=AS(t-s)\psi=S(t-s)A\psi \in \mbox{Dom}(A)$ for every $\psi \in \Psi$, which shows Assumption \ref{assuSEELevy}(A2)(b).
\item Suppose that $A(t) \in \mathcal{L}(\Psi,\Psi)$ for each $t \geq 0$ and the mapping $t \mapsto A(t) \psi$ is continuous from $[0,\infty)$ into $\Psi$ for every $\psi \in \Psi$. The latter assumption is stronger than Assumption \ref{assuSEELevy}(A2)(a). From the former assumption we have $A(s) U(s,t) \psi \in D_{s}=\mbox{Dom}(A(s))=\Psi$ for every $0 \leq s < t$ and  $\psi \in \Psi$. So we obtain Assumption \ref{assuSEELevy}(A2)(b).
\end{enumerate}
\end{remark}

In this work we are interested in the existence and uniqueness of weak solutions. 

\begin{definition}
A $\Psi'$-valued regular adapted process $Y=(Y_{t}: t \geq 0)$ is called a \emph{weak solution} to \eqref{eqSEELevy} if for any given $t \geq 0$, for each $\psi \in D_{t}$ we have $\int_{0}^{t} \abs{\inner{Y_{s}}{A(s)\psi}} ds< \infty$ $\Prob$-a.e. and  
\begin{equation}\label{eqDefWeakSoluSEELevy}
\inner{Y_{t}}{\psi}=\inner{\eta}{\psi} +\int_{0}^{t} \inner{Y_{s}}{A(s)\psi} ds+\int_{0}^{t} \int_{\Phi'} \, R(s,f)' \psi \,L(ds,df). 
\end{equation}
\end{definition}

We start by considering the existence of a weak solution to \eqref{eqSEELevy}. 

\begin{theorem}\label{theoExistSoluSEELevy} The stochastic evolution equation \eqref{eqSEELevy} has a weak solution given by the mild (or evolution) solution 
\begin{equation}\label{eqDefMildSoluSEELevy}
X_{t}=U(t,0)'\eta +\int_{0}^{t}\int_{\Phi'} \, U(t,s)' R(s,f) \, L(ds,df), \quad \forall \, t \geq 0. 
\end{equation}
\end{theorem}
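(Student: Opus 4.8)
The plan is to verify directly that the mild solution $X=(X_{t})$ from \eqref{eqDefMildSoluSEELevy} satisfies the three requirements of the definition of a weak solution: that it is a $\Psi'$-valued regular adapted process, that $\int_{0}^{t}\abs{\inner{X_{s}}{A(s)\psi}}\,ds<\infty$ $\Prob$-a.e. for each $\psi\in D_{t}$, and that the identity \eqref{eqDefWeakSoluSEELevy} holds. The first point is immediate, since the stochastic convolution is regular adapted by Theorem \ref{theoExistStochConvLevy}, while $t\mapsto U(t,0)'\eta$ is regular adapted because $\eta$ is $\mathcal{F}_{0}$-measurable and regular and $U(t,0)'$ is a continuous linear operator. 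For the integrability point I would split $X_{s}$ into its deterministic part $U(s,0)'\eta$ and its stochastic convolution part $Z_{s}$. For the former, $\inner{U(s,0)'\eta}{A(s)\psi}=\inner{\eta}{U(0,s)A(s)\psi}$ is bounded on $[0,t]$ by the strong continuity of the evolution system together with the continuity of $s\mapsto A(s)\psi$ from Assumption \ref{assuSEELevy}(A2)(a); for the latter, finiteness of $\int_{0}^{t}\abs{\inner{Z_{s}}{A(s)\psi}}\,ds$ follows from \eqref{eqStochConvASSecondMomentFamilyAs} in Proposition \ref{propASFiniMomeStochConv} and the Cauchy--Schwarz inequality.

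The heart of the argument is to establish \eqref{eqDefWeakSoluSEELevy}, which by linearity I would verify separately on the two parts of $X_{t}$. For the deterministic part I would pair with $\psi\in D_{t}$, write $\inner{U(t,0)'\eta}{\psi}=\inner{\eta}{U(0,t)\psi}$, and apply the forward equation \eqref{eqForwardEquation} with $u=0$, namely $U(0,t)\psi=\psi+\int_{0}^{t}U(0,s)A(s)\psi\,ds$. Bringing the continuous linear functional $\eta$ inside the (Riemann) integral then gives $\inner{U(t,0)'\eta}{\psi}=\inner{\eta}{\psi}+\int_{0}^{t}\inner{U(s,0)'\eta}{A(s)\psi}\,ds$, which is exactly the deterministic contribution to \eqref{eqDefWeakSoluSEELevy}.

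For the stochastic part I would use the weak--strong compatibility \eqref{eqWeakStrongCompStochConvLevy} to write $\inner{Z_{t}}{\psi}=\int_{0}^{t}\int_{\Phi'}R(s,f)'U(s,t)\psi\,L(ds,df)$ and, applied at time $s$ with test element $A(s)\psi$, $\inner{Z_{s}}{A(s)\psi}=\int_{0}^{s}\int_{\Phi'}R(r,f)'U(r,s)A(s)\psi\,L(dr,df)$. The key step is a stochastic Fubini theorem applied to
\[
\int_{0}^{t}\inner{Z_{s}}{A(s)\psi}\,ds=\int_{0}^{t}\left(\int_{0}^{s}\int_{\Phi'}R(r,f)'U(r,s)A(s)\psi\,L(dr,df)\right)ds,
\]
interchanging the $ds$-integral with the stochastic integral to obtain $\int_{0}^{t}\int_{\Phi'}\left(\int_{r}^{t}R(r,f)'U(r,s)A(s)\psi\,ds\right)L(dr,df)$. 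Pulling the continuous operator $R(r,f)'$ out of the inner Riemann integral and invoking the forward equation \eqref{eqForwardEquation} with $u=r$, the inner integral equals $R(r,f)'\bigl(U(r,t)\psi-\psi\bigr)$. By linearity of the weak stochastic integral this yields $\int_{0}^{t}\inner{Z_{s}}{A(s)\psi}\,ds=\inner{Z_{t}}{\psi}-\int_{0}^{t}\int_{\Phi'}R(r,f)'\psi\,L(dr,df)$, which rearranges into the stochastic contribution to \eqref{eqDefWeakSoluSEELevy}; adding the two contributions completes the proof.

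The main obstacle is the stochastic Fubini interchange, which I would carry out term by term along the L\'{e}vy--It\^{o} decomposition \eqref{eqLevyItoDecomposition} used to define the weak integral in \eqref{eqDefiWeakLevyIntegral}: for the drift term it is the classical (deterministic) Fubini theorem; for the Wiener and compensated Poisson terms it is a stochastic Fubini theorem for the martingale-valued measure $M$ of \eqref{levyMartValuedMeasExam}, whose applicability rests on the square-integrability estimates \eqref{eqSquaNormIntegForR} together with the $(C_{0},1)$ bound on $U$ and the integration theory of \cite{FonsecaMora:2018-1}; and for the large-jumps term the integral is the finite random sum \eqref{eqDefiPoissonInteg}, so the interchange reduces to Fubini for a finite sum against Lebesgue measure. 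Throughout, the joint measurability and integrability of the integrand $(r,s,f)\mapsto R(r,f)'U(r,s)A(s)\psi$ needed to legitimize these interchanges are supplied by the strong continuity of the evolution system, the continuity of $s\mapsto A(s)\psi$ in Assumption \ref{assuSEELevy}(A2)(a), and the membership $R\in\Lambda(L,\Phi,\Psi)$.
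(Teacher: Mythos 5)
Your proposal is correct and follows essentially the same route as the paper: the paper likewise splits $X_{t}$ into $U(t,0)'\eta$ plus the stochastic convolution, handles the deterministic part via the forward equation \eqref{eqForwardEquation}, and establishes the stochastic part through exactly the iterated-integration identity you describe (isolated in the paper as Lemma \ref{lemmFubiniStochConvLevy} and Corollary \ref{coroEqFubiniStochConvoLevyStrong}), proved term by term along the L\'{e}vy--It\^{o} decomposition with deterministic Fubini for the drift, the stochastic Fubini theorem of \cite{FonsecaMora:2018-1} for the martingale-valued measure, and a finite-sum interchange for the large jumps. The integrability and regularity checks you give at the outset also coincide with the paper's.
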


The key step in the proof of Theorem \ref{theoExistSoluSEELevy} is the following result on iterated integration for the stochastic convolution. 

\begin{lemma}\label{lemmFubiniStochConvLevy} Given $t>0$, for every $\psi \in D_{t}$, we have $\Prob$-a.e.
\begin{flalign} 
& \int_{0}^{t} \left( \int_{0}^{s}\int_{\Phi'} \,  R(r,f)' U(r,s) A(s) \psi \, L(dr,df) \right)ds \nonumber \\ 
& = \int_{0}^{t} \int_{\Phi'} \,  R(r,f)'U(r,t) \psi \, L(dr,df) -\int_{0}^{t} \int_{\Phi'} \, R(r,f)' \psi  \,L(dr,df) \nonumber \\
& = \int_{0}^{t} \left( \int_{0}^{s}\int_{\Phi'} \,  R(r,f)' A(s) U(s,t)  \psi \, L(dr,df) \right)ds. \label{eqFubiniStochConvoLevyWeak} 
\end{flalign}
\end{lemma}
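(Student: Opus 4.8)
The plan is to pass to the real-valued weak integrals via \eqref{eqWeakStrongCompStochConvLevy}, to recognise that the middle term of \eqref{eqFubiniStochConvoLevyWeak} is the weak integral of $R$ tested against the $r$-dependent vector $U(r,t)\psi-\psi$, and then to obtain the two outer equalities from a single stochastic Fubini theorem, feeding in the forward equation \eqref{eqForwardEquation} for the first equality and the backward equation \eqref{eqBackwardEquation} for the second. Fix $t>0$ and $\psi\in D_{t}$. The first summand of the middle term is the weak stochastic convolution, which by \eqref{eqWeakStrongCompStochConvLevy}--\eqref{eqDefiWeakStochConvLevy} is well defined, and the second summand is the weak integral \eqref{eqDefiWeakLevyIntegral} of $R$ against the fixed vector $\psi$; since each of the three components of \eqref{eqDefiWeakLevyIntegral} is additive in its integrand, the middle term equals the weak integral of $R$ against $U(r,t)\psi-\psi$.

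First I would insert the evolution equations. Because $\Psi$ is reflexive, $R(r,f)'\in\mathcal{L}(\Psi,\Phi)$, so $R(r,f)'$ commutes with the $\Psi$-valued Riemann integrals in \eqref{eqForwardEquation} and \eqref{eqBackwardEquation} (continuous linear maps pass through Riemann integrals in locally convex spaces, see \cite{AlbaneseBonetRicker:2012, FalbJacobs:1968}), giving
\[ R(r,f)'\big(U(r,t)\psi-\psi\big)=\int_{r}^{t}R(r,f)'U(r,s)A(s)\psi\,ds=\int_{r}^{t}R(r,f)'A(s)U(s,t)\psi\,ds. \]
Substituting the two representations reduces both required equalities to the single assertion that, for the jointly measurable integrand $\Xi(r,s,f)$ equal to $R(r,f)'U(r,s)A(s)\psi$ (resp. $R(r,f)'A(s)U(s,t)\psi$) and supported on $\{0\le r\le s\le t\}$,
\[ \int_{0}^{t}\int_{\Phi'}\Big(\int_{r}^{t}\Xi(r,s,f)\,ds\Big)\,L(dr,df)=\int_{0}^{t}\Big(\int_{0}^{s}\int_{\Phi'}\Xi(r,s,f)\,L(dr,df)\Big)\,ds. \]
Joint measurability of $\Xi$ follows from Definition \ref{defiIntegrandsLevy}(1), the strong continuity of $(U(s,t))$, and the continuity of $s\mapsto A(s)\psi$ from Assumption \ref{assuSEELevy}(A2)(a); the $\Prob$-a.s. finiteness of the outer $ds$-integral on the right is exactly \eqref{eqStochConvASSecondMomentFamilyAs} of Proposition \ref{propASFiniMomeStochConv} in the first case and follows by the same estimates in the second.

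The stochastic Fubini identity I would prove componentwise along \eqref{eqDefiWeakLevyIntegral}. For the drift term $\int_{0}^{t}\inner{\goth{m}}{R(r,0)'(\cdot)}\,dr$ the interchange is the classical Fubini--Tonelli theorem, whose absolute-integrability hypothesis over $\{0\le r\le s\le t\}$ follows from the $(C_{0},1)$-bound on $U$, the bound $\sup_{0\le s\le t}q(A(s)\psi)<\infty$ (a consequence of the continuity of $s\mapsto A(s)\psi$), the estimate \eqref{eqIntegForGothM}, and the Cauchy--Schwarz inequality. For the Poisson term on $B_{\rho'}(1)^{c}$ the integral is, by \eqref{eqDefiWeakPoissonInteg}, a $\Prob$-a.s. finite sum over the jump times $r\in[0,t]$ with $\Delta L_{r}\in B_{\rho'}(1)^{c}$, so the interchange of $\int_{0}^{t}(\cdot)\,ds$ with this finite sum is elementary; each summand reduces to $\int_{r}^{t}\inner{\Delta L_{r}}{R(r,\Delta L_{r})'U(r,s)A(s)\psi}\,ds$, which is finite by the $(C_{0},1)$-bound (resp. finite for the backward integrand by the Riemann integrability granted in \eqref{eqBackwardEquation}), and the forward (resp. backward) equation collapses it to $\inner{\Delta L_{r}}{R(r,\Delta L_{r})'(U(r,t)\psi-\psi)}\ind{B_{\rho'}(1)^{c}}{\Delta L_{r}}$.

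The main obstacle is the martingale-valued-measure term $\int_{0}^{t}\int_{B_{\rho'}(1)}R(r,f)'(\cdot)\,M(dr,df)$, which genuinely requires a stochastic Fubini theorem because $\Xi$ depends on the Lebesgue variable $s$ as well as on the integration variable $r$. I would argue by Riemann sums: writing $Z_{s}\defeq\int_{0}^{s}\int_{B_{\rho'}(1)}R(r,f)'\Xi(r,s,f)\,M(dr,df)$ and using the additivity of the stochastic integral in its integrand, each Riemann sum $\sum_{j}Z_{s_{j}}(s_{j+1}-s_{j})$ equals the single stochastic integral against $M$ of $r\mapsto R(r,f)'\sum_{j}(s_{j+1}-s_{j})U(r,s_{j})A(s_{j})\psi\,\ind{[0,s_{j}]}{r}$. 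As the mesh tends to zero this integrand converges, in the norm under which the It\^{o} isometry (Theorem 4.7 in \cite{FonsecaMora:2018-1}) holds, to $r\mapsto R(r,f)'\int_{r}^{t}U(r,s)A(s)\psi\,ds$, while $\sum_{j}Z_{s_{j}}(s_{j+1}-s_{j})\to\int_{0}^{t}Z_{s}\,ds$ in $L^{2}(\Omega)$; both convergences are controlled by the square-integrability bounds \eqref{eqSquaNormIntegForR}, \eqref{eqFiniSecoMomeConvWiener} and \eqref{eqFiniSecoMomeConvCompPoisson}, which bound $\Exp(Z_{s})^{2}$ uniformly in $s\in[0,t]$ (after replacing $\psi$ by $A(s)\psi$ and using $\sup_{0\le s\le t}q(A(s)\psi)<\infty$). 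Passing to the limit yields the martingale-measure component of the stochastic Fubini identity, and combining the three components establishes both equalities in \eqref{eqFubiniStochConvoLevyWeak}.
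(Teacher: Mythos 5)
Your proposal is correct and follows essentially the same route as the paper: decompose the weak integral into its drift, martingale-valued-measure and large-jump components, apply classical Fubini to the first and third (the third being a finite sum), a stochastic Fubini theorem to the second, and then collapse the inner $ds$-integral via the forward equation \eqref{eqForwardEquation} for the first equality and the backward equation \eqref{eqBackwardEquation} for the second. The only difference is that where you sketch a Riemann-sum proof of the stochastic Fubini identity for the $M(dr,df)$ term, the paper simply invokes Theorem 4.24 in \cite{FonsecaMora:2018-1}, so that part of your argument, while sound, re-derives an available citation.
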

\begin{proof}
Let $t>0$ and consider any $\psi \in D_{t}$. We start by showing the equality of the first two lines in \eqref{eqFubiniStochConvoLevyWeak}. It suffices to check that the equality holds for each term in \eqref{eqDefiWeakStochConvLevy}. 

For the first term in \eqref{eqDefiWeakStochConvLevy}, observe that from Fubini's theorem and the forward equation \eqref{eqForwardEquation}, $\omega$-wise we have that
\begin{flalign*}
& \int_{0}^{t} \left( \int_{0}^{s} \inner{\goth{m}}{R(r,0)'U(r,s)A(s) \psi}  \, dr \right)ds  \\
& =  \int_{0}^{t} \left( \int_{r}^{t} \inner{R(r,0) \goth{m}}{U(r,s)A(s) \psi}  \, ds \right)dr \\  
& =  \int_{0}^{t} \inner{R(r,0) \goth{m}}{U(r,t)\psi -\psi}   \, dr \\  
& =  \int_{0}^{t} \inner{\goth{m}}{R(r,0)'U(r,t)\psi} \, dr -\int_{0}^{t} \inner{\goth{m}}{R(r,0)'\psi} \, dr.  
\end{flalign*}   

The second term in \eqref{eqDefiWeakStochConvLevy}, corresponds to the (weak) stochastic integral with respect to the L\'{e}vy martingale-valued measure $M$ of  \eqref{levyMartValuedMeasExam}. From the stochastic Fubini theorem (Theorem 4.24 in \cite{FonsecaMora:2018-1}) and the forward equation \eqref{eqForwardEquation} it follows that $\Prob$-a.e.
\begin{flalign*}
& \int_{0}^{t} \left( \int^{s}_{0} \int_{B_{\rho'}(1)} R(r,f)' U(r,s) A(s) \psi M (dr, df) \right) ds \\
& = \int_{0}^{t} \int_{B_{\rho'}(1)} \left( \int^{t}_{r}  R(r,f)' U(r,s) A(s) \psi \, ds  \right) M (dr, df) \\
& = \int_{0}^{t} \int_{B_{\rho'}(1)}  R(r,f)' U(r,t) \psi - R(r,f)' \psi \, M (dr, df) \\
& =  \int^{t}_{0} \int_{B_{\rho'}(1)} R(r,f)' U(r,t) \psi M (dr, df) - \int^{t}_{0} \int_{B_{\rho'}(1)} R(r,f)' \psi  M (dr, df). 
\end{flalign*}   

For the third term in \eqref{eqDefiWeakStochConvLevy}, from \eqref{eqDefiWeakPoissonInteg}, Fubini's theorem and the forward equation \eqref{eqForwardEquation}, $\omega$-wise we have that 
\begin{flalign*}
& \int_{0}^{t} \left( \int_{0}^{s} \int_{B_{\rho'}(1)^{c}} \, \inner{f}{R(r,f)' U(r,s) A(s) \psi} \,   N(dr,df) \right) ds \\
& =  \int_{0}^{t} \left( \sum_{0 \leq r \leq s} \inner{\Delta L_{r}}{R(r,\Delta L_{r})' U(r,s)A(s) \psi}  \ind{B_{\rho'}(1)^{c}}{\Delta L_{r}} \right)ds \\
& =  \sum_{0 \leq r \leq t} \left( \int_{r}^{t} \inner{R(r,\Delta L_{r}) \Delta L_{r}}{U(r,s)A(s) \psi} ds \right) \ind{B_{\rho'}(1)^{c}}{\Delta L_{r}}  \\  
& =  \sum_{0 \leq r \leq t} \left( \inner{R(r,\Delta L_{r}) \Delta L_{r}}{U(r,t)\psi- \psi} \right) \ind{B_{\rho'}(1)^{c}}{\Delta L_{r}}  \\  
& =  \int_{0}^{t} \int_{B_{\rho'}(1)^{c}} \inner{f}{R(r,f)' U(r,t) \psi}  N(dr,df) -\int_{0}^{t} \int_{B_{\rho'}(1)^{c}} \inner{f}{R(r,f)'\psi}  N(dr,df).  
\end{flalign*}   
The equality of the second and third lines  in \eqref{eqFubiniStochConvoLevyWeak} can be proved   following completely analogous arguments to those used in the above paragraphs but this time using the backward  equation \eqref{eqBackwardEquation}. Details are leaved to the reader. 
\end{proof}

\begin{corollary}\label{coroEqFubiniStochConvoLevyStrong}
Given $t>0$, for every $\psi \in D_{t}$, we have $\Prob$-a.e.
\begin{multline} \label{eqFubiniStochConvoLevyStrong}
\int_{0}^{t} \left( \inner{\int_{0}^{s}\int_{\Phi'} \, U(t,r)' R(r,f) \, L(dr,df)}{A(s) \psi} \right) ds \\
 = \inner{\int_{0}^{t}\int_{\Phi'} \, U(t,r)' R(r,f) \, L(dr,df)}{\psi} -\inner{\int_{0}^{t} \int_{\Phi'} \, R(r,f) \,L(dr,df)}{\psi}, 
\end{multline}
\end{corollary}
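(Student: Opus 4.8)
The plan is to obtain \eqref{eqFubiniStochConvoLevyStrong} by translating, term by term, the weak identity \eqref{eqFubiniStochConvoLevyWeak} of Lemma \ref{lemmFubiniStochConvLevy} through the weak--strong compatibility relations \eqref{eqWeakStrongCompLevyInteg} and \eqref{eqWeakStrongCompStochConvLevy}. Each pairing appearing in \eqref{eqFubiniStochConvoLevyStrong} is the pairing of a $\Psi'$-valued strong (convolution) integral with a fixed element of $\Psi$, and each such pairing is, by the relevant compatibility identity, $\Prob$-a.e. equal to the corresponding real-valued weak integral already handled in the lemma. Since \eqref{eqFubiniStochConvoLevyWeak} holds $\Prob$-a.e., it then suffices to match both sides.

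First I would treat the right-hand side of \eqref{eqFubiniStochConvoLevyStrong}. Applying the weak--strong compatibility \eqref{eqWeakStrongCompStochConvLevy} of the convolution at the fixed final time $t$ gives, $\Prob$-a.e.,
\[ \inner{\int_0^t\int_{\Phi'} U(t,r)' R(r,f)\,L(dr,df)}{\psi} = \int_0^t\int_{\Phi'} R(r,f)'U(r,t)\psi\,L(dr,df), \]
while \eqref{eqWeakStrongCompLevyInteg} gives
\[ \inner{\int_0^t\int_{\Phi'} R(r,f)\,L(dr,df)}{\psi} = \int_0^t\int_{\Phi'} R(r,f)'\psi\,L(dr,df). \]
Subtracting, the right-hand side of \eqref{eqFubiniStochConvoLevyStrong} equals the second (middle) line of \eqref{eqFubiniStochConvoLevyWeak}.

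Next I would treat the left-hand side. For each fixed $s \in [0,t]$ the inner integral is the stochastic convolution evaluated at the running time $s$, and $A(s)\psi \in \Psi$ by Assumption \ref{assuSEELevy}(A2)(a); hence \eqref{eqWeakStrongCompStochConvLevy}, applied with test element $A(s)\psi$, yields
\[ \inner{\int_0^s\int_{\Phi'} U(s,r)'R(r,f)\,L(dr,df)}{A(s)\psi} = \int_0^s\int_{\Phi'} R(r,f)'U(r,s)A(s)\psi\,L(dr,df). \]
Integrating this identity in $s$ over $[0,t]$ reproduces exactly the first line of \eqref{eqFubiniStochConvoLevyWeak}, and Lemma \ref{lemmFubiniStochConvLevy} then closes the chain of equalities.

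The one point that needs care --- and what I expect to be the main obstacle --- is the $s$-dependence on the left-hand side: \eqref{eqWeakStrongCompStochConvLevy} is a statement for a single test element holding outside a $\Prob$-null set that a priori depends on that element, here $A(s)\psi$, and we must integrate in $s$. To make the $ds$-integral of the identity legitimate I would first fix jointly measurable versions of both sides in $(s,\omega)$, using the regularity of the convolution process together with the continuity of $s \mapsto A(s)\psi$ from Assumption \ref{assuSEELevy}(A2)(a); apply the compatibility identity for each fixed $s$; and then invoke Tonelli/Fubini to upgrade the family of $\Prob$-a.e. identities to a single $\mathrm{Leb}\otimes\Prob$-a.e. identity on $[0,t]\times\Omega$. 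Finiteness of the resulting $ds$-integral (so that the manipulation is not merely formal) is guaranteed by \eqref{eqStochConvASSecondMomentFamilyAs} in Proposition \ref{propASFiniMomeStochConv} via the Cauchy--Schwarz inequality, which again relies on the continuity hypothesis in Assumption \ref{assuSEELevy}(A2)(a).
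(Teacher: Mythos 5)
Your proof is correct and follows essentially the same route as the paper, which obtains the corollary directly from Lemma \ref{lemmFubiniStochConvLevy} via the weak--strong compatibility identities \eqref{eqWeakStrongCompLevyInteg} and \eqref{eqWeakStrongCompStochConvLevy}; your additional care with the $s$-dependent null sets and with the integrability needed to integrate the identity in $s$ (via \eqref{eqStochConvASSecondMomentFamilyAs}) only makes explicit what the paper leaves implicit. Note that you have, correctly, read the inner integral on the left-hand side as $\int_0^s\int_{\Phi'}U(s,r)'R(r,f)\,L(dr,df)$ rather than the $U(t,r)'$ printed in the statement, which is the reading consistent with Lemma \ref{lemmFubiniStochConvLevy} and with the way the corollary is invoked in the proof of Theorem \ref{theoExistSoluSEELevy}.
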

\begin{proof}
The equality in \eqref{eqFubiniStochConvoLevyStrong} follows from \eqref{eqFubiniStochConvoLevyWeak}, by  considering the compatibility between weak and strong integrals in \eqref{eqWeakStrongCompLevyInteg} and \eqref{eqWeakStrongCompStochConvLevy}. 
\end{proof}

\begin{proof}[Proof of Theorem \ref{theoExistSoluSEELevy}] 
Let $(X_{t}: t \geq 0)$ be as defined in \eqref{eqDefMildSoluSEELevy}. 
We already know from Theorem \ref{theoExistStochConvLevy} that the stochastic convolution is a $\Psi'$-valued regular adapted process. The strong continuity of the forward evolution system $(U(t,s)': 0 \leq s \leq t)$ and our assumptions on $\eta$ imply that $(U(t,0)'\eta: t \geq 0)$ is a $\Psi'$-valued regular adapted process, so is $(X_{t}: t \geq 0)$. 

Let $t \geq 0$ and $\psi \in D_{t}$. We must show that $\int_{0}^{t} \abs{\inner{X_{s}}{A(s)\psi}} ds< \infty$ $\Prob$-a.e. In effect, from our assumptions the mapping $s \mapsto \inner{U(s,0)'\eta}{A(s)\psi}$ has continuous paths on $[0,t]$. Hence, we have $\int_{0}^{t} \abs{\inner{U(s,0)'\eta}{A(s)\psi}} ds< \infty$ $\Prob$-a.e. Then from \eqref{eqStochConvASSecondMomentFamilyAs} and \eqref{eqDefMildSoluSEELevy} we conclude that $\int_{0}^{t} \abs{\inner{X_{s}}{A(s)\psi}} ds< \infty$ $\Prob$-a.e. 

Our next objective is to show that $(X_{t}: t \geq 0)$ satisfies \eqref{eqDefWeakSoluSEELevy}. 
Fix $t \geq 0$ and choose any $\psi \in D_{t}$. From \eqref{eqDefMildSoluSEELevy} for each $s \in [0,t]$ we have
$$ \inner{X_{s}}{A(s)\psi}=\inner{U(s,0)'\eta}{A(s)\psi}
+\inner{ \int_{0}^{s}\int_{\Phi'} \, U(s,r)' R(r,f) \, L(dr,df)}{A(s)\psi}. $$
Integrating both sides on $[0,t]$ with respect to the Lebesgue measure, then from \eqref{eqFubiniStochConvoLevyStrong} and the forward equation \eqref{eqForwardEquation}, we have $\Prob$-a.e.
\begin{flalign*}
& \int_{0}^{t} \inner{X_{s}}{A(s) \psi} ds \\
& =  \int_{0}^{t} \inner{\eta}{U(0,s) A(s)\psi} ds 
+ \int_{0}^{t} \inner{ \int_{0}^{s} \int_{\Phi'} \, U(s,r)' R(r,f) \, L(dr,df)}{A(s)\psi} ds  \\
& =  \inner{\eta}{U(0,t)\psi-\psi} + \inner{\int_{0}^{t} \int_{\Phi'} \, U(t,r)' R(r,f) \, L(dr,df)}{\psi}-\inner{\int_{0}^{t} \int_{\Phi'} \, R(r,f) \, L(dr,df)}{\psi} \\
& =  \inner{X_{t}}{\psi} -\inner{\eta}{\psi}- \inner{\int_{0}^{t} \int_{\Phi'} \, R(r,f) \, L(dr,df)}{\psi}.  
\end{flalign*} 
Hence, the process $X=(X_{t}: t \geq 0)$ defined by \eqref{eqDefMildSoluSEELevy} is a weak solution to 
\eqref{eqSEELevy}. 
\end{proof}

The mild solution possesses almost surely square integrable paths as the next result shows. 

\begin{proposition}\label{propASFiniteMomentMildSol}
Let $X=(X_{t}: t \geq 0)$ be the mild solution  \eqref{eqDefMildSoluSEELevy} to \eqref{eqSEELevy}.
Then, $X$ has almost surely finite square moments, i.e. $\forall t >0$, $\psi \in \Psi$, we have $\Prob$-a.e. $\int_{0}^{t} \abs{\inner{X_{s}}{\psi}}^{2} ds< \infty$. 
\end{proposition}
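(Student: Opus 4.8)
The plan is to decompose the mild solution into its two constituent pieces and bound each one separately, invoking the almost-sure square integrability of the stochastic convolution that has already been established. First I would use the defining identity \eqref{eqDefMildSoluSEELevy} together with the duality relation $\inner{U(s,0)'\eta}{\psi}=\inner{\eta}{U(0,s)\psi}$ to write, for each $s \in [0,t]$ and any fixed $\psi \in \Psi$,
$$ \inner{X_{s}}{\psi} = \inner{\eta}{U(0,s)\psi} + \inner{\int_{0}^{s}\int_{\Phi'} U(s,r)' R(r,f) \, L(dr,df)}{\psi}. $$
Applying the elementary bound $(a+b)^{2} \leq 2a^{2}+2b^{2}$ and integrating over $[0,t]$ then reduces the claim to showing that each of the two resulting integrals is $\Prob$-a.e.\ finite.

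The convolution term needs no new argument. Indeed, $\int_{0}^{t}\left(\inner{\int_{0}^{s}\int_{\Phi'} U(s,r)'R(r,f)\,L(dr,df)}{\psi}\right)^{2} ds$ is exactly the quantity appearing in \eqref{eqStochConvASSecondMoment}, which Proposition \ref{propASFiniMomeStochConv} guarantees is $\Prob$-a.e.\ finite for every $t>0$ and $\psi \in \Psi$.

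For the initial-condition term I would argue pathwise, exactly as in the proof of Theorem \ref{theoExistSoluSEELevy}. Since $\eta$ is regular and the forward evolution system is strongly continuous, the map $[0,t] \ni s \mapsto \inner{U(s,0)'\eta}{\psi} = \inner{\eta}{U(0,s)\psi}$ has $\Prob$-a.e.\ continuous paths; being continuous on the compact interval $[0,t]$ it is bounded there, and hence $\int_{0}^{t}\abs{\inner{\eta}{U(0,s)\psi}}^{2} ds < \infty$ $\Prob$-a.e. Combining the two estimates yields $\int_{0}^{t}\abs{\inner{X_{s}}{\psi}}^{2} ds < \infty$ $\Prob$-a.e., which is the assertion.

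I do not expect any genuine obstacle here: all of the analytic difficulty is absorbed into Proposition \ref{propASFiniMomeStochConv}. The only point that deserves a moment's care is that the initial-condition term should be controlled \emph{pathwise}---using the regularity of $\eta$, so that $\eta$ may be evaluated against the continuous $\Psi$-valued curve $U(0,\cdot)\psi$---rather than in expectation, since $\eta$ is not assumed to have any moments and only an almost-sure conclusion is sought.
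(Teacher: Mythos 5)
Your proposal is correct and follows essentially the same route as the paper: the paper's proof likewise splits the mild solution via \eqref{eqDefMildSoluSEELevy}, cites Proposition \ref{propASFiniMomeStochConv} for the stochastic convolution term, and handles $(U(t,0)'\eta : t \geq 0)$ by the strong continuity of the forward evolution system. Your additional remarks on the elementary inequality $(a+b)^{2}\leq 2a^{2}+2b^{2}$ and on arguing pathwise for the initial-condition term simply make explicit what the paper leaves implicit.
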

\begin{proof}
From Proposition \ref{propASFiniMomeStochConv} the stochastic convolution has almost surely finite square  moments. The same holds true for the process $(U(t,0)'\eta: t \geq 0)$ as a direct consequence of the strong continuity of the forward evolution system $(U(t,s)': 0 \leq s \leq t< \infty)$. From \eqref{eqDefMildSoluSEELevy} we have the corresponding result for $X$.  
\end{proof}

Our next main issue in this section concerns the uniqueness of solutions to \eqref{eqSEELevy}. We will require some extra regularity on the trajectories of the solution which is explained in the following definition. 

\begin{definition}\label{defiASLocalBocnTraje}
We say that a $\Psi'$-valued adapted process $Y=(Y_{t}: t \geq 0)$ has \emph{almost surely locally Bochner integrable trajectories} if for each $t>0$ there exists $\Omega_{t} \subseteq \Omega$ with $\Prob(\Omega_{t})=1$ and such that for each $\omega \in \Omega_{t}$ there exists a continuous seminorm $p=p(t,\omega)$ on $\Psi$ such that $Y_{s}(\omega) \in \Psi_{p}$ a.e. on $[0,t]$ and $\int_{0}^{t} \, p'(Y_{s}(\omega)) \, ds< \infty$. 
\end{definition}

The property of almost surely locally Bochner integrable trajectories is implied by other stronger regularity properties of the paths as for example are the existence of finite moments (see Theorem \ref{theoExisUniqStronSquaIntegSolution}) and for processes having c\`{a}dl\`{a}g trajectories as the next result shows. 

\begin{proposition}\label{propCadlagImplyASBochner}
Let $Y=(Y_{t}: t \geq 0)$ be a $\Psi'$-valued regular adapted c\`{a}dl\`{a}g process. Then $Y$ has almost surely locally Bochner integrable trajectories.  
\end{proposition}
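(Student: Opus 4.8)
The plan is to fix $t>0$, restrict attention to the (probability one) event $\Omega_{t}$ on which the trajectory $s \mapsto Y_{s}(\omega)$ is càdlàg in $\Psi'_{\beta}$ on $[0,t]$, and then for each fixed $\omega \in \Omega_{t}$ produce a single continuous Hilbertian seminorm $p=p(t,\omega)$ with the required properties. The whole problem is thereby reduced to a pathwise (deterministic) statement about one càdlàg map $x\defeq Y_{\cdot}(\omega):[0,t]\to\Psi'_{\beta}$: I must show that its range is contained in some dual Hilbert space $\Psi'_{p}$ with $\sup_{s\in[0,t]} p'(x(s))<\infty$, for then $\int_{0}^{t} p'(Y_{s}(\omega))\,ds \leq t\,\sup_{s\in[0,t]}p'(Y_{s}(\omega))<\infty$ and every value lies in $\Psi'_{p}$, which is exactly what Definition \ref{defiASLocalBocnTraje} requires.

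First I would show that the range $K\defeq\{x(s):s\in[0,t]\}$ is bounded in $\Psi'_{\beta}$. Since the strong topology is generated by the seminorms $\eta_{B}$ with $B\subseteq\Psi$ bounded, it suffices to bound $\sup_{s\in[0,t]}\eta_{B}(x(s))$ for each such $B$. As $x$ is càdlàg and $\eta_{B}$ is continuous and subadditive, the real-valued map $s\mapsto\eta_{B}(x(s))$ is right-continuous, because $\abs{\eta_{B}(x(s))-\eta_{B}(x(s_{0}))}\leq\eta_{B}(x(s)-x(s_{0}))\to 0$ as $s\downarrow s_{0}$, and admits left limits, equal to $\eta_{B}(x(s_{0}-))$ as $s\uparrow s_{0}$; hence it is a real-valued càdlàg function on the compact interval $[0,t]$ and is therefore bounded. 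This gives $\sup_{s\in[0,t]}\eta_{B}(x(s))<\infty$ for every bounded $B\subseteq\Psi$, i.e. $K$ is a bounded subset of $\Psi'_{\beta}$.

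It remains to upgrade strong boundedness of $K$ to containment in a single $\Psi'_{p}$. Here I would use that $\Psi$, being quasi-complete and bornological, is barrelled (as recalled in the preliminaries), so that every bounded subset of $\Psi'_{\beta}$ is equicontinuous (the uniform boundedness principle for barrelled spaces; see \cite{NariciBeckenstein}). Equicontinuity of $K$ means there is a neighbourhood of zero $V\subseteq\Psi$ with $\sup_{f\in K}\sup_{\phi\in V}\abs{\inner{f}{\phi}}\leq 1$; since $\Psi$ is nuclear its topology is generated by Hilbertian seminorms, so I may choose a continuous Hilbertian seminorm $p$ with $B_{p}(1)\subseteq V$, whence $\sup_{f\in K} p'(f)=\sup_{f\in K}\sup_{\phi\in B_{p}(1)}\abs{\inner{f}{\phi}}\leq 1$. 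Thus $x(s)\in B_{p'}(1)\subseteq\Psi'_{p}$ for every $s\in[0,t]$ and $\sup_{s\in[0,t]}p'(x(s))\leq 1$, completing the argument as indicated above. The main obstacle is precisely this last step: passing from mere strong boundedness of the range to a uniform bound inside a single Hilbert space $\Psi'_{p}$, which is where the barrelledness and nuclearity of $\Psi$ enter in an essential way; the càdlàg-to-bounded reduction, by contrast, is elementary once one checks that $s\mapsto\eta_{B}(x(s))$ is genuinely càdlàg.
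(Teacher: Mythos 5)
Your proof is correct and follows essentially the same route as the paper: both reduce the claim to the pathwise fact that, by barrelledness of $\Psi$, a c\`{a}dl\`{a}g trajectory on $[0,t]$ is confined to a single dual Hilbert space $\Psi'_{p}$ with $\sup_{0 \leq s \leq t} p'(Y_{s}(\omega)) < \infty$, and then bound the integral by $t$ times this supremum. The only difference is that the paper obtains this localization by citing Remark 3.6 of \cite{FonsecaMora:Skorokhod}, whereas you derive it directly from the Banach--Steinhaus theorem (c\`{a}dl\`{a}g range is strongly bounded, hence equicontinuous, hence contained in $B_{p'}(1)$ for some continuous Hilbertian seminorm $p$), which is a valid self-contained substitute.
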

\begin{proof}
Since $Y$ is c\`{a}dl\`{a}g there exists $\Omega_{0} \subseteq \Omega$ such that $\Prob(\Omega_{0})=1$ and for each $\omega \in \Omega$ we have that $s \mapsto X_{s}(\omega) \in D_{\infty}(\Psi')$. As $\Psi$ is barrelled,  given any $t>0$ and $\omega \in \Omega_{0}$ there exists a continuous seminorm $p$ on $\Psi$ such that $s \mapsto Y_{s}(\omega) \in D_{t}(\Psi'_{p})$ (Remark 3.6 in \cite{FonsecaMora:Skorokhod}). Hence, we have
$$ \int_{0}^{t} \, p'(Y_{s}(\omega)) \, ds \leq t \, \sup_{0 \leq s \leq t}  p'(Y_{s}(\omega)) < \infty. $$
Hence $Y$ has almost surely locally Bochner integrable trajectories.
\end{proof}

We are ready for our main result on the uniqueness of solutions to \eqref{eqSEELevy}.

\begin{theorem}\label{theoUniqueSoluSEELevy} 
Let $Y=(Y_{t}: t \geq 0)$ be a weak solution  to \eqref{eqSEELevy} which has almost surely locally Bochner integrable trajectories.  
Then for each $t \geq 0$, $Y_{t}$ is given $\Prob$-a.e. by 
$$ Y_{t}=U(t,0)'\eta+\int_{0}^{t} \int_{\Phi'} \,  U(t,r)' R(r,f)\, L(dr,df).$$
\end{theorem}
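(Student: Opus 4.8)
The plan is to reduce the statement to the pointwise identity $\inner{Y_t}{\zeta}=\inner{X_t}{\zeta}$ $\Prob$-a.e. for every $\zeta$ in the (dense) regularity space $D_t$, where $X_t$ denotes the mild solution \eqref{eqDefMildSoluSEELevy}; once this is established, the density of $D_t$ in $\Psi$ together with the continuity of $\phi\mapsto\inner{Y_t}{\phi}$ and $\phi \mapsto \inner{X_t}{\phi}$, and the fact that both $Y_t$ and $X_t$ are regular $\Psi'$-valued random variables, forces $Y_t=X_t$ $\Prob$-a.e. (Proposition 2.12 in \cite{FonsecaMora:2018}), exactly as in the closing argument of Proposition \ref{propActiLineOperLevyInteg}. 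Recalling that $U(t,0)'$ is dual to $U(0,t)$ and invoking the weak-strong compatibility \eqref{eqWeakStrongCompStochConvLevy}, the target identity reads
$$\inner{Y_t}{\zeta}=\inner{\eta}{U(0,t)\zeta}+\int_{0}^{t}\int_{\Phi'}R(r,f)'U(r,t)\zeta\,L(dr,df),\qquad \zeta\in D_t.$$

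The heart of the argument is a pathwise ``integration by parts against the evolution system'', i.e. to show that $g(s)\defeq\inner{Y_s}{U(s,t)\zeta}$ varies in $s$ only through the noise. I would fix $t>0$, $\zeta\in D_t$, choose a partition $0=s_0<\dots<s_n=t$, and telescope $g(t)-g(0)=\sum_k\left(g(s_{k+1})-g(s_k)\right)$, splitting each increment as $\inner{Y_{s_{k+1}}-Y_{s_k}}{U(s_{k+1},t)\zeta}+\inner{Y_{s_k}}{U(s_{k+1},t)\zeta-U(s_k,t)\zeta}$. For the first piece I apply the increment form of the weak-solution identity \eqref{eqDefWeakSoluSEELevy} with the (legitimate, since $U(s_{k+1},t)D_t\subseteq D_{s_{k+1}}$) test vector $U(s_{k+1},t)\zeta\in D_{s_{k+1}}$; for the second piece I use the backward equation in the form $U(s_{k+1},t)\zeta-U(s_k,t)\zeta=-\int_{s_k}^{s_{k+1}}A(u)U(u,t)\zeta\,du$ coming from \eqref{eqBackwardEquationDeriv}--\eqref{eqBackwardEquation}. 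Letting the mesh tend to $0$, the drift contribution $\int\inner{Y_r}{A(r)U(s_{k+1},t)\zeta}\,dr$ from the first piece and the contribution $-\int\inner{Y_{s_k}}{A(u)U(u,t)\zeta}\,du$ from the second both converge to $\int_0^t\inner{Y_r}{A(r)U(r,t)\zeta}\,dr$ and therefore cancel, in the same spirit in which the forward/backward equations made the $A$-terms cancel in Lemma \ref{lemmFubiniStochConvLevy} and Corollary \ref{coroEqFubiniStochConvoLevyStrong}; what survives is $\int_0^t\int_{\Phi'}R(r,f)'U(r,t)\zeta\,L(dr,df)$, the desired stochastic-convolution term. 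Since $g(0)=\inner{\eta}{U(0,t)\zeta}$ and $g(t)=\inner{Y_t}{\zeta}$, this yields the displayed identity.

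I expect the main obstacle to be the rigorous justification of the two limits whose leading terms cancel. This is precisely where the hypothesis that $Y$ has almost surely locally Bochner integrable trajectories (Definition \ref{defiASLocalBocnTraje}) enters: on a full-probability set there is a continuous Hilbertian seminorm $p$ with $Y_r\in\Psi_p$ a.e. and $\int_0^t p'(Y_r)\,dr<\infty$, so that $\abs{\inner{Y_r}{A(r)(U(s_{k+1},t)-U(r,t))\zeta}}\le p'(Y_r)\,p\big(A(r)(U(s_{k+1},t)-U(r,t))\zeta\big)$, and the cancellation of the drift remainders follows from the (uniform) continuity of $u\mapsto A(u)U(u,t)\zeta$ on $[0,t]$ guaranteed by Assumption \ref{assuSEELevy}(A2)(a)--(b) together with the strong continuity of the evolution system. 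The convergence of the surviving noise Riemann sums to $\int_0^t\int_{\Phi'}R(r,f)'U(r,t)\zeta\,L(dr,df)$ I would handle termwise along the L\'{e}vy--It\^{o} decomposition: pathwise for the drift and large-jump parts, and via the It\^{o} isometry (Theorem 4.7 in \cite{FonsecaMora:2018-1}), or directly the stochastic Fubini theorem (Theorem 4.24 in \cite{FonsecaMora:2018-1}), for the martingale-valued-measure part, using $U(s_{k+1},t)\zeta\to U(r,t)\zeta$. A secondary technical point is measurability: the weak identity \eqref{eqDefWeakSoluSEELevy} holds $\Prob$-a.e. for each fixed test vector, so I must combine the increment identities over a countable set of partition points and upgrade to all $s$ by path continuity and separability before passing to the limit, so that a single null set serves the whole computation.
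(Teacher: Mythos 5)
Your telescoping strategy is genuinely different from the paper's proof, which never discretizes time: there one applies \eqref{eqDefWeakSoluSEELevy} at each fixed time $s$ with the test vector $A(s)U(s,t)\psi\in D_{s}$ (legitimate by Assumption \ref{assuSEELevy}(A2)(b)), integrates the resulting identity in $s$ over $[0,t]$, identifies the iterated drift integral via Fubini's theorem, the closedness of $A(r)$ and the backward equation \eqref{eqBackwardEquation}, and identifies the iterated stochastic integral via Lemma \ref{lemmFubiniStochConvLevy}. That route only ever sees $r\mapsto\inner{Y_{r}}{\,\cdot\,}$ through Lebesgue integrals, which is exactly what the Bochner-integrability hypothesis controls.

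Your discretized version has two genuine gaps. First, a domain problem: to extract $\inner{Y_{s_{k+1}}-Y_{s_{k}}}{U(s_{k+1},t)\zeta}$ from \eqref{eqDefWeakSoluSEELevy} you must apply the identity at time $s_{k}$ with the test vector $U(s_{k+1},t)\zeta$, which is only known to lie in $D_{s_{k+1}}$; the hypothesis $U(s,t)D_{t}\subseteq D_{s}$ yields $U(s_{k},t)\zeta\in D_{s_{k}}$ but not $U(s_{k+1},t)\zeta\in D_{s_{k}}$, and the regularity spaces are not assumed nested in $t$ (the same issue infects the bound $p\bigl(A(r)(U(s_{k+1},t)-U(r,t))\zeta\bigr)$, since $U(s_{k+1},t)\zeta$ need not belong to $\mathrm{Dom}(A(r))$ for $r<s_{k+1}$). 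Second, and more seriously, the cancellation of the two drift contributions requires $\sum_{k}\inner{Y_{s_{k}}}{\int_{s_{k}}^{s_{k+1}}A(u)U(u,t)\zeta\,du}$ to converge to $\int_{0}^{t}\inner{Y_{u}}{A(u)U(u,t)\zeta}\,du$, i.e.\ convergence of Riemann-type sums of the map $u\mapsto\inner{Y_{u}}{A(u)U(u,t)\zeta}$. Under the hypotheses of the theorem this map is merely almost surely Lebesgue integrable on $[0,t]$: $Y$ is not assumed c\`{a}dl\`{a}g or continuous, and \eqref{eqDefWeakSoluSEELevy} only makes $\inner{Y}{\psi}$ a modification, time by time, of a c\`{a}dl\`{a}g process, so the values of $Y$ at the partition points are essentially unconstrained and the Riemann sums need not converge to the Lebesgue integral. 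Your closing appeal to ``path continuity'' presupposes regularity that the theorem deliberately does not assume; this is precisely why the paper replaces summation by parts with the integrated Fubini argument. If you add a c\`{a}dl\`{a}g hypothesis (as in Proposition \ref{propCadlagImplyASBochner}) the telescoping can be repaired, for instance by pairing $Y_{s_{k}}$ with $U(s_{k},t)\zeta$ and $Y_{s_{k+1}}$ with $U(s_{k+1},t)\zeta$ so that each application of \eqref{eqDefWeakSoluSEELevy} is legitimate, but to prove the theorem as stated you should follow the paper's route.
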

\begin{proof}
Let $t \geq 0 $ and $\psi \in D_{t}$. We start by showing  that $\Prob$-a.e. 
\begin{equation}\label{eqCondFubiUniqSolSEE}
\int_{0}^{t} \left( \int_{0}^{t} \, \abs{  \mathbbm{1}_{[0,s]}(r) \inner{Y_{r}}{A(r)A(s)U(s,t) \psi} } dr \right) ds < \infty. 
\end{equation}
Let $\omega \in \Omega_{t}$ and $p=p(t,\omega)$ as in Definition \ref{defiASLocalBocnTraje}. From Assumption \ref{assuSEELevy}(A2) it is clear that the set $\{ A(r)A(s)U(s,t) \psi: 0 \leq r \leq s \leq t\}$ is bounded in $\Psi$, hence bounded under $p$. Then we have 
\begin{multline*}
\int_{0}^{t} \left( \int_{0}^{t} \, \abs{  \mathbbm{1}_{[0,s]}(r) \inner{Y_{r}(\omega)}{A(r)A(s)U(s,t) \psi} } dr \right) ds \\
 \leq  t \sup_{0 \leq r \leq s \leq t} p(A(r)A(s)U(s,t) \psi) \, \int_{0}^{t} \, p'(Y_{s}(\omega)) \, ds< \infty.
\end{multline*}

Now from an application of Fubini's theorem (which we can apply thanks to \eqref{eqCondFubiUniqSolSEE}), then using that $A(r)$ is a closed operator for every $r \in [0,t]$, properties of the Riemann integral, and then using the backward equation \eqref{eqBackwardEquation} we have for $\omega \in \Omega_{t}$, 
\begin{eqnarray}
\int_{0}^{t} \left( \int_{0}^{s} \, \inner{Y_{r}(\omega)}{A(r)A(s)U(s,t) \psi} dr \right) ds 
& = & \int_{0}^{t} \left( \int_{r}^{t} \, \inner{Y_{r}(\omega)}{A(r)A(s)U(s,t) \psi} ds \right) dr  \nonumber \\
& = & \int_{0}^{t}  \inner{Y_{r}(\omega)}{A(r) \int_{r}^{t} A(s)U(s,t) \psi ds}  \,  dr  \nonumber \\
& = & \int_{0}^{t}  \inner{Y_{r}(\omega)}{A(r)(U(r,t) \psi-\psi)}  dr  \label{eqFubiniGeneratorEvoluSystem}
\end{eqnarray}
Given any $s \in [0,t]$, from the definition of weak solution (with $\psi$ replaced by $A(s)U(s,t) \psi$), we have $\Prob$-a.e.
\begin{multline*}
\int_{0}^{s}\int_{\Phi'}  \, R(r,f)'A(s)U(s,t) \psi \, L(dr,df) \\
 =  \inner{Y_{s}-\eta}{A(s)U(s,t) \psi}- \int_{0}^{s} \, \inner{Y_{r}}{A(r)A(s)U(s,t)\psi}\, dr.  
\end{multline*}
Integration both sides on $[0,t]$ with respect to the Lebesgue measure and then using \eqref{eqFubiniGeneratorEvoluSystem} and the backward equation \eqref{eqBackwardEquation} it follows that
\begin{flalign*}
& \int_{0}^{t} \left( \int_{0}^{s}\int_{\Phi'}  \, R(r,f)'A(s)U(s,t) \psi \, L(dr,df) \right) ds  \\
& =  \int_{0}^{t} \inner{Y_{s}}{A(s)U(s,t) \psi} ds-\int_{0}^{t} \inner{\eta}{A(s)U(s,t) \psi} ds \\
& - \int_{0}^{t} \left( \int_{0}^{s} \, \inner{Y_{r}}{A(r)A(s)U(s,t)\psi}\, dr \right) ds \\ 
& =   \int_{0}^{t}  \inner{Y_{s}}{A(s)\psi}  ds -\left( \inner{\eta}{U(0,t)\psi}- \inner{\eta}{\psi} \right).  
\end{flalign*}
Reordering, and then from \eqref{eqFubiniStochConvoLevyWeak} we get that 
\begin{flalign*}
& \inner{\eta}{\psi} + \int_{0}^{t} \inner{Y_{s}}{A(s) \psi} ds \\
& = \inner{U(t,0)'\eta}{\psi}+ \int_{0}^{t} \left( \int_{0}^{s} \, R(r,f)'A(s)U(s,t) \psi \, L(dr,df) \right) ds \\
& =\inner{U(t,0)'\eta}{\psi} +\int_{0}^{t} \int_{\Phi'} \,  R(r,f)'U(r,t) \psi \, L(dr,df) -\int_{0}^{t} \int_{\Phi'} \, R(r,f)' \psi  \,L(dr,df). 
\end{flalign*} 
Summing the term $\int_{0}^{t} \int_{\Phi'} \, R(r,f)' \psi  \,L(dr,df)$ at both sides, then using  \eqref{eqWeakStrongCompStochConvLevy} and the fact that $Y$ is a weak solution we conclude that $\Prob$-a.e
\begin{equation}\label{eqWeakSolIsWeaklyMildSol}
\inner{Y_{t}}{\psi}=\inner{U(t,0)'\eta}{\psi} + \inner{\int_{0}^{t} \int_{\Phi'} \,  U(t,r)' R(r,f)\, L(dr,df)}{\psi}.
\end{equation}
Since \eqref{eqWeakSolIsWeaklyMildSol} is valid for each $\psi$ in the dense subset $D_{t}$ of $\Psi$, we conclude from \eqref{eqWeakStrongCompStochConvLevy} that 
$$ Y_{t}=U(t,0)'\eta+\int_{0}^{t} \int_{\Phi'} \,  U(t,r)' R(r,f)\, L(dr,df).$$ 
\end{proof}

We can combine the results of Theorems \ref{theoExistSoluSEELevy} and \ref{theoUniqueSoluSEELevy} to obtain the following result on existence and uniqueness of solutions to stochastic evolution equations in the time-homogeneous setting, i.e. when $A(t)=A$. 

\begin{corollary} Suppose that $A$ is the generator of a $(C_{0},1)$-semigroup $(S(t):t\geq 0)$ on $\Psi$. 
The stochastic evolution equation
\begin{equation}\label{eqSEETimeHomoLevy}
d Y_{t}=AY_{t}dt +\int_{\Phi'} R(t,f) L(dt,df), \quad t \geq 0, 
\end{equation}
with initial condition $Y_{0}=\eta$ $\Prob$-a.e.  has a weak solution given by the mild solution 
\begin{equation*}
X_{t}=S(t)'\eta +\int_{0}^{t}\int_{\Phi'} \, S(t-s)' R(s,f) \, L(ds,df), \quad \forall \, t \geq 0. 
\end{equation*}
Moreover, any weak solution $Y=(Y_{t}: t \geq 0)$ to \eqref{eqSEETimeHomoLevy} which has almost surely locally Bochner integrable trajectories is a version of $X=(X_{t}: t \geq 0)$. 
\end{corollary}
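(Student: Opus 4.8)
The plan is to derive this corollary directly from the two main theorems by specializing to the time-homogeneous evolution system. First I would observe that, as noted in the discussion just before Section~\ref{subSectStocConvLevyProc}, setting $U(s,t)=S(t-s)$ for $0 \leq s \leq t$ defines a backward evolution system whose family of generators is the constant family $A(t)=A$ with $\mathrm{Dom}(A_{t})=D_{t}=\mathrm{Dom}(A)$ for every $t\geq 0$, and that if $(S(t):t\geq 0)$ is a $(C_{0},1)$-semigroup then this evolution system is $(C_{0},1)$. Thus Assumption~\ref{assuSEELevy}(A2) holds once I verify its two additional conditions (a) and (b), which is exactly the content of item~(1) of the Remark following Assumption~\ref{assuSEELevy}: since $A(s)\psi=A\psi$ is independent of $s$ it is trivially continuous on $[0,t]$, and $A(s)U(s,t)\psi=A\,S(t-s)\psi=S(t-s)A\psi\in\mathrm{Dom}(A)=D_{s}$ by the standard commutation property of a $C_{0}$-semigroup with its generator.

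Next I would confirm that the remaining hypotheses of Assumption~\ref{assuSEELevy} are inherited from those of the corollary: $\eta$ is $\mathcal{F}_{0}$-measurable, $\Phi'$-valued and regular by hypothesis, giving (A1); $L$ is a $\Phi'$-valued L\'{e}vy process with the decomposition~\eqref{eqLevyItoDecomposition}, giving (A3); and $R\in\Lambda(L,\Phi,\Psi)$, giving (A4). With all of Assumption~\ref{assuSEELevy} in force for the time-homogeneous data, I can apply Theorem~\ref{theoExistSoluSEELevy} directly. It yields that \eqref{eqSEETimeHomoLevy} has a weak solution given by the mild solution $X_{t}=U(t,0)'\eta+\int_{0}^{t}\int_{\Phi'}U(t,s)'R(s,f)\,L(ds,df)$, and substituting $U(t,0)'=S(t)'$ and $U(t,s)'=S(t-s)'$ (using the notation convention recorded in Remark~\ref{remaDefStochConvC01Semig}) gives precisely the claimed expression $X_{t}=S(t)'\eta+\int_{0}^{t}\int_{\Phi'}S(t-s)'R(s,f)\,L(ds,df)$.

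For the uniqueness claim I would invoke Theorem~\ref{theoUniqueSoluSEELevy}: any weak solution $Y=(Y_{t}:t\geq 0)$ to \eqref{eqSEETimeHomoLevy} with almost surely locally Bochner integrable trajectories satisfies, for each $t\geq 0$, $Y_{t}=U(t,0)'\eta+\int_{0}^{t}\int_{\Phi'}U(t,r)'R(r,f)\,L(dr,df)=X_{t}$ $\Prob$-a.e., so $Y$ is a version of $X$. Since equation \eqref{eqSEETimeHomoLevy} is literally \eqref{eqSEELevy} with the constant generator family, no separate argument is needed beyond the translation of notation.

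I do not expect any genuine obstacle here, as the corollary is a pure specialization; the only point requiring minor care is the verification of Assumption~\ref{assuSEELevy}(A2)(b), namely that $S(t-s)A\psi$ lies in $\mathrm{Dom}(A)$, which is the one place where a semigroup-specific fact (the invariance $S(t)\mathrm{Dom}(A)\subseteq\mathrm{Dom}(A)$ together with $AS(t)\psi=S(t)A\psi$) is used rather than the abstract evolution-system axioms. This is supplied verbatim by item~(1) of the Remark after Assumption~\ref{assuSEELevy}, so the proof reduces to citing that remark and then applying Theorems~\ref{theoExistSoluSEELevy} and~\ref{theoUniqueSoluSEELevy}.
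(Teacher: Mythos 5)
Your proposal is correct and follows exactly the route the paper intends: the corollary is stated as a direct combination of Theorems \ref{theoExistSoluSEELevy} and \ref{theoUniqueSoluSEELevy} after specializing to $U(s,t)=S(t-s)$, with Assumption \ref{assuSEELevy}(A2) supplied by item (1) of the remark following that assumption. No gaps.
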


\section{Properties of the Solution}\label{sectPropertiesSolution}

In this section we study different properties of the solution to \eqref{eqSEELevy}. Apart from Assumption \ref{assuSEELevy}, some other assumptions might be considered in each subsection.  

\subsection{Square Integrable Solutions}

In this section we consider the existence of solutions with (strong) square moments. For this purpose, we naturally require the L\'{e}vy process $L=(L_{t}: t \geq 0)$ to be square integrable. In such a case by the L\'{e}vy-It\^{o} decomposition \eqref{eqLevyItoDecomposition} we must have that the Poisson integral process $ \int_{B_{\rho'}(1)^{c}} f N (t,df)$, $t \geq 0$, 
corresponding to the large jumps of $L$ is square integrable. Thus we have $\int_{B_{\rho'}(1)^{c}} \abs{\inner{f}{\phi}}^{2} \, \nu(df) < \infty$, $\forall \phi \in \Phi$. In particular, it follows that 
$\goth{n} \in \Phi'$ where $\inner{\goth{n}}{\phi}=\int_{B_{\rho'}(1)^{c}} \inner{f}{\phi} \, \nu(df)$ for every $\phi \in \Phi$. Moreover, the $\Phi'$-valued compensated Poisson integral $ \int_{B_{\rho'}(1)^{c}} f \widetilde{N} (t,df)= \int_{B_{\rho'}(1)^{c}} f N (t,df) - t\goth{n}$, $t \geq 0$, 
is a zero-mean, square integrable c\`{a}dl\`{a}g L\'{e}vy process and $\Exp \left( \abs{\inner{\int_{B_{\rho'}(1)^{c}} f \widetilde{N} (t,df)}{\phi}}^{2} \right)= t \int_{B_{\rho'}(1)^{c}} \abs{\inner{f}{\phi}}^{2} \, \nu(df)$, $ \forall \phi \in \Phi$, $t \geq 0$ (see Section 4.1 in \cite{FonsecaMora:Levy}). 

From the above arguments and \eqref{eqLevyItoDecomposition} we can decompose $L$ as
\begin{equation} \label{eqLevyItoDecompoSquareIntegrable}
L_{t}=t\widetilde{\goth{m}}+W_{t}+\int_{\Phi'} f \widetilde{N} (t,df),
\end{equation}
for $\widetilde{\goth{m}}=\goth{m}+\goth{n} \in \Phi'$ and 
where
$$ \int_{\Phi'} f \widetilde{N} (t,df) \defeq  \int_{B_{\rho'}(1)} f \widetilde{N} (t,df)+ \int_{B_{\rho'}(1)^{c}} f \widetilde{N} (t,df).$$
 
The decomposition \eqref{eqLevyItoDecompoSquareIntegrable} of $L$ motivates the introduction of the following class of integrands.  
 
\begin{definition}
Let $\Lambda^{2}(L,\Phi,\Psi)$ denotes the  collection of all $R \in \Lambda(L,\Phi,\Psi)$ satisfying for every $T >0$ and $\psi \in \Psi$ that
\begin{equation*}
\Exp \int_{0}^{T} \, \left[  \abs{\inner{\goth{m}}{R(s,0)'\psi}}^{2} + \mathcal{Q}(R(s,0)'\psi)^{2} + \int_{\Phi'} \abs{\inner{f}{R(s,f)'\psi}}^{2} \nu(df)\, \right] ds < \infty.
\end{equation*}
\end{definition}

Assume $R \in \Lambda^{2}(L,\Phi,\Psi)$. 
By Remark \ref{remaCompenPoissInteg} (for $V=B_{\rho'}(1)^{c}$) and \eqref{eqDefiWeakStochConvLevy}, for each $t \geq 0$ and $\psi \in \Psi$ we have $\Prob$-a.e.
\begin{eqnarray}
\int_{0}^{t}\int_{\Phi'} \, R(s,f)' U(s,t) \psi \, L(ds,df)
 & = & \int_{0}^{t} \inner{\goth{m}}{ R(s,f)' U(s,t) \psi} ds \nonumber \\
& {} &  + \int_{0}^{t} \int_{B_{\rho'}(1)^{c}} \, \inner{f}{R(s,f)'  U(s,t) \psi} \nu(df) \, ds \nonumber \\
& {} & + \int^{t}_{0} \int_{\Phi'}  R(s,f)' U(s,t) \psi M (ds, df), \label{eqStochConvLevySquareInteg}
\end{eqnarray} 
where the stochastic integral with respect to the L\'{e}vy martingale-valued measure $M$ on $\Phi'$ is well-defined since by our square integrability assumption on $L$ we have $\int_{\Phi'} \abs{\inner{f}{\phi}}^{2} \, \nu(df) < \infty$, $\forall \phi \in \Phi$. In \eqref{eqStochConvLevySquareInteg} we have used that $\Prob$-a.e.
\begin{eqnarray*}
\int^{t}_{0} \int_{\Phi'}  R(s,f)' U(s,t) \psi M (ds, df) & = & \int^{t}_{0} \int_{B_{\rho'}(1)}  R(s,f)' U(s,t) \psi M (ds, df) \\
& {} & + \int^{t}_{0} \int_{B_{\rho'}(1)^{c}}  R(s,f)' U(s,t) \psi M (ds, df), 
\end{eqnarray*}
which can be justified using Proposition 4.12 in \cite{FonsecaMora:2018-1}. We will apply \eqref{eqStochConvLevySquareInteg} to obtain the existence of strong seconds moments for the stochastic convolution:

\begin{proposition}\label{propStochConvStrongSquareMoment}
Suppose that $L$ is square integrable and $R \in \Lambda^{2}(L,\Phi,\Psi)$. 
For every $t>0$, there exists a continuous Hilbertian seminorm $\varrho$ on $\Psi$ such that 
\begin{equation}\label{eqStochConvStrongSquareMoment}
 \Exp \int_{0}^{t} \, \varrho' \left( \int_{0}^{s} \int_{\Phi'} \, U(s,r)' R(r,f) \, L(dr,df) \right)^{2} ds < \infty. 
\end{equation} 
\end{proposition}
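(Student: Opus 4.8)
The plan is to first establish a \emph{weak} second moment bound for the stochastic convolution and then to upgrade it to the strong Hilbertian bound \eqref{eqStochConvStrongSquareMoment} by a regularization argument exploiting the nuclearity of $\Psi$, exactly as in the passage from weak to strong moments carried out in the proof of Theorem \ref{theoExistStochConvLevy} via Lemma 6.11 in \cite{FonsecaMora:2018-1}. Write $Z_{s} \defeq \int_{0}^{s} \int_{\Phi'} U(s,r)' R(r,f)\, L(dr,df)$ for the stochastic convolution up to time $s$. The weak bound to be proved is that for each $\psi \in \Psi$,
\[
\Exp \int_{0}^{t} \abs{\inner{Z_{s}}{\psi}}^{2} \, ds < \infty.
\]

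For this I would fix $\psi \in \Psi$ and use the weak-strong compatibility \eqref{eqWeakStrongCompStochConvLevy} together with the representation \eqref{eqStochConvLevySquareInteg} applied with terminal time $s$ in place of $t$, which splits $\inner{Z_{s}}{\psi}$ into the $\goth{m}$-drift term, the $B_{\rho'}(1)^{c}$-compensation drift term, and the martingale term with respect to $M$ over all of $\Phi'$. Using $(a+b+c)^{2} \leq 3(a^{2}+b^{2}+c^{2})$, it suffices to bound each contribution after integrating over $s \in [0,t]$ and taking expectations. The $\goth{m}$-drift term is handled exactly as the first estimate in the proof of Proposition \ref{propASFiniMomeStochConv}, i.e. by Jensen's inequality, the seminorm $\varrho$ produced in \eqref{eqIntegForGothM}, and the $(C_{0},1)$ bound $\varrho(U(r,s)\psi) \leq e^{\vartheta_{\varrho}(s-r)} q(\psi) \leq e^{\vartheta_{\varrho} t} q(\psi)$, valid for $0 \leq r \leq s \leq t$, where the boundedness of $[0,t]$ guarantees finiteness. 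The compensation drift term over $B_{\rho'}(1)^{c}$ is controlled by first applying the Cauchy--Schwarz inequality in the variable $f$, which is legitimate since $\restr{\nu}{B_{\rho'}(1)^{c}}$ is a bounded measure, and then invoking the $\Lambda^{2}(L,\Phi,\Psi)$ integrability hypothesis together with the same $(C_{0},1)$ estimate.

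For the martingale term I would apply the It\^{o} isometry (Theorem 4.7 in \cite{FonsecaMora:2018-1}), which expresses its second moment as the sum of a $\mathcal{Q}$-term and a $\nu$-term over all of $\Phi'$. Factoring $R$ through a continuous Hilbertian seminorm $p$ on $\Psi$ by means of Theorem 5.11 in \cite{FonsecaMora:2018-1}, so that $R(r,\omega,f)=i'_{p}\tilde{R}(r,\omega,f)$, and using the $(C_{0},1)$ estimate $p(U(r,s)\psi) \leq e^{\vartheta_{p} t} q(\psi)$ reduces the bound to the $\Lambda^{2}$ integrability of $\tilde{R}$, uniformly for $s \in [0,t]$; integrating the resulting constant over $s \in [0,t]$ keeps it finite. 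This completes the weak second moment estimate.

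Finally, the stochastic convolution is a $\Psi'$-valued regular adapted process whose induced cylindrical process is weakly measurable by Theorem \ref{theoExistStochConvLevy}, so that $\psi \mapsto \inner{Z_{\cdot}}{\psi}$ maps $\Psi$ into the space of square integrable functions on $\Omega \times [0,t]$ with respect to the product of $\Prob$ with Lebesgue measure; the weak bound just obtained shows this map is everywhere defined, and the closed graph theorem, applicable since $\Psi$ is ultrabornological, makes it continuous. A mild modification of Lemma 6.11 in \cite{FonsecaMora:2018-1}, which uses the existence of a Hilbert--Schmidt inclusion $i_{p,q}$ guaranteed by the nuclearity of $\Psi$, then produces a single continuous Hilbertian seminorm $\varrho$ on $\Psi$ for which $Z_{s} \in \Psi'_{\varrho}$ a.e. and \eqref{eqStochConvStrongSquareMoment} holds. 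I expect this last step, the passage from the family of weak square integrability bounds to one strong seminorm via nuclearity, to be the crux of the argument; the weak estimates themselves are essentially repetitions of the computations already performed for Theorem \ref{theoExistStochConvLevy} and Proposition \ref{propASFiniMomeStochConv}.
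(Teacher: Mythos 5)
Your proposal follows essentially the same route as the paper: first the weak bound $\Exp \int_{0}^{t} \abs{\inner{Z_{s}}{\psi}}^{2}\,ds < \infty$ term by term through the decomposition \eqref{eqStochConvLevySquareInteg}, then the upgrade to a single continuous Hilbertian seminorm $\varrho$ by exploiting nuclearity (the paper shows the seminorm $k(\psi) = (\Exp\int_{0}^{t}\abs{\inner{Z_{s}}{\psi}}^{2}ds)^{1/2}$ is lower-semicontinuous hence continuous and dominates it by a $\varrho$ with Hilbert--Schmidt inclusion; your closed-graph argument yields the same continuity and the rest is identical). The drift and martingale estimates are as in the paper.

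The one step that does not work as written is the compensation term over $B_{\rho'}(1)^{c}$. After Cauchy--Schwarz in $f$ you are left with
\begin{equation*}
\Exp \int_{0}^{t} s\,\nu\bigl(B_{\rho'}(1)^{c}\bigr) \int_{0}^{s}\int_{B_{\rho'}(1)^{c}} \abs{\inner{f}{R(r,f)'U(r,s)\psi}}^{2}\,\nu(df)\,dr\,ds,
\end{equation*}
and you propose to finish by ``invoking the $\Lambda^{2}$ integrability hypothesis together with the same $(C_{0},1)$ estimate.'' But the $\Lambda^{2}$ hypothesis is a family of scalar bounds, one for each fixed $\psi$, whereas here the test element $U(r,s)\psi$ varies with $(r,s)$; you cannot substitute it into the $\Lambda^{2}$ bound and then extract the $(C_{0},1)$ constant. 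To decouple $R(r,f)'$ from $U(r,s)\psi$ you must first upgrade the weak square $\nu$-integrability of $R$ to a strong one: show that $\psi \mapsto (\Exp\int_{0}^{t}\int_{B_{\rho'}(1)^{c}}\abs{\inner{f}{R(r,f)'\psi}}^{2}\nu(df)\,dr)^{1/2}$ is a continuous Hilbertian seminorm, dominate it by a $q$ with Hilbert--Schmidt inclusion so that $\Exp\int_{0}^{t}\int_{B_{\rho'}(1)^{c}} q'(R(r,f)f)^{2}\,\nu(df)\,dr<\infty$, and only then bound $\abs{\inner{f}{R(r,f)'U(r,s)\psi}} \leq q'(R(r,f)f)\,q(U(r,s)\psi) \leq q'(R(r,f)f)\,e^{\vartheta_{q}t}p(\psi)$. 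This is exactly the nuclearity device you already invoke for the drift term (via \eqref{eqIntegForGothM}) and in your final step, so the omission is fillable, but as stated the appeal to ``$\Lambda^{2}$ plus $(C_{0},1)$'' skips the essential intermediate seminorm.
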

\begin{proof} Let $t>0$. Our first task will be to demonstrate that 
\begin{equation}\label{eqFiniSecMomeStocConvLevySquaInte}
\Exp \int_{0}^{t} \left(\int^{s}_{0} \int_{\Phi'} \,  R(r,f)' U(r,s) \psi \, L(dr, df)  \right)^{2} ds < \infty, \quad \psi \in \Psi. 
\end{equation}
To prove \eqref{eqFiniSecMomeStocConvLevySquaInte}, observe that by the arguments used in the proof of Proposition \ref{propASFiniMomeStochConv} we have 
$$ \Exp \int_{0}^{t} \left( \int_{0}^{s} \, \inner{\goth{m}}{R(r,0)'U(r,s)\psi} \, dr \right)^{2} ds < \infty, $$
$$ \Exp \int_{0}^{t} \left(\int^{s}_{0} \int_{\Phi'}  R(r,f)' U(r,s) \psi M (dr, df)  \right)^{2} ds < \infty. $$
Hence in view of \eqref{eqStochConvLevySquareInteg} to prove \eqref{eqFiniSecMomeStocConvLevySquaInte} it is sufficient to check that 
$$ \Exp \int_{0}^{t} \left(  \int_{0}^{s} \,  \int_{B_{\rho'}(1)^{c}} \inner{f}{R(r,f)' U(r,s) \psi} \nu(df)\,  dr \right)^{2} ds < \infty. $$
To do this, note that our assumption that $R \in \Lambda^{2}(L,\Phi,\Psi)$ implies that $r: \Psi \rightarrow [0,\infty)$ given by
$$ r(\psi)= \left( \Exp \int_{0}^{t} \,  \int_{B_{\rho'}(1)^{c}} \abs{\inner{f}{R(r,f)'\psi}}^{2} \nu(df)\,  dr \right)^{1/2}, \quad \forall \psi \in \Psi, $$
is a well-defined Hilbertian seminorm on $\Psi$. Moreover, 
since the mapping $\psi \mapsto \abs{\inner{f}{R(r,f)'\psi}}^{2}$ is continuous on $\Psi$ for each $(r,\omega,f) \in [0,t] \times \Omega \times B_{\rho'}(1)^{c}$, from an application of Fatou's Lemma we can check that $r$ is lower-semicontinuous. Then Proposition 5.7 in \cite{FonsecaMora:Skorokhod} shows that $r$ is continuous on $\Psi$. As $\Psi$ is a nuclear space, there exists a continuous Hilbertian seminorm $q$ on $\Psi$ such that $r \leq q$ and $i_{r,q}$ is Hilbert-Schmidt. Then if $(\psi^{q}_{j}: j \in \N) \subseteq \Psi$ is a complete orthonormal system in $\Psi_{q}$, we have
\begin{eqnarray*}
\Exp \int_{0}^{t} \int_{B_{\rho'}(1)^{c}} q' \left(  R(r,f)f \right)^{2} \nu(df)\,  dr  
& = & \Exp \int_{0}^{t} \int_{B_{\rho'}(1)^{c}} \sum_{j \in \N} \abs{\inner{f}{R(r,f)'\psi^{q}_{j}}}^{2} \nu(df)\,  dr  \\
& = & \sum_{j \in \N} r( \psi^{q}_{j})^{2} 
= \norm{i_{r,q}}^{2}_{\mathcal{L}_{2}(\Psi_{q},\Psi_{r})}
 < \infty. 
\end{eqnarray*}
Since $(U(s,t): 0 \leq s \leq t < \infty)$ is a $(C_{0},1)$-evolution system, there exist $\vartheta_{q} \geq 0$ and a continuous seminorm $p$ on $\Psi$ such that  $ q(U(r,s)\psi) \leq e^{\vartheta_{q} (s-r)} p(\psi)$, for all $0 \leq r \leq s $, $\psi \in \Psi$. Then using the above estimates, Jensen's inequality and Fubini's theorem we have for each $\psi \in \Psi$ that
\begin{flalign*}
& \Exp \int_{0}^{t} \left(  \int_{0}^{s} \,  \int_{B_{\rho'}(1)^{c}} \, \inner{f}{R(r,f)' U(r,s) \psi} \, \nu(df)\,  dr \right)^{2} ds \\
& \leq \Exp \int_{0}^{t} \left(  s \nu \left( B_{\rho'}(1)^{c} \right) \int_{0}^{s} \,  \int_{B_{\rho'}(1)^{c}} \, \abs{\inner{f}{R(r,f)' U(r,s) \psi}}^{2} \, \nu(df)\,  dr \right) ds \\
& \leq  p(\psi)^{2} \nu \left( B_{\rho'}(1)^{c} \right) \, \Exp \int_{0}^{t} \, s e^{2 \vartheta_{q} s} \, \left(  \int_{0}^{s} \,  \int_{B_{\rho'}(1)^{c}} q' \left(  R(r,f)f \right)^{2} \,   \nu(df)\,  dr \right) ds \\
& \leq  p(\psi)^{2} \, \nu \left( B_{\rho'}(1)^{c} \right) \, \left( \Exp \int_{0}^{t} \,  \int_{B_{\rho'}(1)^{c}} q' \left(  R(r,f)f \right)^{2} \, \nu(df) dr \right) 
\left( \int_{0}^{t} \, s e^{2 \vartheta_{q} s} \,  ds \right) < \infty. 
\end{flalign*}
This proves \eqref{eqFiniSecMomeStocConvLevySquaInte}. Now we prove the existence of a continuous Hilbertian seminorm on $\Psi$ for which \eqref{eqStochConvStrongSquareMoment} is satisfied. 

Define $k: \Psi \rightarrow [0,\infty)$ by 
$$k(\psi)=\left( \Exp \int_{0}^{t} \abs{\inner{ \int^{s}_{0} \int_{\Phi'} \,  U(s,r)' R(r,f) \, L(dr, df) }{\psi} }^{2} ds \right)^{1/2}, \quad \forall \, \psi \in \Psi. $$ 
By \eqref{eqWeakStrongCompStochConvLevy} and \eqref{eqFiniSecMomeStocConvLevySquaInte} $k$ is a well-defined Hilbertian seminorm on $\Psi$. From an application of Fatou's Lemma we can check that $r$ is lower-semicontinuous and hence from Proposition 5.7 in \cite{FonsecaMora:Skorokhod} we have that $r$ is continuous on $\Psi$.  As $\Psi$ is a nuclear space, there exists a continuous Hilbertian seminorm $\varrho$ on $\Psi$ such that $k \leq \varrho$ and $i_{k,\varrho}$ is Hilbert-Schmidt. Then if $(\psi^{\varrho}_{j}: j \in \N) \subseteq \Psi$ is a complete orthonormal system in $\Psi_{\varrho}$, we have
\begin{flalign*}
& \Exp \int_{0}^{t} \, \varrho' \left( \int_{0}^{s} \int_{\Psi'} \, U(s,r)' R(r,f) \, L(dr,df) \right)^{2} ds \\
& = \Exp \int_{0}^{t}  \sum_{j \in \N} \abs{ \inner{ \int^{s}_{0} \int_{\Phi'} \,  U(s,r)' R(r,f) \, L(dr, df) }{ \psi^{\varrho}_{j} } }^{2} ds \\
& = \sum_{j \in \N} k( \psi^{\varrho}_{j})^{2} 
= \norm{i_{k,\varrho}}^{2}_{\mathcal{L}_{2}(\Psi_{\varrho},\Psi_{k})} < \infty. 
\end{flalign*} 
\end{proof}

The result of Proposition \ref{propStochConvStrongSquareMoment} together with the following result will allow us to conclude that the mild solution to \eqref{eqSEELevy} has strong second moments. 

\begin{lemma}\label{lemmStrongSecondMomentEta}
Suppose $\eta$ is square integrable. Then, for every $t>0$ there exists a continuous Hilbertian seminorm $\varrho$ on $\Psi$ such that 
\begin{equation}\label{eqStrongSecondMomentEta}
\Exp \int_{0}^{t} \, \varrho' \left( U(s,0)' \eta \right)^{2} \, ds< \infty.
\end{equation}
\end{lemma}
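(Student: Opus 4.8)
The plan is to follow the blueprint of the proof of Proposition \ref{propStochConvStrongSquareMoment}: first I would build a continuous Hilbertian seminorm on $\Psi$ encoding the weak second moments of the process $(U(s,0)'\eta : s \in [0,t])$, and then exploit the nuclearity of $\Psi$ to pass from these weak moments to the strong moment estimate \eqref{eqStrongSecondMomentEta}. As a preliminary step, since $\eta$ is square integrable the map $\mathfrak{q}(\psi) \defeq ( \Exp \abs{\inner{\eta}{\psi}}^{2} )^{1/2}$ is a well-defined Hilbertian seminorm on $\Psi$ (it is the seminorm associated with the symmetric non-negative covariance bilinear form $(\psi,\varphi) \mapsto \Exp(\inner{\eta}{\psi}\inner{\eta}{\varphi})$). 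An application of Fatou's lemma along a sequence $\psi_{n} \to \psi$, using $\inner{\eta}{\psi_{n}} \to \inner{\eta}{\psi}$, shows that $\mathfrak{q}$ is lower semicontinuous, and hence it is continuous on $\Psi$ by Proposition 5.7 in \cite{FonsecaMora:Skorokhod}.

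Next I would invoke the $(C_{0},1)$ property of the backward evolution system, applied to the continuous seminorm $\mathfrak{q}$ with $T=t$: there exist $\vartheta_{\mathfrak{q}} \geq 0$ and a continuous seminorm $p$ on $\Psi$ with $\mathfrak{q}(U(0,s)\psi) \leq e^{\vartheta_{\mathfrak{q}} s} p(\psi)$ for all $0 \leq s \leq t$ and $\psi \in \Psi$. Recalling that $\inner{U(s,0)'\eta}{\psi} = \inner{\eta}{U(0,s)\psi}$ and using Tonelli's theorem, this gives
$$ \Exp \int_{0}^{t} \abs{\inner{U(s,0)'\eta}{\psi}}^{2} \, ds = \int_{0}^{t} \mathfrak{q}(U(0,s)\psi)^{2} \, ds \leq p(\psi)^{2} \int_{0}^{t} e^{2 \vartheta_{\mathfrak{q}} s} \, ds . $$
Consequently the map $k(\psi) \defeq ( \Exp \int_{0}^{t} \abs{\inner{U(s,0)'\eta}{\psi}}^{2} \, ds )^{1/2}$ is a well-defined Hilbertian seminorm on $\Psi$ which is dominated by a constant multiple of the continuous seminorm $p$, and is therefore itself continuous.

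Finally, the nuclearity of $\Psi$ furnishes a continuous Hilbertian seminorm $\varrho$ with $k \leq \varrho$ for which the canonical inclusion $i_{k,\varrho} : \Psi_{\varrho} \to \Psi_{k}$ is Hilbert-Schmidt. Choosing a complete orthonormal system $(\psi^{\varrho}_{j} : j \in \N)$ in $\Psi_{\varrho}$ and expanding the dual norm $\varrho'$ along it, Tonelli's theorem yields
$$ \Exp \int_{0}^{t} \varrho'(U(s,0)'\eta)^{2} \, ds = \sum_{j \in \N} \Exp \int_{0}^{t} \abs{\inner{U(s,0)'\eta}{\psi^{\varrho}_{j}}}^{2} \, ds = \sum_{j \in \N} k(\psi^{\varrho}_{j})^{2} = \norm{i_{k,\varrho}}^{2}_{\mathcal{L}_{2}(\Psi_{\varrho},\Psi_{k})} < \infty , $$
which is exactly \eqref{eqStrongSecondMomentEta}.

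The step that requires the most care is the first identity in this last display, namely $\varrho'(U(s,0)'\eta)^{2} = \sum_{j \in \N} \abs{\inner{U(s,0)'\eta}{\psi^{\varrho}_{j}}}^{2}$, since it presupposes that $U(s,0)'\eta$ takes values in the Hilbert space $\Psi'_{\varrho}$ for $\mathrm{Leb}\otimes\Prob$-almost every $(s,\omega)$. This is not assumed a priori; rather it follows a posteriori from the finiteness of the right-hand side, together with the monotone convergence theorem applied to the partial sums (which forces the series to be finite almost everywhere), exactly as in the proof of Proposition \ref{propStochConvStrongSquareMoment}. The regularity and square integrability of $\eta$ provide the measurability and finiteness needed to justify the uses of Tonelli's theorem throughout.
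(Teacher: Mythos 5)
Your proof is correct and follows essentially the same route as the paper's: establish continuity of the Hilbertian seminorm $k(\psi)=\left(\Exp\int_{0}^{t}\abs{\inner{U(s,0)'\eta}{\psi}}^{2}\,ds\right)^{1/2}$ via the $(C_{0},1)$ bound, then apply nuclearity once to obtain $\varrho$ with $i_{k,\varrho}$ Hilbert--Schmidt and expand $\varrho'$ along an orthonormal system. The only (harmless) difference is that the paper first upgrades the covariance seminorm of $\eta$ to a strong moment $\Exp\left(r'(\eta)^{2}\right)<\infty$ through an extra nuclearity step before estimating, whereas you apply the $(C_{0},1)$ inequality directly to $\mathfrak{q}(U(0,s)\psi)$, which slightly streamlines the argument without affecting its validity.
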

\begin{proof}
Let $k: \Psi \rightarrow [0,\infty)$ be given by $k(\psi)^{2}= \Exp \left( \abs{\inner{\eta}{\psi}}^{2} \right)$ for each $\psi \in \Psi$. Our hypothesis of square integrability of $\eta$ guarantees that $k$ is a Hilbertian seminorm on $\Psi$ and it is clearly continuous since $\eta$ is regular. As $\Psi$ is nuclear, similar arguments to those used in the proof of Proposition \ref{propStochConvStrongSquareMoment} shows the existence of a continuous Hilbertian seminorm $r$ on $\Psi$ such that $k \leq r$ and $\Exp \left( r'(\eta)^{2} \right)< \infty$.

Let $\vartheta_{r} \geq 0$ and $q$ a continuous seminorm  on $\Psi$ such that  $ r(U(r,s)\psi) \leq e^{\vartheta_{r} (s-r)} q(\psi)$, for all $0 \leq r \leq s $, $\psi \in \Psi$. Then, for each $\psi \in \Psi$ we have
$$ \Exp \int_{0}^{t} \, \abs{\inner{U(s,0)'\eta}{\psi}}^{2} \, ds \leq q(\psi)^{2} \, \Exp \left( r'(\eta)^{2} \right) \int_{0}^{t} s e^{2 \vartheta_{r} s} ds < \infty. $$  
Define $p: \Psi \rightarrow [0,\infty)$ by 
$$p(\psi)= \left( \Exp \int_{0}^{t} \, \abs{\inner{U(s,0)'\eta}{\psi}}^{2} \, ds \right)^{1/2}, \quad \forall \psi \in \Psi. $$
Similar arguments to those used in the proof of Proposition \ref{propStochConvStrongSquareMoment} show that $p$ is a continuous Hilbertian seminorm on $\Psi$ and that there exists a continuous Hilbertian seminorm $\varrho$ on $\Psi$, $p \leq \varrho$, such that $i_{p,\varrho}$ is Hilbert-Schmidt and that 
$$ \Exp \int_{0}^{t} \, \varrho' \left( U(s,0)' \eta \right)^{2} \, ds = \norm{i_{p,\varrho}}^{2}_{\mathcal{L}_{2}(\Psi_{\varrho},\Psi_{p})} < \infty. $$
So we have proved \eqref{eqStrongSecondMomentEta}. 
\end{proof}

Finally we have the main result of this section concerning the existence and uniqueness of a solution which is (strongly) square integrable.  

\begin{theorem}\label{theoExisUniqStronSquaIntegSolution}
Suppose that $\eta$ and the L\'{e}vy process $L$ are square integrable, and $R \in \Lambda^{2}(L,\Phi,\Psi)$. Then the mild solution $X=(X_{t}: t \geq 0)$ to \eqref{eqSEELevy} is the unique (up to modifications) weak solution to \eqref{eqSEELevy} satisfying that for every $t>0$ there exists a continuous Hilbertian seminorm $\varrho$ on $\Psi$ such that $\Exp \int_{0}^{t} \, \varrho'(X_{s})^{2} ds < \infty$. 
\end{theorem}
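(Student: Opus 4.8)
The plan is to assemble the statement from three results already in hand. That the mild solution $X=(X_{t}:t\geq 0)$ of \eqref{eqDefMildSoluSEELevy} is a weak solution to \eqref{eqSEELevy} is exactly Theorem \ref{theoExistSoluSEELevy}, so nothing new is needed for existence. The two points that require work are the strong square-integrability of $X$ and the fact that $X$ is the \emph{only} weak solution enjoying this integrability; both reduce to combining the moment estimates of Lemma \ref{lemmStrongSecondMomentEta} and Proposition \ref{propStochConvStrongSquareMoment} with the uniqueness Theorem \ref{theoUniqueSoluSEELevy}.

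For the integrability I would write $X_{s}=U(s,0)'\eta + X_{U,R}(s)$ as in \eqref{eqDefMildSoluSEELevy}, where $X_{U,R}(s)=\int_{0}^{s}\int_{\Phi'}U(s,r)'R(r,f)\,L(dr,df)$ is the stochastic convolution. Since $\eta$ is square integrable, Lemma \ref{lemmStrongSecondMomentEta} yields a continuous Hilbertian seminorm $\varrho_{1}$ on $\Psi$ with $\Exp\int_{0}^{t}\varrho_{1}'(U(s,0)'\eta)^{2}\,ds<\infty$; since $L$ is square integrable and $R\in\Lambda^{2}(L,\Phi,\Psi)$, Proposition \ref{propStochConvStrongSquareMoment} yields a continuous Hilbertian seminorm $\varrho_{2}$ with $\Exp\int_{0}^{t}\varrho_{2}'(X_{U,R}(s))^{2}\,ds<\infty$. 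I would then set $\varrho=(\varrho_{1}^{2}+\varrho_{2}^{2})^{1/2}$, again a continuous Hilbertian seminorm, with $\varrho_{1}\leq\varrho$ and $\varrho_{2}\leq\varrho$. The crucial point is the direction of the dual inequality: $\varrho_{i}\leq\varrho$ makes the inclusions $\Psi'_{\varrho_{i}}\hookrightarrow\Psi'_{\varrho}$ norm-decreasing, so $\varrho'\leq\varrho_{i}'$ on $\Psi'_{\varrho_{i}}$. On the full-measure set where both summands lie in their respective Hilbert duals, the triangle inequality then gives $\varrho'(X_{s})\leq \varrho_{1}'(U(s,0)'\eta)+\varrho_{2}'(X_{U,R}(s))$; squaring via $(a+b)^{2}\leq 2a^{2}+2b^{2}$ and integrating produces $\Exp\int_{0}^{t}\varrho'(X_{s})^{2}\,ds<\infty$.

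For uniqueness, let $Y=(Y_{t}:t\geq 0)$ be any weak solution such that for every $t>0$ there is a continuous Hilbertian seminorm $\varrho$ with $\Exp\int_{0}^{t}\varrho'(Y_{s})^{2}\,ds<\infty$. I would show $Y$ has almost surely locally Bochner integrable trajectories in the sense of Definition \ref{defiASLocalBocnTraje}. Finiteness of the expectation gives $\int_{0}^{t}\varrho'(Y_{s}(\omega))^{2}\,ds<\infty$ for $\omega$ in a set $\Omega_{t}$ of full probability, and Cauchy--Schwarz bounds $\int_{0}^{t}\varrho'(Y_{s}(\omega))\,ds\leq t^{1/2}\bigl(\int_{0}^{t}\varrho'(Y_{s}(\omega))^{2}\,ds\bigr)^{1/2}<\infty$, with the (deterministic) seminorm $\varrho$ playing the role of $p(t,\omega)$. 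Theorem \ref{theoUniqueSoluSEELevy} then forces $Y_{t}=U(t,0)'\eta+\int_{0}^{t}\int_{\Phi'}U(t,r)'R(r,f)\,L(dr,df)=X_{t}$ $\Prob$-a.e. for each $t$, so $Y$ is a version of $X$. Together with the second paragraph, which shows $X$ itself satisfies the required bound, this identifies $X$ as the unique such solution up to modification.

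I expect the only genuinely delicate point to be the seminorm bookkeeping in the integrability step: the two estimates come with \emph{different} dominating seminorms, and one must dominate both at once while respecting that enlarging a seminorm on $\Psi$ \emph{contracts} the associated dual norm. Once $\varrho=(\varrho_{1}^{2}+\varrho_{2}^{2})^{1/2}$ is chosen and the direction of the dual inequalities is tracked correctly, the rest is routine use of the triangle and Cauchy--Schwarz inequalities.
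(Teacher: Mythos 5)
Your proposal is correct and follows essentially the same route as the paper: existence of a weak solution via Theorem \ref{theoExistSoluSEELevy}, square integrability by combining Lemma \ref{lemmStrongSecondMomentEta} with Proposition \ref{propStochConvStrongSquareMoment}, and uniqueness by observing that the moment bound implies almost surely locally Bochner integrable trajectories so that Theorem \ref{theoUniqueSoluSEELevy} applies. The only difference is that you spell out the seminorm bookkeeping (the choice $\varrho=(\varrho_{1}^{2}+\varrho_{2}^{2})^{1/2}$ and the reversal of the dual-norm inequality) and the Cauchy--Schwarz step, which the paper leaves implicit; both are handled correctly.
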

\begin{proof}
We already know by Theorem \ref{theoExistSoluSEELevy} that the mild solution $X=(X_{t}: t \geq 0)$  is a weak solution to \eqref{eqSEELevy}. By \eqref{eqDefMildSoluSEELevy}, Proposition \ref{propStochConvStrongSquareMoment} and Lemma \ref{lemmStrongSecondMomentEta} for every $t>0$ there exists a continuous Hilbertian seminorm $\varrho$ on $\Psi$ such that $\Exp \int_{0}^{t} \, \varrho'(X_{s})^{2} ds < \infty$. This proves the existence of a weak solution satisfying the conditions in the statement of Theorem \ref{theoExisUniqStronSquaIntegSolution}. 

To prove uniqueness, assume that $Y=(Y_{t}: t \geq 0)$ is a any weak solution to \eqref{eqSEELevy} satisfying that for every $t>0$ there exists a continuous Hilbertian seminorm $\varrho$ on $\Psi$ such that $\Exp \int_{0}^{t} \, \varrho'(Y_{s})^{2} ds < \infty$. It is clear that in such a case $Y$ has almost surely Bochner (square) integrable trajectories. Then by Theorem \ref{theoUniqueSoluSEELevy} we have that $Y$ is a version of the mild solution $X$ to \eqref{eqSEELevy}.  
\end{proof}

\subsection{Markov Property}\label{subsectMarkProp}

In this and the next section we will study properties of stochastic evolution equations of the form 
\begin{equation}\label{eqGenLanEquaLevy}
d Y_{t}=A(t)' Y_{t}dt + B' dL_{t}, \quad t \geq 0, 
\end{equation}
with initial condition $Y_{0}=\eta$ $\Prob$-a.e., for $\eta$, $A(t)$, $U(s,t)$, $L$ as in Assumption \ref{assuSEELevy}(A1)-(A3) and for $B \in \mathcal{L}(\Psi,\Phi)$. In accordance with the literature the stochastic evolution equation \eqref{eqGenLanEquaLevy} is called a \emph{(generalized) Langevin equation} with L\'{e}vy noise. 

A weak solution to \eqref{eqGenLanEquaLevy} is a $\Psi'$-valued regular adapted process $Y=(Y_{t}: t \geq 0)$ satisfying that for any given $t \geq 0$, for each $\psi \in D_{t}$ we have $\int_{0}^{t} \abs{\inner{Y_{s}}{A(s)\psi}} ds< \infty$ $\Prob$-a.e. and  
\begin{equation*}
\inner{Y_{t}}{\psi}=\inner{\eta}{\psi} +\int_{0}^{t} \inner{Y_{s}}{A(s)\psi} ds+\inner{B'L_{t}}{\psi}. 
\end{equation*}

Let $R(t,\omega,f)=B'$ for $t \geq 0$, $\omega \in \Omega$, $f \in \Phi'$. We have that $R \in \Lambda(L,\Phi,\Psi)$ since for every $T >0$ and $\psi \in \Psi$ we have
\begin{multline*}
\Exp \int_{0}^{T} \, \left[  \abs{\inner{\goth{m}}{R(s,0)'\psi}}^{2} + \mathcal{Q}(R(s,0)'\psi)^{2} + \int_{B_{\rho'}(1)} \abs{\inner{f}{R(s,f)'\psi}}^{2} \nu(df)\, \right] ds \\
\leq T \left(  \abs{\inner{\goth{m}}{B\psi}}^{2} + \mathcal{Q}(B\psi)^{2} + \rho(B\psi)^{2} \int_{B_{\rho'}(1)} \, \rho'(f)^{2} \,  \nu(df) \right)  < \infty.
\end{multline*}
Moreover, it is easy to verify that 
$\int_{0}^{t} \int_{\Phi'} \, B \psi  \,L(dr,df)=
\inner{B' L_{t}}{\psi}$ for every $\psi \in \Psi$ and hence \eqref{eqGenLanEquaLevy} can be interpreted as a particular case of \eqref{eqSEELevy} for the coefficient $R(t,\omega,f)=B'$. 
Then Theorem \ref{theoExistSoluSEELevy} shows that 
\begin{equation}\label{eqDefMildSoluSEEOU}
X_{t}=U(t,0)'\eta +\int_{0}^{t}\int_{\Phi'} \, U(t,s)' B' \, L(ds,df), \quad \forall \, t \geq 0, 
\end{equation}
is a weak solution to \eqref{eqGenLanEquaLevy}, which is known as a \emph{(generalized) Ornstein-Uhlenbeck process} with L\'{e}vy noise.

Our main objective in this section is to study the flow property and the Markov property of the mild solution \eqref{eqDefMildSoluSEEOU} to \eqref{eqGenLanEquaLevy}.
For $0 \leq s \leq t < \infty$, define $\Gamma_{s,t}: \Psi' \times \Omega \rightarrow \Psi'$ by 
$$ \Gamma_{s,t}(\psi)=U(t,s)'\psi+\int_{s}^{t}\int_{\Phi'} \, U(t,r)'B' \, L(dr,df), \quad \forall \psi \in \Psi. $$

\begin{proposition}\label{propOrnsUhleIsMarkov} Let $(X_{t}: t \geq 0)$ be the Ornstein-Uhlenbeck process. 
\begin{enumerate}
\item The family $(\Gamma_{s,t}: 0 \leq s \leq t <\infty)$ is a stochastic flow: $\Gamma_{s,s}=\mbox{Id}$ and $\Gamma_{s,t} \circ \Gamma_{r,s} = \Gamma_{r,t}$ $\forall 0 \leq r \leq s \leq t < \infty$.  
\item For every $0 \leq s \leq t < \infty$, $\int_{s}^{t} \int_{\Phi'} \, U(t,r)'B' \, L(dr,df)$ is independent of $\mathcal{F}_{s}$. 
\item For each $\psi \in \Psi$, $s,t \geq 0$, 
$$ \Exp \left( e^{i \inner{X_{s+t}}{\psi}} \, \vline \, \mathcal{F}_{s} \right) = \Exp \left( e^{i \inner{X_{s}}{U(s,s+t) \psi}}  \right) H(s,t,\psi), $$
for some deterministic complex-valued function $H(s,t,\psi)$. 
\item $(X_{t}: t \geq 0)$ is a Markov process with respect to the filtration $(\mathcal{F}_{t})$.
\end{enumerate}
\end{proposition}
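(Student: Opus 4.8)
The plan is to establish the four assertions in order, since each feeds into the next. For assertion (1) I would argue purely algebraically. That $\Gamma_{s,s}=\mathrm{Id}$ is immediate from $U(s,s)'=I$ together with the fact that an integral over $[s,s]$ vanishes. For the cocycle identity, fix $0\le r\le s\le t$ and apply $\Gamma_{s,t}$ to $\Gamma_{r,s}(\psi)$. Since $\Gamma_{s,t}$ is affine in its argument, this produces three terms: $U(t,s)'U(s,r)'\psi$, the image $U(t,s)'\int_{r}^{s}\int_{\Phi'}U(s,u)'B'\,L(du,df)$, and $\int_{s}^{t}\int_{\Phi'}U(t,u)'B'\,L(du,df)$. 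The first collapses to $U(t,r)'\psi$ by the forward evolution-system property $U(t,s)'U(s,r)'=U(t,r)'$. For the second I would invoke Proposition \ref{propActiLineOperLevyInteg} with $S=U(t,s)'\in\mathcal{L}(\Psi',\Psi')$ to move $U(t,s)'$ inside the integral, turning the integrand into $U(t,s)'U(s,u)'B'=U(t,u)'B'$; adding the result to the third term and using additivity of the stochastic integral over adjacent time intervals gives $\int_{r}^{t}\int_{\Phi'}U(t,u)'B'\,L(du,df)$, so the total is exactly $\Gamma_{r,t}(\psi)$.

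For assertion (2), write the increment $Z_{s,t}\defeq\int_{s}^{t}\int_{\Phi'}U(t,r)'B'\,L(dr,df)$ using the decomposition \eqref{eqDefiStochaConvLevy} restricted to $(s,t]$. The drift term $\int_{s}^{t}U(t,r)'B'\goth{m}\,dr$ is deterministic and hence trivially independent of $\mathcal{F}_{s}$; the key point is that, because $R\equiv B'$ does not depend on $\omega$, the Wiener/compensated-Poisson term and the large-jump term carry no randomness from before time $s$. I would approximate these two stochastic integrals by simple sums built from the increments of $W$ and from the jumps $\Delta L_{r}$, $r\in(s,t]$, recorded by $N$; each such sum is measurable with respect to $\sigma(L_{u}-L_{s}:s\le u\le t)$, which is independent of $\mathcal{F}_{s}$ by the independent-increments property of $L$. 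Since independence is preserved under convergence in probability, $Z_{s,t}$ is independent of $\mathcal{F}_{s}$. I expect this to be the main obstacle, as it is the only step that must descend to the approximating simple processes and verify that the deterministic integrand prevents any $\mathcal{F}_{s}$-measurable contribution from leaking in.

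Assertion (3) then follows quickly. By the flow property (1) applied to $X_{s+t}=\Gamma_{0,s+t}(\eta)=\Gamma_{s,s+t}\big(\Gamma_{0,s}(\eta)\big)=\Gamma_{s,s+t}(X_{s})$, I obtain $\inner{X_{s+t}}{\psi}=\inner{X_{s}}{U(s,s+t)\psi}+\inner{Z_{s,s+t}}{\psi}$, where the first summand is $\mathcal{F}_{s}$-measurable and the second is independent of $\mathcal{F}_{s}$ by (2). Conditioning on $\mathcal{F}_{s}$, pulling out the $\mathcal{F}_{s}$-measurable factor and using independence gives $\Exp\big(e^{i\inner{X_{s+t}}{\psi}}\mid\mathcal{F}_{s}\big)=e^{i\inner{X_{s}}{U(s,s+t)\psi}}\,H(s,t,\psi)$, with the deterministic factor $H(s,t,\psi)\defeq\Exp\big(e^{i\inner{Z_{s,s+t}}{\psi}}\big)$ being the Fourier transform of the increment.

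Finally, for assertion (4) I would argue that the conditional characteristic functional computed in (3) depends on $\mathcal{F}_{s}$ only through $X_{s}$, so that $\Exp\big(e^{i\inner{X_{s+t}}{\psi}}\mid\mathcal{F}_{s}\big)=\Exp\big(e^{i\inner{X_{s+t}}{\psi}}\mid X_{s}\big)$ for every $\psi\in\Psi$. Since $X$ is a regular process, the conditional law of $X_{s+t}$ is a Radon measure on $\Psi'$, and such measures are uniquely determined by their characteristic functionals $\psi\mapsto\Exp(e^{i\inner{\cdot}{\psi}})$; hence the two conditional laws coincide, yielding $\Exp\big(g(X_{s+t})\mid\mathcal{F}_{s}\big)=\Exp\big(g(X_{s+t})\mid X_{s}\big)$ on a measure-determining class of bounded functions $g$, and therefore the Markov property with respect to $(\mathcal{F}_{t})$. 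The only subtlety here is the invocation of uniqueness of characteristic functionals for Radon measures on $\Psi'$, which is precisely what the regularity of $X$ makes available.
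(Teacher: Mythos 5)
Your proposal is correct and follows essentially the same route as the paper: part (1) via the evolution-system identity together with Proposition \ref{propActiLineOperLevyInteg} and additivity over adjacent intervals, part (2) by reducing to the terms of the L\'{e}vy--It\^{o} decomposition and the independent increments of $L$ (the paper likewise defers the martingale part to the approximation by simple integrands in the underlying It\^{o}-type theory), part (3) by writing $X_{s+t}=\Gamma_{s,s+t}(X_s)$ and factoring the conditional expectation, and part (4) as a direct consequence. Your added justification in (4) that regularity makes the conditional laws Radon and hence determined by their characteristic functionals is a useful explicit filling-in of a step the paper leaves implicit, and your form of the identity in (3), with $e^{i\inner{X_s}{U(s,s+t)\psi}}$ rather than its expectation on the right-hand side, is the correct one.
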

\begin{proof} We adapt to our context the arguments used in the proof of  Proposition 4.1 in \cite{Applebaum:2006} and Theorem 2.2 in \cite{BojdeckiGorostiza:1991}.

To prove (1). Let $0 \leq r \leq s \leq t < \infty$. From the semigroup property of the forward evolution system $(U(t,s)': 0 \leq s \leq t< \infty)$ and Proposition \ref{propActiLineOperLevyInteg} we have
\begin{flalign*}
& \Gamma_{s,t}(\Gamma_{r,s} (\phi)) \\
& =  U(t,s)' \, \Gamma_{r,s} (\phi) + \int_{s}^{t} \int_{\Phi'} \, U(t,u)' B' \, L(du,df) \\
& =  U(t,s)' U(s,r)' \phi + U(t,s)' \int_{r}^{s} \int_{\Phi'} \, U(s,u)'B' \, L(du,df) + \int_{s}^{t} \int_{\Phi'} \, U(t,u)'B' \, L(du,df) \\
& =  U(t,r)' \phi + \int_{r}^{s} \int_{\Phi'} \, U(t,u)'B' \, L(du,df) + \int_{s}^{t} \int_{\Phi'} \, U(t,u)'B' \, L(du,df) \\
& =  U(t,r)' \phi + \int_{r}^{t} \int_{\Phi'} \, U(t,u)'B' \, L(du,df) 
= \Gamma_{r,t}(\phi). 
\end{flalign*}

To prove (2), let  $0 \leq s \leq t < \infty$. 
Observe that since $\int_{s}^{t} \int_{\Phi'} \, U(t,r)'B' \, L(dr,df)$ is regular and in view of \eqref{eqWeakStrongCompStochConvLevy},
to show that it is independent of $\mathcal{F}_{s}$ it is sufficient to check that for each $\psi \in \Psi$, 
$\int_{s}^{t} \int_{\Phi'} \, B \, U(t,r)\psi \, L(dr,df)$ is independent of $\mathcal{F}_{s}$. In effect, the first and third stochastic integrals in  the right-hand side of \eqref{eqDefiWeakStochConvLevy} are easily seem to by independent of $\mathcal{F}_{s}$ by the independent increments of $L$. The second  stochastic integral in  the right-hand side of \eqref{eqDefiWeakStochConvLevy} was defined via an It\^{o} type integration theory (see Section in \cite{FonsecaMora:2018-1}) and then standard arguments and the independent increments of $L$ show that these stochastic integrals are independent of $\mathcal{F}_{s}$. This shows (2). 

To prove (3), let $\psi \in \Psi$ and $s,t \geq 0$. 
From \eqref{eqDefMildSoluSEEOU}  we have $X_{t}=\Gamma_{0,t}(\eta)$ and then from part (1) we have $X_{t+s}=\Gamma_{s,s+t} \circ \Gamma_{0,s}(\eta)=\Gamma_{s,s+t} (X_{s})$. Using the above identity and from part (2) we have that
\begin{flalign*}
& \Exp \left( e^{i \inner{X_{s+t}}{\psi}} \, \vline \, \mathcal{F}_{s}  \right) \\
& =  \Exp \left( e^{i \inner{\Gamma_{s,s+t} (X_{s})}{\psi}}  \, \vline \, \mathcal{F}_{s} \right) \\
& =  \Exp \left( \exp \left\{ i \inner{ U(s+t,s)'X_{s}+\int_{s}^{s+t}\int_{\Phi'} \, U(s+t,r)' B' \, L(dr,df) }{\psi} \right\} \, \vline \, \mathcal{F}_{s} \right)\\
& =  \Exp \left( e^{i \inner{U(s+t,s)' X_{s}}{\psi}}  \right) \cdot \Exp \left( \exp \left\{ i \, \int_{s}^{s+t}\int_{\Phi'} \, B U(r,s+t) \psi \, L(dr,df)  \right\} \right) \\
& =  \Exp \left( e^{i \inner{X_{s}}{U(s,s+t) \psi}}  \right) H(s,t,\psi).
\end{flalign*} 
Finally, (4) is a direct consequence of (3). 
\end{proof}

\subsection{Time Regularity of Solutions}\label{subSectTimeRegu}

In this section we give sufficient conditions for the existence and uniqueness of a c\`{a}dl\`{a}g solution to stochastic evolution equations of the form \eqref{eqGenLanEquaLevy}. We will need the following result on existence of some types of stochastic convolution integrals whose paths are continuous.  

\begin{lemma}\label{lemmaDefiRandomIntegViaRegula}
Let $X=(X_{t}: t \geq 0)$ be a $\Psi'$-valued adapted process with c\`{a}dl\`{a}g paths. Consider a family $(G(s,t): 0 \leq s \leq t < \infty) \subseteq \mathcal{L}(\Psi,\Psi)$ with the property that for every $\psi \in \Psi$, $s,t \geq 0$, the mappings $[s,\infty) \ni r \mapsto G(s,r)\psi$ and $[0,t] \ni r \mapsto G(r,t)\psi$ are continuous. 

Then there exists a unique $\Psi'$-valued regular adapted process $\int_{0}^{t} G(t,s)' X_{s} ds$, $t \geq 0$, with continuous paths and   satisfying $\Prob$-a.e.  
\begin{equation}\label{eqDefiRandomIntegViaRegulari}
\inner{\int_{0}^{t} \, G(t,s)' X_{s} \, ds}{\psi}=\int_{0}^{t} \inner{X_{s}}{G(s,t)\psi} ds, \quad \forall t \geq 0, \, \psi \in \Psi. 
\end{equation}
\end{lemma}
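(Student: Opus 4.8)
The plan is to realise the desired process as the $\Psi'$-valued regularisation of the cylindrical process
$$ Z_{t}(\psi)(\omega) = \int_{0}^{t} \inner{X_{s}(\omega)}{G(s,t)\psi} \, ds, \quad \psi \in \Psi, $$
and then to read off path regularity and uniqueness from the regularisation theorem. First I would check that $Z_{t}(\psi)$ is well defined. Since $X$ is regular, adapted and c\`{a}dl\`{a}g, Proposition \ref{propCadlagImplyASBochner} shows that $X$ has almost surely locally Bochner integrable trajectories, so for $\Prob$-a.e. $\omega$ and every $T>0$ there is a continuous Hilbertian seminorm $p=p(T,\omega)$ on $\Psi$ with $M\defeq\sup_{0\le s\le T}p'(X_{s}(\omega))<\infty$. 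Combined with the continuity of $s\mapsto G(s,t)\psi$ on the compact interval $[0,t]$, this makes the integrand bounded, hence the integral finite; measurability in $(s,\omega)$ and $\mathcal{F}_{t}$-measurability of $Z_{t}(\psi)$ follow from the c\`{a}dl\`{a}g and adaptedness of $X$. The map $\psi\mapsto Z_{t}(\psi)$ is clearly linear, so $(Z_{t}:t\ge0)$ is an adapted cylindrical process in $\Psi'$.

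Next I would prove that, for each fixed $t$, the map $Z_{t}:\Psi\to L^{0}\ProbSpace$ is continuous, by the closed graph theorem (exactly as in the construction of the drift integral), using that $\Psi$ is ultrabornological and $L^{0}\ProbSpace$ is a complete metrisable topological vector space (Theorem 14.7.3 in \cite{NariciBeckenstein}). Suppose $\psi_{n}\to\psi$ in $\Psi$ and $Z_{t}(\psi_{n})\to\zeta$ in $L^{0}\ProbSpace$, and pass to a subsequence converging $\Prob$-a.e. For fixed $t$ the family $\{G(s,t):s\in[0,t]\}$ is pointwise bounded, since $s\mapsto G(s,t)\phi$ is a continuous map on the compact set $[0,t]$; as $\Psi$ is barrelled, the Banach--Steinhaus theorem (see \cite{NariciBeckenstein}) yields a continuous seminorm $q$ with $p(G(s,t)\phi)\le q(\phi)$ for all $s\in[0,t]$, $\phi\in\Psi$. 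Hence $\abs{\inner{X_{s}(\omega)}{G(s,t)\psi_{n}}}\le M\,q(\psi_{n})$, which is bounded in $n$, and the integrand converges pointwise in $s$; dominated convergence gives $Z_{t}(\psi_{n})(\omega)\to Z_{t}(\psi)(\omega)$ for a.e. $\omega$, so $\zeta=Z_{t}(\psi)$ $\Prob$-a.e. Thus $Z_{t}$ has closed graph and is continuous.

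The heart of the argument, and the step I expect to be the main obstacle, is the continuity of $t\mapsto Z_{t}(\psi)$, which I would establish pathwise on each $[0,T]$. Fixing a good $\omega$, $\psi\in\Psi$ and a sequence $t_{n}\to t$ in $[0,T]$, I write $Z_{t_{n}}(\psi)(\omega)=\int_{0}^{T}\ind{[0,t_{n}]}{s}\inner{X_{s}(\omega)}{G(s,t_{n})\psi}\,ds$. For $s\neq t$ the integrand converges to $\ind{[0,t]}{s}\inner{X_{s}(\omega)}{G(s,t)\psi}$, using the continuity of $r\mapsto G(s,r)\psi$ when $s<t$ and the vanishing indicator when $s>t$. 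To apply dominated convergence I need a bound on $p(G(s,t_{n})\psi)$ uniform in $s$ and $n$; this I obtain by a second application of Banach--Steinhaus, now to the family $\{G(s,r):(s,r)\in K\}$ over the compact fibred set $K=\{(s,r):r\in\{t_{n}:n\in\N\}\cup\{t\},\,0\le s\le r\}$, whose pointwise boundedness comes from the continuity of $(s,r)\mapsto G(s,r)\psi$ on $K$. The resulting estimate $\abs{\ind{[0,t_{n}]}{s}\inner{X_{s}(\omega)}{G(s,t_{n})\psi}}\le M\,C\,\ind{[0,T]}{s}$ supplies an integrable dominating function, and dominated convergence gives $Z_{t_{n}}(\psi)(\omega)\to Z_{t}(\psi)(\omega)$. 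This is the delicate point: mere separate continuity of $G$ does not on its own bound $\{G(s,r)\psi\}$ over the whole triangle, so it is precisely the joint continuity along the compact set $K$ (which is exactly the strong continuity available in the evolution-system applications) that drives the estimate.

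With $Z_{t}$ continuous for each $t$ and $t\mapsto Z_{t}(\psi)$ continuous, the regularisation theorem for ultrabornological spaces (Corollary 3.11 in \cite{FonsecaMora:2018}) produces a $\Psi'$-valued regular adapted process with continuous paths, which I denote $\int_{0}^{t}G(t,s)'X_{s}\,ds$, that is a version of $(Z_{t}:t\ge0)$; this is exactly the weak identity \eqref{eqDefiRandomIntegViaRegulari}. Finally, uniqueness is immediate: if two $\Psi'$-valued regular processes with continuous paths both satisfy \eqref{eqDefiRandomIntegViaRegulari}, then for every $\psi\in\Psi$ they agree $\Prob$-a.e. for all $t$, and since regular processes with continuous paths are determined by their weak evaluations (Proposition 2.12 in \cite{FonsecaMora:2018}), they are indistinguishable.
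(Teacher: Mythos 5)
Your proposal is correct and follows essentially the same route as the paper's proof: one defines the cylindrical process $Y_{t}(\psi)=\int_{0}^{t}\inner{X_{s}}{G(s,t)\psi}\,ds$, bounds it using that each c\`{a}dl\`{a}g path lies in some $D_{t}(\Psi'_{p})$ because $\Psi$ is barrelled, checks pathwise continuity in $t$, and applies the regularization theorem (Corollary 3.11 in \cite{FonsecaMora:2018}). The only deviations are minor: the paper obtains continuity of $Y_{t}:\Psi\to L^{0}\ProbSpace$ for free once it knows that $\psi\mapsto Y_{t}(\psi)(\omega)$ lies in $\Psi'$ for every $\omega$ (pointwise convergence implies convergence in probability, so your closed-graph detour is unnecessary), and the uniform bound on $p(G(s,r)\psi)$ over the triangle that you extract via Banach--Steinhaus is exactly the domination the paper leaves implicit when it appeals to ``strong continuity and dominated convergence'' in the $t$-continuity step.
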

\begin{proof}
We follow arguments used in the proof of Lemma 1.1 in \cite{FernandezGorostiza:1992}. 

Let $\psi \in \Psi$ and $t \geq 0$. Define $Y_{t}(\psi)(\omega) \defeq \int_{0}^{t} \inner{X_{s}(\omega)}{G(s,t)\psi} ds$. We will check that $Y_{t}(\psi)$ is a real-valued random variable. In effect, since $X$ is c\`{a}dl\`{a}g (for $\Prob$-a.e. $\omega \in \Omega$) we have that $s \mapsto X_{s}(\omega) \in D_{\infty}(\Psi')$ and hence from the continuity of the mapping $[0,t] \ni r \mapsto G(r,t)\psi$ we can conclude that the mapping $s \mapsto \mathbbm{1}_{[0,t]}(s) \inner{X_{s}(\omega)}{G(s,t)\psi} $ is c\`{a}dl\`{a}g on $[0,t]$. Hence, the integral $\int_{0}^{t} \inner{X_{s}(\omega)}{G(s,t)\psi} ds$ is well-defined  for $\Prob$-a.e. $\omega \in \Omega$. Moreover, Fubini's theorem shows that $\omega \mapsto Y_{t}(\psi)$ is $\mathcal{F}_{t}$-measurable. The process  $(Y_{t}: t \geq 0)$ is hence adapted. 

Now, given $\omega \in \Omega$ we have $s \mapsto X_{s}(\omega) \in D_{t}(\Psi')$ and because $\Psi$ is barrelled, there exists a continuous seminorm $p=p(t,\omega)$ on $\Psi$ such that $s \mapsto X_{s}(\omega) \in D_{t}(\Psi'_{p})$ (see Remark 3.6 in \cite{FonsecaMora:Skorokhod}). Since $[0,t] \ni r \mapsto G(r,t)\psi$ is continuous, the set $\{G(r,t)\psi: 0 \leq r \leq t \}$ is bounded in $\Psi$, hence under the seminorm $p$. Then, for each $\psi \in \Psi$, it follows that
$$ \abs{\int_{0}^{t} \inner{X_{s}(\omega)}{G(s,t)\psi} ds}  \leq  t \sup_{0 \leq s \leq t} p'(X_{s}(\omega)) \sup_{0 \leq s \leq t} p(G(s,t)).$$
We have therefore shown that for  each $t \geq 0$ and $\omega \in \Omega$, $\psi \mapsto Y_{t}(\psi)(\omega) \in \Psi'$. Hence, for each $t \geq 0$, $Y_{t}$ is a cylindrical random variable in $\Psi'$ such that the mapping $Y_{t}: \Psi \rightarrow L^{0}\ProbSpace$ is continuous. Observe moreover that for each $\omega \in \Omega$ and $\psi \in \Psi$, the mapping $t \mapsto Y_{t}(\psi)(\omega)$ is continuous. In effect, given $0 \leq r_{1} \leq r_{2}  \leq t$, we have 
\begin{flalign*}
& \abs{  \int_{0}^{r_{1}} \inner{X_{s}(\omega)}{G(s,r_{1})\psi} ds -  \int_{0}^{r_{2}} \inner{X_{s}(\omega)}{G(s,r_{2})\psi} ds}  \\
& \leq \int_{0}^{r_{1}} \abs{ \inner{X_{s}(\omega)}{G(s,r_{1})\psi- G(s,r_{2})\psi }} ds 
+ \int_{r_{1}}^{r_{2}} \inner{X_{s}(\omega)}{G(s,r_{2})\psi} ds,
\end{flalign*}
which converges to $0$ as $r_{1} \rightarrow r_{2}$ or $r_{2} \rightarrow r_{1}$ by using the strong continuity of the family $(G(s,t): 0 \leq s \leq t < \infty)$ and dominated convergence. The version of the regularization theorem for cylindrical processes in the dual of an ultrabornological nuclear space (Corollary 3.11 in \cite{FonsecaMora:2018}) shows the existence of a unique $\Psi'$-valued regular continuous process $\int_{0}^{t} G(t,s)' X_{s} ds$, $t \geq 0$, that is a version of $(Y_{t}: t \geq 0)$, i.e. satisfying \eqref{eqDefiRandomIntegViaRegulari} $\Prob$-a.e. Since $\int_{0}^{t} G(t,s)' X_{s} ds$, $t \geq 0$ is regular and weakly adapted it is (strongly) adapted. 
\end{proof}

The following theorem shows the existence of a unique c\`{a}dl\`{a}g version under the assumption that each $A(t)$  is a continuous linear operator on $\Psi$.

\begin{theorem}\label{theoCadlagVersiContiGenerator}
Suppose that $A(t) \in \mathcal{L}(\Psi,\Psi)$ for each $t \geq 0$ and the mapping $t \mapsto A(t) \psi$ is continuous from $[0,\infty)$ into $\Psi$ for every $\psi \in \Psi$. 
Then the Ornstein-Uhlenbeck process \eqref{eqDefMildSoluSEEOU}  has a unique (up to indistinguishable versions) $\Psi'$-valued regular c\`{a}dl\`{a}g version $(Z_{t}: t \geq 0)$ given by
\begin{equation} \label{eqDefiCadlagVersMildSoluLevy}
Z_{t}=U(t,0)'\eta+\int_{0}^{t} U(t,s)' A(s)' B' L_{s} ds + B'L_{t}, \quad \forall \, t \geq 0,
\end{equation} 
where $\int_{0}^{t} U(t,s)' A(t)' B' L_{s} ds$, $t \geq 0$, is a $\Psi'$-valued regular adapted process with continuous paths satisfying $\Prob$-a.e.  
\begin{equation}\label{eqDefiDetermIntegLevyMildSol}
\inner{\int_{0}^{t} U(t,s)' A(s)' B' L_{s} ds}{\psi}=\int_{0}^{t} \inner{L_{s}}{BA(s)U(s,t)\psi} ds, \quad \forall t \geq 0, \, \psi \in \Psi. 
\end{equation}
Moreover if $(L_{t}: t\geq 0)$ has continuous paths, then $(Z_{t}: t \geq 0)$ has continuous paths too. 
\end{theorem}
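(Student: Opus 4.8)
The plan is to build the pathwise-continuous time integral appearing in \eqref{eqDefiCadlagVersMildSoluLevy} from the regularization Lemma \ref{lemmaDefiRandomIntegViaRegula}, to identify the process $Z$ with the Ornstein--Uhlenbeck process \eqref{eqDefMildSoluSEEOU} through a stochastic integration by parts, and then to read off the path regularity directly from the three summands of \eqref{eqDefiCadlagVersMildSoluLevy}.

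\textbf{Step 1 (the continuous integral term).} I would apply Lemma \ref{lemmaDefiRandomIntegViaRegula} with the càdlàg adapted process $X_{s}=B'L_{s}$ and the family $G(s,t)=A(s)U(s,t)\in\mathcal{L}(\Psi,\Psi)$. Since $(A(s)U(s,t))'=U(t,s)'A(s)'$, the resulting process is exactly $\int_{0}^{t}U(t,s)'A(s)'B'L_{s}\,ds$ and \eqref{eqDefiRandomIntegViaRegulari} becomes \eqref{eqDefiDetermIntegLevyMildSol}. The only hypothesis of the lemma that is not immediate is the strong continuity of $G$: continuity of $r\mapsto A(s)U(s,r)\psi$ follows from the continuity of $A(s)$ and the strong continuity of $U$, whereas for $r\mapsto A(r)U(r,t)\psi$ I would split $A(r_{n})U(r_{n},t)\psi-A(r)U(r,t)\psi=A(r_{n})\big(U(r_{n},t)\psi-U(r,t)\psi\big)+\big(A(r_{n})-A(r)\big)U(r,t)\psi$. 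The second term vanishes by the assumed continuity of $r\mapsto A(r)\psi$; for the first term I would invoke the Banach--Steinhaus theorem on the barrelled space $\Psi$: the family $\{A(r):r\in[0,t]\}$ is pointwise bounded, since each $r\mapsto A(r)\psi$ is continuous on the compact $[0,t]$, hence equicontinuous, which kills the first term.

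\textbf{Step 2 (integration by parts --- the crux).} Fix $t>0$, $\psi\in\Psi$ and set $\phi(s):=BU(s,t)\psi\in\Phi$. Since $A(s)\in\mathcal{L}(\Psi,\Psi)$ we have $D_{t}=\Psi$, and the backward equation \eqref{eqBackwardEquationDeriv} together with $B\in\mathcal{L}(\Psi,\Phi)$ gives $\phi\in C^{1}([0,t];\Phi)$ with $\phi(t)=B\psi$ and $\phi'(s)=-BA(s)U(s,t)\psi$. I would then expand the weak stochastic convolution $\int_{0}^{t}\int_{\Phi'}\phi(s)\,L(ds,df)$ via its Lévy--Itô components \eqref{eqDefiWeakStochConvLevy} and substitute $\phi(s)=\phi(t)-\int_{s}^{t}\phi'(u)\,du$ inside each term. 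For the drift term and the large--jumps Poisson term the exchange of the $du$--integral with $ds$ (resp. with the finite random sum) is ordinary Fubini; for the martingale--valued measure term it is the stochastic Fubini theorem (Theorem 4.24 in \cite{FonsecaMora:2018-1}), whose hypotheses hold because $\phi(t)$ and $\phi'$ are deterministic and continuous, hence bounded, on $[0,t]$. Each term then yields a boundary contribution paired with $\phi(t)$ and a contribution $-\int_{0}^{t}\inner{\cdot}{\phi'(s)}\,ds$; recombining the three boundary terms and the three time--integral terms through the Lévy--Itô decomposition \eqref{eqLevyItoDecomposition} collapses them to $L_{t}$ and $L_{s}$, giving
\begin{equation*}
\int_{0}^{t}\int_{\Phi'}\phi(s)\,L(ds,df)=\inner{L_{t}}{B\psi}+\int_{0}^{t}\inner{L_{s}}{BA(s)U(s,t)\psi}\,ds.
\end{equation*}

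\textbf{Step 3 (identification and regularity).} Using the weak--strong compatibility \eqref{eqWeakStrongCompStochConvLevy} on the left and \eqref{eqDefiDetermIntegLevyMildSol} on the right, the previous display reads $\inner{\int_{0}^{t}\int_{\Phi'}U(t,s)'B'\,L(ds,df)}{\psi}=\inner{B'L_{t}}{\psi}+\inner{\int_{0}^{t}U(t,s)'A(s)'B'L_{s}\,ds}{\psi}$; adding $\inner{U(t,0)'\eta}{\psi}$ and recalling \eqref{eqDefMildSoluSEEOU} and \eqref{eqDefiCadlagVersMildSoluLevy} gives $\inner{X_{t}}{\psi}=\inner{Z_{t}}{\psi}$ $\Prob$-a.e. for every $\psi$. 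Both $X_{t}$ and $Z_{t}$ are regular (the three summands of $Z_{t}$ are the continuous regular process $U(t,0)'\eta$, the regular process from Step 1, and $B'L_{t}$, a continuous-linear image of the regular $L_{t}$), so $Z_{t}=X_{t}$ $\Prob$-a.e. by uniqueness of the regular version (Proposition 2.12 in \cite{FonsecaMora:2018}). Finally, the first summand of $Z$ is continuous by strong continuity of the forward system, the second is continuous by Step 1, and $B'L$ is càdlàg; hence $Z$ is a regular càdlàg version, continuous when $L$ is, and any two such versions, being càdlàg modifications of one another, are indistinguishable.

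The main obstacle is Step 2: turning the formal identity $\int_{0}^{t}\phi(s)\,dL_{s}=\inner{L_{t}}{\phi(t)}-\int_{0}^{t}\inner{L_{s}}{\phi'(s)}\,ds$ into a rigorous statement, in particular justifying the stochastic-Fubini exchange for the martingale part and checking that the three Lévy--Itô components recombine cleanly into $L_{s}$ via \eqref{eqLevyItoDecomposition}. The equicontinuity argument in Step 1 is the secondary technical point.
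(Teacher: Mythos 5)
Your proposal is correct, but the identification step (your Step 2) follows a genuinely different route from the paper. The paper does not prove the identity $\int_{0}^{t}\phi(s)\,dL_{s}=\inner{L_{t}}{\phi(t)}-\int_{0}^{t}\inner{L_{s}}{\phi'(s)}\,ds$ directly; instead it verifies by a pathwise computation (using \eqref{eqDefiDetermIntegLevyMildSol}, the forward equation \eqref{eqForwardEquation} and ordinary Fubini) that $Z$ is a weak solution of the Langevin equation \eqref{eqGenLanEquaLevy}, and then invokes the uniqueness theorem (Theorem \ref{theoUniqueSoluSEELevy}) together with Proposition \ref{propCadlagImplyASBochner} (c\`{a}dl\`{a}g $\Rightarrow$ almost surely locally Bochner integrable trajectories) to conclude that $Z$ is a version of the mild solution \eqref{eqDefMildSoluSEEOU}. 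Your integration-by-parts argument instead identifies $\inner{X_{t}}{\psi}$ with $\inner{Z_{t}}{\psi}$ directly, component by component in the L\'{e}vy--It\^{o} decomposition; this is rigorous and is in fact the same mechanism (termwise Fubini, stochastic Fubini for the martingale-valued-measure part, and the backward/forward equations) that the paper deploys in Lemma \ref{lemmFubiniStochConvLevy}, just applied to the deterministic integrand $\phi(s)=BU(s,t)\psi$ rather than to the convolution kernel. What your route buys is independence from the general uniqueness machinery of Section 4 (Theorem \ref{theoUniqueSoluSEELevy}), at the cost of re-justifying the stochastic Fubini exchange; what the paper's route buys is that the weak-solution property of $Z$ is established as a byproduct, which is then reused verbatim. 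Two further points in your favour: your application of Lemma \ref{lemmaDefiRandomIntegViaRegula} with $X_{s}=B'L_{s}$ and $G(s,t)=A(s)U(s,t)\in\mathcal{L}(\Psi,\Psi)$ matches the hypotheses of that lemma exactly (the paper applies it, slightly loosely, to the $\Phi'$-valued $L_{s}$ with $G(s,t)=BA(s)U(s,t)$ mapping $\Psi$ into $\Phi$), and your Banach--Steinhaus/equicontinuity argument supplies the strong continuity of $r\mapsto A(r)U(r,t)\psi$ that the paper asserts without proof. No gaps.
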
 
\begin{proof}
We define the stochastic integral $\int_{0}^{t} U(t,s)' A(t)' B'  L_{s} ds$, by applying Lemma \ref{lemmaDefiRandomIntegViaRegula} to the $\Psi'$-valued adapted c\`{a}dl\`{a}g process $X_{t}=L_{t}$ and to the family $G(s,t)=B A(s)U(s,t)$ which by our assumptions is strongly continuous in each of its variables.  Lemma \ref{lemmaDefiRandomIntegViaRegula} then shows that   $\int_{0}^{t} U(t,s)' A(t)' B' L_{s} ds$, $t \geq 0$, is a $\Psi'$-valued regular adapted process with continuous paths satisfying \eqref{eqDefiDetermIntegLevyMildSol} $\Prob$-a.e.  

Now observe that since $B' \in \mathcal{L}(\Phi',\Psi')$, then $(B'L_{t}: t \geq 0)$ is a $\Psi'$-valued regular adapted process with c\`{a}dl\`{a}g paths. Likewise, the strong continuity of the forward evolution system $(U(t,s)': 0 \leq s \leq t)$ shows that $(U(t,0)'\eta: t \geq 0)$ is a $\Psi'$-valued regular adapted process with continuous paths. Then $(Z_{t}: t \geq 0)$ defined by \eqref{eqDefiCadlagVersMildSoluLevy} is a $\Psi'$-valued regular adapted process with c\`{a}dl\`{a}g paths. It is clear that if $(L_{t}: t\geq 0)$ has continuous paths the same is satisfied for $(B'L_{t}: t \geq 0)$ and hence $(Z_{t}: t \geq 0)$ has continuous paths by the arguments given above. 

To show that $(Z_{t}: t \geq 0)$ is a version of the Ornstein-Uhlenbeck process \eqref{eqDefMildSoluSEEOU}  it is sufficient to show that $(Z_{t}: t \geq 0)$ is a weak solution to \eqref{eqGenLanEquaLevy}. In such a case, Theorem \ref{theoUniqueSoluSEELevy} together with Proposition \ref{propCadlagImplyASBochner} shows that 
$(Z_{t}: t \geq 0)$ is the unique c\`{a}dl\`{a}g version of  \eqref{eqDefMildSoluSEEOU}. 

Let $t \geq 0$ and $\psi \in \Psi$. Using 
 \eqref{eqDefiCadlagVersMildSoluLevy}, \eqref{eqDefiDetermIntegLevyMildSol}, then \eqref{eqForwardEquation} and finally using again \eqref{eqDefiCadlagVersMildSoluLevy} we have $\Prob$-a.e.
 \begin{eqnarray*} 
\inner{Z_{t}}{\psi} & = & 
\inner{U(t,0)'\eta}{\psi}+\inner{\int_{0}^{t} U(t,s)' A(s)' B' L_{s} ds}{\psi} + \inner{B' L_{t}}{\psi}\\
& = & \inner{\eta}{U(0,t)\psi}+\int_{0}^{t}   \inner{L_{s}}{B A(s)U(s,t) \psi} ds + \inner{B' L_{t}}{\psi}\\
&= &  \inner{\eta}{\psi} + \int_{0}^{t} \inner{\eta}{U(0,r)A(r) \psi} dr + \inner{B' L_{t}}{\psi}\\
& {} & +\int_{0}^{t}  \left( \inner{L_{s}}{B A(s) \psi}
+ \int_{s}^{t} \inner{L_{s}}{B A(s)U(s,r)A(r) \psi} dr
 \right) ds   \\
& = & \inner{\eta}{\psi} + \inner{B' L_{t}}{\psi} \\
& {} & +\int_{0}^{t} \left( \inner{U(r,0)'\eta}{A(r) \psi} +\inner{L_{r}}{BA(r) \psi} + \int_{0}^{r}  \inner{L_{s}}{BA(s)U(s,r)A(r) \psi} ds \right) dr \\
& = & \inner{\eta}{\psi} + \inner{B' L_{t}}{\psi} +\int_{0}^{t} \inner{Z_{r}}{A(r)\psi} dr.
\end{eqnarray*} 
Thus $(Z_{t}: t \geq 0)$ is a weak solution to \eqref{eqGenLanEquaLevy}.  
\end{proof}

\section{Examples and Applications}\label{secExamApplica}

In this section we consider examples and applications of our theory of  stochastic evolutions equations developed in the last two sections. 

\begin{example}
Let $\Phi=\Psi=\mathcal{S}(\R)$ be the Schwartz space of rapidly decreasing functions. Let $L=(L_{t}:t\geq 0)$ be a L\'{e}vy process in the space of tempered distributions $\mathcal{S}(\R)'$ and $B \in \mathcal{L}(\mathcal{S}(\R),\mathcal{S}(\R))$. 
Consider the stochastic evolution equation on  $\mathcal{S}(\R)'$:
\begin{equation}\label{eqDilationSEELevy}
d Y_{t}=A' Y_{t}+B' dL_{t}, \quad t \geq 0,
\end{equation}
with initial condition $Y_{0}=\eta$, for a  $\mathcal{S}(\R^{d})'$-valued $\mathcal{F}_{0}$-measurable random variable, and here  $A$ is the continuous linear operator on $\mathcal{S}(\R)$ defined by $(A\psi)(x) = x \psi' (x)$, for all $x \in \R$, $\psi \in \mathcal{S}(\R)$. It is shown in Section 6 in  \cite{Babalola:1974} that $A$ is the infinitesimal generator of the $(C_{0},1)$-semigroup on $\mathcal{S}(\R)$ defined by $(S(t)\psi)(x) = \psi(e^{tx})$, for all  $t \geq 0$, $x \in \R$, $\psi \in \mathcal{S}(\R)$. 

Then Theorem \ref{theoCadlagVersiContiGenerator} shows that \eqref{eqHeatSEELevy} has a unique c\`{a}dl\`{a}g solution $(Z_{t}: t \geq 0)$ satisfying for any $t \geq 0$ and $\psi \in   \mathcal{S}(\R^{d})$:
\begin{equation}
\inner{Z_{t}}{\psi}= \inner{\eta}{\hat{\psi}_{t}} 
+\int_{0}^{t} \inner{L_{s}}{B\tilde{\psi}_{t-s})} \, ds + \inner{L_{s}}{B \psi}, 
\end{equation}
where $\hat{\psi}_{t}(x)\defeq (S(t)\psi)(x) =\psi(e^{tx})$ and $\tilde{\psi}_{t}(x) \defeq AS(t)\psi(x)=xte^{tx} \psi'(e^{tx})$ for every $t \geq 0$ and $x \in \R$. The process $(Z_{t}: t \geq 0)$ is Markov by Proposition \ref{propOrnsUhleIsMarkov}.
\end{example}

\begin{example}
Let $\Phi=\Psi=\mathcal{S}(\R^{d})$ be the Schwartz space of rapidly decreasing functions, $d \geq 1$. Let $L=(L_{t}:t\geq 0)$ be a L\'{e}vy process in the space of tempered distributions $\mathcal{S}(\R^{d})'$ and $B \in \mathcal{L}(\mathcal{S}(\R^{d}),\mathcal{S}(\R^{d}))$. 
Consider the stochastic evolution equation on  $\mathcal{S}(\R^{d})'$:
\begin{equation}\label{eqHeatSEELevy}
d Y_{t}=\Delta Y_{t}+B' dL_{t}, \quad t \geq 0,
\end{equation}
with initial condition $Y_{0}=\eta$, for a  $\mathcal{S}(\R^{d})'$-valued $\mathcal{F}_{0}$-measurable random variable, and here  $\Delta$ is the Laplace operator on $\mathcal{S}(\R^{d})'$. 

It is well-known that the Laplace operator $\Delta$ is the infinitesimal generator of the \emph{heat semigroup}  $(S(t): t \geq 0)$ which is the $C_{0}$-semigroup on $\mathcal{S}(\R^{d})$ defined as: $S(0)=I$ and for each $t>0$, 
\begin{equation*}
(S(t)\psi)(x) \defeq \frac{1}{(4 \pi t)^{d/2}}  \int_{\R^{d}} e^{-\norm{x-y}^{2}/4t} \psi(y) dy, \quad \forall \psi \in  \mathcal{S}(\R^{d}), \, x \in \R^{d}. 
\end{equation*}
If we let $\mu_{t}(x)= \frac{1}{(4 \pi t)^{d/2}}  e^{-\norm{x}^{2}/4t}$, then $\mu_{t} \in \mathcal{S}(\R^{d})$ and $S(t)\psi=\mu_{t} \ast \psi$ for each $\psi \in \mathcal{S}(\R^{d})$. 
 
The heat semigroup is equicontinuous hence a $(C_{0},1)$-semigroup. Since $\Delta \in \mathcal{L}(\mathcal{S}(\R^{d}),\mathcal{S}(\R^{d}))$, Theorem \ref{theoCadlagVersiContiGenerator} shows that \eqref{eqHeatSEELevy} has a unique c\`{a}dl\`{a}g solution $(Z_{t}: t \geq 0)$ satisfying for any $t \geq 0$ and $\psi \in   \mathcal{S}(\R^{d})$:
\begin{equation}\label{eqSoluHeatEquaLevy}
\inner{Z_{t}}{\psi}= \inner{\eta}{\mu_{t} \ast \psi} 
+\int_{0}^{t} \inner{L_{s}}{B \Delta (\mu_{t-s} \ast \psi)} \, ds + \inner{L_{s}}{B \psi}. 
\end{equation}
The existence of solutions to \eqref{eqHeatSEELevy} in the case where $L$ is a Wiener process was studied by \"{U}st\"{u}nel in \cite{Ustunel:1982-2} by following a different approach. It is worth to mention however that our solution \eqref{eqSoluHeatEquaLevy} to \eqref{eqHeatSEELevy} coincides with that obtained in \cite{Ustunel:1982-2}. 
\end{example}

\begin{example}
In this example we provide a situation in which we have a  Langevin problem defined on a Hilbert space with cylindrical L\'{e}vy noise and an appropriate nuclear space $\Phi$ can be constructed such that the Langevin problem  is solved in the dual space $\Phi'$. 

Let $(H, \inner{\cdot}{\cdot}_{H})$ be a separable Hilbert space and $-J$ be a closed densely defined self-adjoint operator on $H$ such that $\inner{-J \phi}{\phi}_{H} \leq 0$ for each $\phi \in \mbox{Dom}(J)$. Let $(T(t): t \geq 0)$ be the $C_{0}$-contraction semigroup on $H$ generated by $-J$. Assume moreover that there exists some $r_{1}$ such that $(\lambda I+ J)^{-r_{1}}$ is Hilbert-Schmidt. Given these conditions, it is known (see \cite{KallianpurXiong}, Example 1.3.2) that one can construct a   Fr\'{e}chet nuclear space $\Phi$, which is continuously embedded in $H$ and whose topology is determined by an increasing family of Hilbertian norms $\abs{\cdot}_{n}$, $n \geq 0$. This family of norms is such that  $(T(t): t \geq 0)$ restricts to a equicontinuous $C_{0}$-semigroup $(S(t): t \geq 0)$ on $\Phi$, i.e. $\abs{S(t)\phi}_{n} \leq \abs{\phi}_{n}$, $n \geq 0$. Moreover, the restriction $A$ of $-L$ to $\Phi$  is the infinitesimal generator of $(S(t): t \geq 0)$ on $\Phi$ and $A \in \mathcal{L}(\Phi,\Phi)$. 

Now, suppose that $Z=(Z_{t}: t\geq 0)$ is a cylindrical L\'{e}vy process in $H$ such that for each $t \geq 0$ the mapping $L_{t}: H \rightarrow L^{0} \ProbSpace$ is continuous. Then Example 6.2 in \cite{FonsecaMora:ReguLCS} shows that there exists a $\Phi'$-valued c\`{a}dl\`{a}g regular L\'{e}vy process $L=(L_{t}: t\geq 0)$ that is a version of $Z=(Z_{t}: t\geq 0)$.    

This way, the Langevin equation on $H$ with (cylindrical) L\'{e}vy noise,
$$ d Y_{t}=-J Y_{t} dt + dZ_{t}, \quad Y_{0}=y_{0}, $$
can be reinterpreted as the Langevin equation on $\Phi'$ with (genuine) L\'{e}vy noise,
$$ d Y_{t}=A Y_{t} dt + dL_{t}, \quad Y_{0}=y_{0}, $$
for which we know from  Theorem \ref{theoCadlagVersiContiGenerator} that a unique $\Phi'$-valued c\`{a}dl\`{a}g solution $(Z_{t}: t \geq 0)$ exists and is given by \eqref{eqDefiCadlagVersMildSoluLevy} (for $B=I$). 

The reader interested in the study of Langevin equations driven by a cylindrical L\'{e}vy process in a Hilbert space  is referred to \cite{KumarRiedle:2020} where sufficient  conditions for the existence and uniqueness of solutions are discussed. 
\end{example} 

The examples given above illustrate applications of Theorem \ref{theoCadlagVersiContiGenerator} in the case where the linear part of the stochastic evolution equation is time-homogeneous, i.e. for $A(t)=A$ $\forall t \geq 0$, where $A \in \mathcal{L}(\Psi,\Psi)$ is the generator of a $(C_{0},1)$-semigroup. 

We now discuss a class of examples in the time-inhomogeneous case under the assumption that each $A(t)$ is the generator of a $(C_{0},1)$-semigroup. We will restrict ourself to the case where  $\Psi$ is a Fr\'{e}chet nuclear space where can make usage of the theory of Kallianpur and P\'{e}rez-Abreu introduced in \cite{KallianpurPerezAbreu:1988} for the existence of a $(C_{0},1)$-backward evolution system on $\Psi$ generated by the family $(A(t): t \geq 0)$.

\begin{example}\label{examplePertubEquations}
Let  $\Psi$ be a Fr\'{e}chet nuclear space. 
Consider a family $A=(A(t):t \geq 0) \subseteq \mathcal{L}(\Psi,\Psi)$ wherein each $A(t)$ is the generator of a  $(C_{0},1)$-semigroup $(S_{t}(s): s \geq 0)$ on $\Phi$. Assume that for the family $A$ there exists an increasing sequence $(q_{n}: n \geq 0)$ of norms generating the topology on $\Psi$ such that the following two conditions hold: 
\begin{enumerate}
\item For every $k \geq 0$, there exists $m \geq k$ such that, for each $t \geq 0$, $A(t)$ has a continuous linear extension form $\Psi_{q_{m}}$ into $\Psi_{q_{k}}$ (also denoted by $A(t)$) and the mapping $t \mapsto A(t)$ is $\mathcal{L}(\Psi_{q_{m}}, \Psi_{q_{k}})$-continuous.

\item The family $A$ is \emph{stable} with  respect to $(q_{n}: n \geq 0)$, i.e. for each $T>0$ there exists $k_{0} \geq 0$ and for $k \geq k_{0}$ there are constants $M_{k}=M_{k}(T) \geq 1$ and $\sigma_{k}= \sigma_{k}(T)$ satisfying the condition:
\begin{equation}\label{eqDefiStableGenerators}
q_{k} \left(S_{t_{1}}(s_{1})S_{t_{2}}(s_{2}) \cdots S_{t_{m}}(s_{m}) \psi \right) \leq M_{k} \exp \left(  \sigma_{k} \sum_{j=1}^{m} s_{j} \right) q_{k}(\psi), \quad \forall \psi \in \Psi, \, s_{j} \geq 0, 
\end{equation} 
whenever $0 \leq t_{1} \leq t_{2} \leq \cdots \leq t_{m} \leq T$, $m \geq 0$. 
\end{enumerate}

By Theorem 1.3 in \cite{KallianpurPerezAbreu:1988} there exists a unique $(C_{0},1)$-backward evolution system $(U(s,t): 0 \leq s \leq t < \infty)$ on $\Psi$ which is generated by the family $(A(t):t \geq 0)$. 

Suppose now that we have a family $(D(t):t \geq 0) \subseteq \mathcal{L}(\Psi,\Psi)$ for which there exists $k_{0} \geq 0$  and,  for $k \geq k_{0}$ ant $t \geq 0$, $D(t)$ has a continuous linear extension form $\Psi_{q_{k}}$ into $\Psi_{q_{k}}$ (also denoted by $D(t)$) and the mapping $t \mapsto D(t)$ is $\mathcal{L}(\Psi_{q_{k}}, \Psi_{q_{k}})$-continuous.
With the same conditions as above for the family $(A_{t}:t \geq 0)$, Theorem 1.4 in \cite{KallianpurPerezAbreu:1988} shows the existence of a unique $(C_{0},1)$-backward evolution system $(V(s,t): 0 \leq s \leq t < \infty)$ on $\Psi$ which is generated by the family $(A(t)+D(t):t \geq 0)$. Moreover $(V(s,t): 0 \leq s \leq t < \infty)$ satisfies the integral equation
$$ V(s,t)\psi=U(s,t)\psi+\int_{s}^{t} \, U(s,r) D(r) V(r,t) \psi \, dr, \quad \forall t \geq 0, \, \psi \in \Psi.$$

Now if $L=(L_{t}: t \geq 0)$ is a L\'{e}vy process in a quasi-complete bornological nuclear space $\Phi$, $\eta$ is a $\mathcal{F}_{0}$-measurable random variable in $\Psi$, $B \in \mathcal{L}(\Psi,\Phi)$, with families $(A(t): t \geq 0)$ and $(D(t): t \geq 0)$ as above, the perturbed stochastic evolution equation  
\begin{equation}\label{eqGenLanEquaLevyPerturbed}
d Y_{t}=(A(t)' Y_{t}+D(t)' Y_{t})dt + B' dL_{t}, \quad t \geq 0, 
\end{equation}
with initial condition $Y_{0}=\eta$ $\Prob$-a.e., has by Theorem \ref{theoCadlagVersiContiGenerator} a unique c\`{a}dl\`{a}g weak solution $(Z_{t}: t \geq 0)$ given by 
$$ Z_{t}=V(t,0)'\eta+\int_{0}^{t} V(t,s)' (A(s)+D(s))' B' L_{s} ds + B'L_{t}, \quad \forall \, t \geq 0.$$
Furthermore, if the initial condition $\eta$ and the L\'{e}vy process $L$ are square integrable, we have by Theorem \ref{theoExisUniqStronSquaIntegSolution} that for every $t>0$ there exists a continuous Hilbertian seminorm $\varrho$ on $\Psi$ such that $\Exp \int_{0}^{t} \, \varrho'(Z_{s})^{2} ds < \infty$. 

It is worth to mention that the above result extends to the L\'{e}vy noise case the results of Theorems 2.1 and 2.3 in  \cite{KallianpurPerezAbreu:1988}, there formulated for square integrable martingale noise under the assumption that the underlying space if nuclear Fr\'{e}chet. 

For an example of families  $(A(t): t \geq 0)$ and $(D(t): t \geq 0)$ satisfying the conditions given above in this example see see Section 3 in \cite{KallianpurPerezAbreu:1988}, p.264-271. The example in \cite{KallianpurPerezAbreu:1988} points out that perturbed stochastic evolution equations of the form  \eqref{eqGenLanEquaLevyPerturbed} arise in the study of the fluctuation limit of a sequence of interacting diffusions (see \cite{HitsudaMitoma:1986, Mitoma:1985}).   
\end{example}

\section{Weak Convergence of Solutions to Stochastic Evolution Equations}\label{weakConvSEE}

In this section we give sufficient conditions for the weak convergence of a sequence of Ornstein-Uhlenbeck processes. The problem is described as follows. 

For each $n=0,1,2,\dots$, let $L^{n}=(L^{n}_{t}: t \geq 0)$ be a $\Phi'$-valued L\'{e}vy process with c\`{a}dl\`{a}g paths, $(U^{n}(s,t): 0 \leq s \leq t)$ a backward evolution system with family of generators $(A^{n}(t): t \geq 0) \subseteq  \mathcal{L}(\Psi,\Psi)$ satisfying that the mapping $t \mapsto A^{n}(t) \psi$ is continuous from $[0,\infty)$ into $\Psi$ for every $\psi \in \Psi$, $\eta^{n}$ is a $\mathcal{F}_{0}$-measurable $\Psi'$-valued regular random variable, and $B^{n} \in \mathcal{L}(\Psi,\Phi)$.

In view of Theorem \ref{theoCadlagVersiContiGenerator} the Ornstein-Uhlenbeck $X^{n}=(X^{n}_{t}: t \geq 0)$ corresponding to the generalized Langevin equation 
\begin{equation}\label{eqDefiSequenGenLangEqua}
dY^{n}_{t}=A^{n}(t)'Y^{n} dt+ (B^{n})' dL^{n}_{t}, \quad Y^{n}_{0}=\eta^{n}_{0},
\end{equation} 
has a $\Psi'$-valued c\`{a}dl\`{a}g regular version which is given by 
\begin{equation} \label{eqDefiOrnUhlCadlagWeakConverg}
X^{n}_{t}=U^{n}(t,0)'\eta^{n} +\int_{0}^{t} U^{n}(t,s)' A^{n}(s)' (B^{n})' L^{n}_{s} ds + (B^{n})' L^{n}_{t}, \quad \forall \, t \geq 0. 
\end{equation} 
Observe that because $\Psi$ is ultrabornological, Corollary 4.8 in \cite{FonsecaMora:Skorokhod} shows that for each $n=0,1,2,\dots$ the processes $ (B^{n})' L^{n}=( (B^{n})' L^{n}_{t}: t \geq 0)$ and $X^{n}=(X^{n}_{t}: t \geq 0)$ define each a random variable in $D_{\infty}(\Psi')$ with a Radon distribution. 

In the next theorem we give sufficient conditions for the weak convergence of the sequence of Ornstein-Uhlenbeck processes $X^{n}$ to $X^{0}$ in $D_{\infty}(\Psi')$. 

\begin{theorem}\label{theoWeakConvOrnsUhleProce} Assume the following:
\begin{enumerate}
\item For $n=0,1,2,\dots$, $\eta^{n}$ is independent of $L^{n}$. 
\item $\eta^{n} \Rightarrow \eta^{0}$. 
\item $U^{n}(0,t) \psi \rightarrow U^{0}(0,t) \psi$ as $n \rightarrow \infty$ for each $\psi \in \Psi$ and $t \geq 0$, and $A^{n}(s) U^{n}(s,t) \psi \rightarrow A^{0}(s) U^{0}(s,t)$ as $n \rightarrow \infty$ for each $\psi \in \Psi$ uniformly for $(s,t)$ in bounded intervals of time. 
\item $(B^{n})'L^{n} \Rightarrow (B^{0})' L^{0}$ in $D_{\infty}(\Psi')$ as $n \rightarrow \infty$. 
\end{enumerate}
Then the sequence $(X^{n}:n \in \N)$ is uniformly tight  in $D_{\infty}(\Psi')$ and $X^{n} \Rightarrow X^{0}$ in $D_{\infty}(\Psi')$ as $n \rightarrow \infty$.
\end{theorem}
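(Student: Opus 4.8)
The plan is to realise each solution as the image of its data under a ``solution map'' and then to run a continuous–mapping argument for the asymptotically convergent family of these maps. Combining Theorem \ref{theoCadlagVersiContiGenerator} with the weak characterisation \eqref{eqDefiDetermIntegLevyMildSol}, for every $\psi \in \Psi$ and $t \geq 0$ we have $\Prob$-a.e.
\[
\inner{X^{n}_{t}}{\psi} = \inner{\eta^{n}}{U^{n}(0,t)\psi} + \int_{0}^{t} \inner{(B^{n})'L^{n}_{s}}{A^{n}(s)U^{n}(s,t)\psi}\, ds + \inner{(B^{n})'L^{n}_{t}}{\psi}.
\]
This exhibits $X^{n}=\Theta_{n}(\eta^{n},(B^{n})'L^{n})$ for a measurable map $\Theta_{n}\colon \Psi' \times D_{\infty}(\Psi') \to D_{\infty}(\Psi')$. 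The crucial structural observation is that the last term is simply the evaluation of the driving path $(B^{n})'L^{n}$, so all the jumps of $X^{n}$ come from the driver, while the first two terms are continuous in $t$.

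First I would establish the joint weak convergence $(\eta^{n},(B^{n})'L^{n}) \Rightarrow (\eta^{0},(B^{0})'L^{0})$ in $\Psi' \times D_{\infty}(\Psi')$. By hypothesis (1) the joint law factorises as the product of its marginals for every $n$, including $n=0$; since product measures converge weakly when their marginals do, this is immediate from hypotheses (2) and (4).

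The key step is the asymptotic continuity of the solution maps: I would show that whenever $(\zeta^{n},x^{n}) \to (\zeta^{0},x^{0})$ in $\Psi' \times D_{\infty}(\Psi')$ one has $\Theta_{n}(\zeta^{n},x^{n}) \to \Theta_{0}(\zeta^{0},x^{0})$ in $D_{\infty}(\Psi')$. Writing $\Theta_{n}(\zeta^{n},x^{n})_{t}=C^{n}_{t}+x^{n}_{t}$ with $\inner{C^{n}_{t}}{\psi}=\inner{\zeta^{n}}{U^{n}(0,t)\psi}+\int_{0}^{t}\inner{x^{n}_{s}}{A^{n}(s)U^{n}(s,t)\psi}\,ds$, the first summand converges by hypothesis (3) and $\zeta^{n}\to\zeta^{0}$. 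For the integral term I would use that a convergent sequence in $D_{\infty}(\Psi')$ is relatively compact, so by the characterisation of compacts together with Remark 3.6 in \cite{FonsecaMora:Skorokhod} there is a single continuous Hilbertian seminorm $p$ on $\Psi$ with $\sup_{n}\sup_{s\leq T}p'(x^{n}_{s})<\infty$; since $J1$ convergence forces $x^{n}_{s}\to x^{0}_{s}$ at every continuity point $s$ of $x^{0}$, i.e.\ for a.e.\ $s$, while the $(C_{0},1)$ property bounds $\sup_{s,t\leq T}p(A^{n}(s)U^{n}(s,t)\psi)$ uniformly in $n$ and hypothesis (3) gives convergence of this integrand, dominated convergence yields convergence of the integral uniformly for $t$ in bounded intervals. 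Thus $C^{n}\to C^{0}$ uniformly on compacts with $C^{0}$ continuous; combined with $x^{n}\to x^{0}$ in $J1$ and the standard fact that adding a uniformly convergent continuous perturbation preserves $J1$ convergence, the claim follows. Feeding this together with the joint convergence into an extended continuous mapping theorem (equivalently, passing to an a.s.\ Skorokhod representation of the pairs, justified by the Radon character of the limit laws and the reduction to the separable localisation $\Psi'_{p}$) gives $X^{n}\Rightarrow X^{0}$, and uniform tightness of $(X^{n})$ then follows because a weakly convergent sequence of Radon laws on $D_{\infty}(\Psi')$ is tight.

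The hard part will be the integral–term convergence, namely reconciling the time reparametrisations inherent in the $J1$ topology with the time-dependent operator integrand $A^{n}(s)U^{n}(s,t)\psi$. The clean route is to avoid explicit time changes entirely by exploiting that $J1$ convergence forces pointwise convergence off the at most countable jump set of the limit and that the localisation to a common Hilbert space $\Psi'_{p}$ furnishes the uniform bound required for dominated convergence; the uniformity in hypothesis (3) then upgrades pointwise-in-$t$ convergence of $C^{n}$ to uniform-on-compacts convergence, which is precisely what is needed to add it to the $J1$-convergent jump part.
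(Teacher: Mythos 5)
Your proposal is correct in outline but organizes the argument differently from the paper. You realize $X^{n}=\Theta_{n}(\eta^{n},(B^{n})'L^{n})$ for $n$-dependent solution maps and invoke an extended continuous mapping theorem (with Skorokhod representation) for the asymptotically continuous family $\Theta_{n}$. The paper instead introduces the intermediate process $\tilde{X}^{n}_{t}=U^{n}(t,0)'\eta^{n}+F((B^{n})'L^{n})_{t}$ built from the \emph{fixed} limiting map $F(x)(t)=x_{t}+\int_{0}^{t}U^{0}(t,s)'A^{0}(s)'x_{s}\,ds$, so that the ordinary continuous mapping theorem applies via Lemma \ref{lemmaContiConvoMapSkorokSpace}; the discrepancy $X^{n}-\tilde{X}^{n}$, whose pairing with $\psi$ is $\int_{0}^{t}\inner{(B^{n})'L^{n}_{s}}{A^{n}(s)U^{n}(s,t)\psi-A^{0}(s)U^{0}(s,t)\psi}\,ds$, is then killed in probability using the uniform tightness of the drivers (Theorems 5.10 and 5.2 of \cite{FonsecaMora:Skorokhod}), and the conclusion is assembled through the tightness-plus-finite-dimensional-distributions criterion of Theorem 6.6 in \cite{FonsecaMora:Skorokhod} rather than through a continuous mapping theorem on $D_{\infty}(\Psi')$ itself. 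The paper's route buys you freedom from any extended continuous mapping theorem or Skorokhod representation on the non-metrizable space $D_{\infty}(\Psi')$; your route, if completed, is conceptually cleaner but must re-prove the content of Lemma \ref{lemmaContiConvoMapSkorokSpace} uniformly over the varying family $(A^{n},U^{n})$.

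Two of your supporting claims need repair. First, the uniform bound $\sup_{n}\sup_{s\leq T}p'(x^{n}_{s})<\infty$ for a convergent sequence cannot be extracted from Remark 3.6 of \cite{FonsecaMora:Skorokhod}, which localizes a \emph{single} path; you need the characterization of compact (or uniformly tight) subsets of $D_{T}(\Psi')$ as sets localizable in a common $D_{T}(\Psi'_{p})$ (this is exactly what Theorem 5.2/5.10 there provides, and note that $p'$ is not a strong-topology-continuous seminorm on $\Psi'$, so $J1$ convergence in $D_{\infty}(\Psi')$ does not by itself give a.e.\ convergence in $p'$-norm). Second, the closing assertion that a weakly convergent sequence of Radon laws on $D_{\infty}(\Psi')$ is automatically uniformly tight is not available in this non-Polish setting; in the paper uniform tightness is an output of verifying the hypotheses of Theorem 6.6, not a consequence of weak convergence. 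Both points are fixable with the cited machinery, but as written they are the places where your argument leans on tools that do not exist off the shelf here.
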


For our proof of Theorem \ref{theoWeakConvOrnsUhleProce} we will require the following result which makes usage of the particular form taken by the Ornstein-Uhlenbeck process in \eqref{eqDefiOrnUhlCadlagWeakConverg}.  

Let $F:D_{\infty}(\Psi') \rightarrow D_{\infty}(\Psi')$ be defined by 
\begin{equation}\label{eqDefiTransfSoluGenerLange}
F(x)(t)= x_{t}+\int_{0}^{t} U^{0}(t,s)' A^{0}(s)' x_{s} \, ds, \quad 
\end{equation}
Observe that the arguments used in the proof of Lemma \ref{lemmaDefiRandomIntegViaRegula} guarantee that the linear mapping $F$ maps $ D_{\infty}(\Psi')$ into itself.  

\begin{lemma}\label{lemmaContiConvoMapSkorokSpace}
The mapping $F: D_{\infty}(\Psi') \rightarrow D_{\infty}(\Psi')$ defined by \eqref{eqDefiTransfSoluGenerLange} is continuous. 
\end{lemma}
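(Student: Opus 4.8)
The plan is to write $F=\mathrm{Id}+G$, where $G(x)(t)=\int_{0}^{t} U^{0}(t,s)'A^{0}(s)' x_{s}\,ds$, and to exploit that $G$ takes values in the continuous paths. Applying Lemma \ref{lemmaDefiRandomIntegViaRegula} path-by-path to the family $\mathcal{G}(s,t)=A^{0}(s)U^{0}(s,t)$ (strongly continuous in each variable, cf. \eqref{eqDefiTransfSoluGenerLange}), each $G(x)$ has continuous paths and satisfies, for every $\psi\in\Psi$,
\[
\inner{G(x)(t)}{\psi}=\int_{0}^{t} \inner{x_{s}}{A^{0}(s)U^{0}(s,t)\psi}\,ds,
\]
so $G$ maps $D_{\infty}(\Psi')$ into the subspace $C_{\infty}(\Psi')$ of continuous paths. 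Since $\mathrm{Id}$ is Skorokhod-continuous, I would deduce continuity of $F$ from two facts: (i) $G$ is continuous from $D_{\infty}(\Psi')$ into $C_{\infty}(\Psi')$ with the topology of uniform convergence on compacts; and (ii) a net converging in $D_{\infty}(\Psi')$, perturbed by a net converging uniformly on compacts to a \emph{continuous} limit, still converges in $D_{\infty}(\Psi')$.

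For (i), I would fix $T>0$ and a bounded $B\subseteq\Psi$ defining the strong seminorm $\eta_{B}$. For each $\psi$ the set $\{A^{0}(s)U^{0}(s,t)\psi:0\le s\le t\le T\}$ is bounded by strong continuity on the compact triangle; hence $\{A^{0}(s)U^{0}(s,t):0\le s\le t\le T\}$ is pointwise bounded and, as $\Psi$ is barrelled, equicontinuous by Banach--Steinhaus, so $\tilde{B}:=\{A^{0}(s)U^{0}(s,t)\psi:0\le s\le t\le T,\ \psi\in B\}$ is bounded in $\Psi$. The weak formula then gives the key estimate
\[
\sup_{t\in[0,T]}\eta_{B}\big(G(x)(t)-G(y)(t)\big)\ \le\ \int_{0}^{T}\eta_{\tilde{B}}(x_{s}-y_{s})\,ds .
\]
With $y=x$ the fixed limit and an almost optimal time change $\lambda_{\alpha}$ for $x^{\alpha}\to x$, a change of variables $s=\lambda_{\alpha}(u)$ bounds the right-hand side by $e^{\epsilon_{\alpha}}\big(T\delta_{\alpha}+\int_{0}^{T}\eta_{\tilde{B}}(x(\lambda_{\alpha}(u))-x(u))\,du\big)$, where $\delta_{\alpha}=\sup_{t}q_{\gamma}(x^{\alpha}(\lambda_{\alpha}(t))-x(t))\to 0$ and $\epsilon_{\alpha}\to 0$ (with $q_{\gamma}\ge\eta_{\tilde{B}}$). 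The last integral tends to $0$ because, for the \emph{fixed} c\`{a}dl\`{a}g $x$, its $L^{1}$-translation modulus $\omega_{x}(\kappa):=\int_{0}^{T}\sup_{|v-u|\le\kappa}\eta_{\tilde{B}}(x(v)-x(u))\,du$ satisfies $\omega_{x}(\kappa)\to 0$ as $\kappa\downarrow 0$ (dominated convergence in the single real parameter $\kappa$, using that the continuity points of $x$ have full measure), while $\sup_{u}|\lambda_{\alpha}(u)-u|\to 0$.

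For (ii), I would keep the same $\lambda_{\alpha}$. Writing $z^{\alpha}=G(x^{\alpha})$, $z=G(x)$, the triangle inequality gives, for each $\gamma$,
\[
\sup_{t}q_{\gamma}\big((x^{\alpha}+z^{\alpha})(\lambda_{\alpha}(t))-(x+z)(t)\big)\le \delta_{\alpha}+\sup_{s}q_{\gamma}(z^{\alpha}(s)-z(s))+\sup_{t}q_{\gamma}(z(\lambda_{\alpha}(t))-z(t)),
\]
whose three terms vanish respectively by Skorokhod convergence of the $D$-part, by part (i), and by uniform continuity of the fixed continuous $z=G(x)$ on $[0,T]$ together with $\lambda_{\alpha}\to\mathrm{id}$ uniformly; the slope term of $\lambda_{\alpha}$ already tends to $0$. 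This yields $d_{\gamma}(F(x^{\alpha}),F(x))\to 0$ on every $[0,T]$, cf. \eqref{defSkorokhodPseudometrics}. Since the truncation $r_{n}$ only multiplies by the fixed continuous scalar $(n+1-t)$, it preserves both path-continuity and the two modes of convergence, so these finite-interval estimates apply verbatim to $r_{n}(\cdot)$ and give $d^{\infty}_{\gamma}(F(x^{\alpha}),F(x))\to 0$ for every $\gamma$, i.e. $F$ is continuous.

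The main obstacle is precisely the mismatch between the Skorokhod topology, which is insensitive to jump locations and permits time deformations, and the operator $G$, which is built from the genuine time parametrization: one cannot dominate the time-integral distance $\int_{0}^{T}\eta_{\tilde{B}}(x_{s}-y_{s})\,ds$ by any Skorokhod distance uniformly in $y$. The resolution rests on two structural points, which I would emphasize: $G$ produces \emph{continuous} paths, so that on its range the Skorokhod and uniform topologies agree; and one always estimates against a \emph{fixed} limit $x$, so the troublesome time-shift term is absorbed by the fixed function's translation modulus $\omega_{x}$ rather than requiring any control uniform over the target.
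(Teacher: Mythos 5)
Your proof is correct and follows essentially the route the paper intends: the paper does not write out the argument but defers to Lemma 1 of \cite{FernandezGorostiza:1992}, noting that the only ingredients needed are the Banach--Steinhaus theorem on the barrelled space $\Psi$ and the fact that each $x \in D_{T}(\Psi')$ lies in $D_{T}(\Psi'_{p})$ for some continuous seminorm $p$ --- precisely the two facts your argument turns on (equicontinuity of $\{A^{0}(s)U^{0}(s,t)\}$ to bound $\tilde{B}$, and the $\Psi'_{p}$-localization to dominate the translation modulus $\omega_{x}$). You have simply supplied in full the details the paper leaves to the cited reference, including the correct handling of nets via the single-parameter modulus $\omega_{x}(\kappa)$.
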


Lemma \ref{lemmaContiConvoMapSkorokSpace} is an extension of Lemma 1 in  \cite{FernandezGorostiza:1992}, result originally formulated under the assumption that $\Psi$ is a nuclear Fr\'{e}chet space (see also Lemma 4.1 in \cite{PerezAbreuTudor:1992}). 
The proof of Lemma \ref{lemmaContiConvoMapSkorokSpace} can be carried out following line-by-line the arguments used in the proof of Lemma 1 in \cite{FernandezGorostiza:1992} for the family $(A^{0}(s)U^{0}(s,t): 0\leq s \leq t)$. Indeed, the only properties required on $\Psi$ is that the Banach-Steinhaus theorem hold and that each $x \in D_{T}(\Psi')$ belongs to $D_{T}(\Psi'_{p})$ for some continuous seminorm $p$ on $\Psi$, but since we are assuming that $\Psi$ is barrelled, the aforementioned properties are valid (see Theorem 11.9.1 in \cite{NariciBeckenstein} and Remark 3.6 in \cite{FonsecaMora:Skorokhod}).  The details are left to the reader.  

\begin{proof}[Proof of Theorem \ref{theoWeakConvOrnsUhleProce}]   
First, observe that since each $X^{n}_{t}$ is a regular random variable and $\Psi$ is nuclear, each $X^{n}_{t}$ has a Radon distribution (Theorem 2.10 in \cite{FonsecaMora:2018}).   In view of Theorem 6.6 in \cite{FonsecaMora:Skorokhod}  and since $\Psi$ is ultrabornological, it is sufficient to check the following:
\begin{enumerate}[label=(\alph*)]
\item For each $\psi \in \Psi$, the sequence of distributions of $\inner{X^{n}}{\phi}$ is uniformly tight on $D_{\infty}(\R)$.  
\item For all $m\in \N$, $\psi_{1}, \dots, \psi_{m} \in \Psi$, and $t_{1}, \dots, t_{m} \geq 0$, the probability
distribution of $(\inner{X^{n}_{t_{1}}}{\psi_{1}}, \dots, \inner{X^{n}_{t_{m}}}{\psi_{m}})$ converges in distribution to some
probability measure on $\R^{m}$.
\end{enumerate}

In effect, for each $n=1,2, \dots$, let $\tilde{X}^{n}$ be defined by
\begin{equation} \label{eqDefiModifiOrnUhlCadlagWeakConverg}
\tilde{X}^{n}_{t}=U^{n}(t,0)'\eta^{n}+\int_{0}^{t} U^{0}(t,s)'A^{0}(s)' (B^{n})' L^{n}_{s} ds + (B^{n})' L^{n}_{t}, \quad \forall \, t \geq 0. 
\end{equation} 
As for $X^{n}$ we have that each $\tilde{X}^{n}$ defines a $D_{\infty}(\Psi')$-valued random variable. Following the idea of  Fern\'{a}ndez and Gorostiza in \cite{FernandezGorostiza:1992} (Theorem 1), observe that in order to prove (a) and (b) it is enough to show 
\begin{enumerate} [label=(\alph*)]
\setcounter{enumi}{2}
\item $\tilde{X}^{n} \Rightarrow X^{0}$ in $D_{\infty}(\Psi')$ as $n \rightarrow \infty$, 
\item $\int_{0}^{t} \, \inner{(B^{n})' L^{n}_{s}}{(A^{n}U^{n}(s,t)\psi-A^{0}U^{0}(s,t)\psi)} \, ds \rightarrow 0$ in probability as $n \rightarrow \infty$ for each $t \geq 0$ and $\psi \in \Psi$,     
\item $d^{\infty}\left(\inner{X^{n}}{\psi}, \inner{\tilde{X}^{n}}{\psi} \right) \rightarrow 0$ in probability as $n \rightarrow \infty$ for each $\psi \in \Psi$, where $d^{\infty}(\cdot, \cdot)$ denotes the Skorokhod metric on $D_{\infty}(\R)$. 
\end{enumerate}
In effect, assume that (c), (d) and (e) holds. We start by proving (a). First, (c) shows that for each $\psi \in \Psi$ we have $\inner{\tilde{X}^{n}}{\psi} \Rightarrow \inner{X^{0}}{\psi}$ in $D_{\infty}(\R)$. Then from (e) we have $\inner{X^{n}}{\psi} \Rightarrow \inner{X^{0}}{\psi}$ in $D_{\infty}(\R)$ for each $\psi \in \Psi$, which implies (a).  

Now we prove (b). Assumption (4) in Theorem \ref{theoWeakConvOrnsUhleProce} and (d) shows that 
as $n \rightarrow \infty$, 
$$\left(\inner{X^{n}_{t_{1}}-\tilde{X}^{n}_{t_{1}}}{\psi_{1}}, \dots, \inner{X^{n}_{t_{m}}-\tilde{X}^{n}_{t_{m}}}{\psi_{m}}\right) \Rightarrow (0, \dots, 0). $$ Hence from (c) we conclude that (b) holds. 

Proof of (c): First, observe that $X^{0}=U^{0}(t,0)' \eta^{0}+ F((B^{0})'L^{0})$ and for each $n=1,2,\dots,$ we have $\tilde{X}^{n}=U^{n}(t,0)' \eta^{n}+ F((B^{n})'L^{n})$ where $F$ is the mapping defined in \eqref{eqDefiTransfSoluGenerLange}. Assumption (1) in Theorem \ref{theoWeakConvOrnsUhleProce} shows that for $\psi \in \Psi$ the real-valued processes $\inner{U^{n}(t,0)'\eta^{n}}{\psi}=\inner{\eta^{n}}{U^{n}(t,0)\psi}$ and $\inner{F((B^{n})'L^{n})(t)}{\psi}=\int_{0}^{t} \inner{(B^{n})'L^{n}_{s}}{A(s) U(s,t)\psi} ds+\inner{(B^{n})'L^{n}_{t}}{\psi}$ are independent. Hence the $\Psi'$-valued regular processes $U^{n}(t,0)'\eta^{n}$  and $F(B'L^{n})(t)$ are independent (see \cite{FonsecaMora:Levy}, Proposition 2.3). Thus, to prove (c) it suffices to show $U^{n}(\cdot,0)' \eta^{n} \Rightarrow U^{0}(\cdot,0)' \eta^{0}$ and $F((B^{n})'L^{n}) \Rightarrow F((B^{0})'L^{0})$ with convergence in $D_{\infty}(\Psi')$. 

In effect, for every $\psi \in \Psi$ assumptions (2)-(3) in Theorem \ref{theoWeakConvOrnsUhleProce} imply that  $\inner{U^{n}(\cdot,0)' \eta^{n}}{\psi} \Rightarrow \inner{U^{0}(\cdot,0)' \eta^{0}}{\psi}$ in $D_{\infty}(\R)$ and hence $(\inner{U^{n}(\cdot,0)' \eta^{n}}{\psi}: n \in \N)$ is uniformly tight in $D_{\infty}(\R)$. 
Likewise, for any choice $\psi_{1}, \dots, \psi_{m} \in \Psi$, and $t_{1}, \dots, t_{m} \geq 0$, we have as $n \rightarrow \infty$, 
\begin{equation*}
(\inner{U^{n}(0,t_{1})\eta^{n}}{\psi_{1}}, \dots, \inner{U^{n}(0,t_{m})\eta^{n}}{\psi_{m}}) \Rightarrow (\inner{U^{0}(0,t_{1})\eta^{0}}{\psi_{1}}, \dots, \inner{U^{0}(0,t_{m})\eta^{0}}{\psi_{m}}).
\end{equation*}
Then $U^{n}(\cdot,0)' \eta^{n} \Rightarrow U^{0}(\cdot,0)' \eta^{0}$ in $D_{\infty}(\Psi')$ by Theorem 6.6 in \cite{FonsecaMora:Skorokhod}. 
    
Finally, by assumption (4) in Theorem \ref{theoWeakConvOrnsUhleProce} and by Lemma \ref{lemmaContiConvoMapSkorokSpace} we have  $F((B^{n})'L^{n}) \Rightarrow F((B^{0})'L^{0})$ in $D_{\infty}(\Psi')$. This proves (c) by the arguments given above. 

Proof of (d): Assumption (4) in Theorem \ref{theoWeakConvOrnsUhleProce} shows that for each $\psi \in \Psi$ the family $(\inner{(B^{n})'L^{n}}{\psi}: n \in \N)$ is uniformly tight  in $D_{\infty}(\R)$. Since $\Psi$ is ultrabornological, Theorem 5.10 in \cite{FonsecaMora:Skorokhod} shows that the family  $((B^{n})' L^{n}: n \in \N)$ is uniformly tight  in $D_{\infty}(\Psi')$. Since $\Psi$ is barrelled, Theorem 5.2 in \cite{FonsecaMora:Skorokhod} (see the the arguments used in its proof) shows that for each $t>0$ and $\epsilon \in (0,1)$  there exists a continuous Hilbertian seminorm $q$ on $\Psi$ and $C>0$ such that  
$$ \inf_{n \geq 1} \Prob \left( \sup_{0 \leq s \leq t} q'((B^{n})'L^{n}_{s}) \leq C \right) > 1-\epsilon.   $$
Then
\begin{flalign*}
& \Prob \left( \abs{ \int_{0}^{t} \, \inner{(B^{n})' L^{n}_{s}}{(A^{n}(s)U^{n}(s,t)\psi - A^{0}(s)U^{0}(s,t)\psi)} \, ds } > \epsilon \right) \\
& \leq \Prob \left( t \, \sup_{0 \leq s \leq t} q'((B^{n})'L^{n}_{s}) \sup_{0 \leq s \leq t} q \left( A^{n}(s)U^{n}(s,t)\psi - A^{0}(s)U^{0}(s,t)\psi \right) > \epsilon \right)\\
& \leq \epsilon + \Prob \left( \sup_{0 \leq s \leq t} q \left( A^{n}(s)U^{n}(s,t)\psi - A^{0}(s)U^{0}(s,t)\psi \right) > \epsilon/Ct \right). 
\end{flalign*}
  The last term tends to 0 as $n \rightarrow \infty$ by assumption (3) in Theorem \ref{theoWeakConvOrnsUhleProce}  and since $\epsilon$ is arbitrary we have proved (d).  
  
Proof of (e): Let $\psi \in \Psi$. Since the Skorokhod topology is weaker than the local uniform topology, it suffices to show that for each $T>0$ and $\psi \in \Psi$, 
$$\sup_{0 \leq t \leq T} \abs{\inner{X^{n}_{t}-\tilde{X}^{n}_{t}}{\psi}}=\sup_{0 \leq t \leq T}  \abs{\int_{0}^{t} \, \inner{(B^{n})' L^{n}_{s}}{(A^{n}(s)U^{n}(s,t)\psi - A^{0}(s)U^{0}(s,t)\psi)} \, ds},$$
converges to $0$ in probability as $n \rightarrow \infty$. But this can be proved following the same arguments used in the proof of (d). 
\end{proof}

\begin{remark}
Theorem \ref{theoWeakConvOrnsUhleProce} constitutes an extension of Theorem 4 in \cite{FernandezGorostiza:1992} there formulated in the context of Fr\'{e}chet spaces to our more general context of quasi-complete, bornological nuclear spaces. We point out however that unlike our results in this section in \cite{FernandezGorostiza:1992} the question of existence and uniqueness of solutions to each equation  \eqref{eqDefiSequenGenLangEqua} is not addressed.

Theorem \ref{theoWeakConvOrnsUhleProce} also extends the results obtained in \cite{KallianpurPerezAbreu:1989, PerezAbreuTudor:1992}, also within the context of Fr\'{e}chet spaces. In \cite{KallianpurPerezAbreu:1989, PerezAbreuTudor:1992} the assumptions on the family of generators  are stronger than our assumptions in Theorem \ref{theoWeakConvOrnsUhleProce} and are pattered after those required for $A^{n}$ to generate a unique $U^{n}$ (as in Example \ref{examplePertubEquations}).  
\end{remark}

For each $n=0,1,2,\dots$, observe that since $(B^{n})' \in \mathcal{L}(\Phi',\Psi')$ one can easily verify that $(B^{n})'L^{n}$ is a $\Psi'$-valued L\'{e}vy process, hence possesses a family of characteristics $(\goth{m}_{n}, \mathcal{Q}_{n}, \nu_{n}, \rho_{n})$ which determine uniquely its distribution as in \eqref{levyKhintchineFormulaLevyProcess}. Given the above information, in the next result we reformulate the statement of Theorem \ref{theoWeakConvOrnsUhleProce} to replace the assumption of weak convergence of $(B^{n})'L^{n}$ to $(B^{0})' L^{0}$ in $D_{\infty}(\Psi')$ by convergence of finite-dimensional distributions and some properties of the sequence of characteristics $(\goth{m}_{n}, \mathcal{Q}_{n}, \nu_{n}, \rho_{n})$. 

\begin{theorem}\label{theoWeakConvOrnsUhleProceVerCharac} 
For each $n=0,1,2,\dots$, let $(\goth{m}_{n}, \mathcal{Q}_{n}, \nu_{n}, \rho_{n})$ be the characteristics of the $\Psi'$-valued L\'{e}vy process $(B^{n})'L^{n}$.
Assume the following:
\begin{enumerate}
\item For $n=0,1,2,\dots$, $\eta^{n}$ is independent of $L^{n}$. 
\item $\eta^{n} \Rightarrow \eta^{0}$. 
\item $U^{n}(0,t) \psi \rightarrow U^{0}(0,t) \psi$ as $n \rightarrow \infty$ for each $\psi \in \Psi$ and $t \geq 0$, and $A^{n}(s) U^{n}(s,t) \psi \rightarrow A^{0}(s) U^{0}(s,t)$ as $n \rightarrow \infty$ for each $\psi \in \Psi$ uniformly for $(s,t)$ in bounded intervals of time. 
\item There exists a continuous Hilbertian seminorm $q$ on $\Psi$ such that $\mathcal{Q}_{n} \leq q$ and $\rho_{n} \leq q$ $\forall n \in \N$, and such that the following is satisfied:
\begin{enumerate}
\item \label{condDrift} $(\goth{m}_{n}: n \in \N)$ is relatively compact in $\Psi'$,
\item \label{condGaussCova}  $\displaystyle{\sup_{n \in \N} \norm{i_{\mathcal{Q}_{n},q}}_{\mathcal{L}_{2}(\Phi_{q}, \Phi_{\mathcal{Q}_{n}})} < \infty}$, 
\item \label{condLevyMea} $\displaystyle{\sup_{n \in \N} \int_{\Phi'}(q'(f)^{2} \wedge 1) \nu_{n}(df)<\infty. }$
\end{enumerate}
\item $\forall \, m \in \N$, $\phi_{1}, \dots, \phi_{m} \in \Phi$, $t_{1}, \dots, t_{m} \in [0,T]$, 
$ (\inner{(B^{n})'L^{n}_{t_{1}}}{\phi_{1}}, \dots, \inner{(B^{n})'L^{n}_{t_{m}}}{\phi_{m}})$ converges in distribution to $ (\inner{(B^{0})'L^{0}_{t_{1}}}{\phi_{1}}, \dots, \inner{(B^{0})'L^{0}_{t_{m}}}{\phi_{m}})$.
\end{enumerate}
Then the sequences $( (B^{n})'L^{n}: n \in \N)$ and $(X^{n}:n \in \N)$ are each uniformly tight in $D_{\infty}(\Psi')$ and we have $(B^{n})' L^{n} \Rightarrow (B^{0})' L^{0}$ and $X^{n} \Rightarrow X^{0}$ in $D_{\infty}(\Psi')$ as $n \rightarrow \infty$.
\end{theorem}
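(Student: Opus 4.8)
The strategy is to \emph{reduce} Theorem \ref{theoWeakConvOrnsUhleProceVerCharac} to Theorem \ref{theoWeakConvOrnsUhleProce}. Hypotheses (1), (2) and (3) here coincide verbatim with hypotheses (1), (2) and (3) of Theorem \ref{theoWeakConvOrnsUhleProce}, so the entire content of the argument is to deduce from (4) and (5) the remaining hypothesis (4) of Theorem \ref{theoWeakConvOrnsUhleProce}, namely the weak convergence $(B^{n})'L^{n} \Rightarrow (B^{0})'L^{0}$ in $D_{\infty}(\Psi')$. Once this is in hand, Theorem \ref{theoWeakConvOrnsUhleProce} immediately delivers both the uniform tightness of $(X^{n}: n \in \N)$ and the convergence $X^{n} \Rightarrow X^{0}$, while the uniform tightness of $((B^{n})'L^{n}: n \in \N)$ is produced as a by-product of the tightness step below.

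To prove $(B^{n})'L^{n} \Rightarrow (B^{0})'L^{0}$ in $D_{\infty}(\Psi')$ I would invoke the criterion of Theorem 6.6 in \cite{FonsecaMora:Skorokhod}: since each $(B^{n})'L^{n}$ is a $\Psi'$-valued L\'{e}vy process with Radon law and $\Psi$ is ultrabornological, it suffices to verify (a) that for every $\psi \in \Psi$ the laws of $\inner{(B^{n})'L^{n}}{\psi}$ are uniformly tight in $D_{\infty}(\R)$, and (b) that the finite-dimensional distributions converge. Item (b) is precisely hypothesis (5). The bulk of the work is therefore item (a), together with the passage from real-valued uniform tightness to uniform tightness in $D_{\infty}(\Psi')$, which is supplied by Theorem 5.10 in \cite{FonsecaMora:Skorokhod}.

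For the tightness I would argue along the L\'{e}vy--It\^{o} structure. Fixing $\psi \in \Psi$, the process $\inner{(B^{n})'L^{n}}{\psi}$ is a real-valued L\'{e}vy process whose triplet is read off the characteristics through \eqref{levyKhintchineFormulaLevyProcess}: a drift governed by $\inner{\goth{m}_{n}}{\psi}$, a Gaussian variance $\mathcal{Q}_{n}(\psi)^{2} \leq q(\psi)^{2}$, and the jump measure obtained by pushing $\nu_{n}$ forward under $g \mapsto \inner{g}{\psi}$. Condition \ref{condDrift} bounds the drift uniformly in $n$, the framing inequality $\mathcal{Q}_{n} \leq q$ bounds the Gaussian variance, and using $\abs{\inner{g}{\psi}} \leq q'(g)\,q(\psi)$ together with the elementary inequality $(q'(g)^{2}q(\psi)^{2}) \wedge 1 \leq \max(q(\psi)^{2},1)\,(q'(g)^{2}\wedge 1)$ one controls the truncated second moment $\int_{\Psi'}(\inner{g}{\psi}^{2}\wedge 1)\,\nu_{n}(dg)$ by a constant multiple of the quantity in condition \ref{condLevyMea}, uniformly in $n$. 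These three uniform scalar bounds yield, via the classical tightness criterion for real semimartingales with uniformly bounded characteristics, the uniform tightness of $(\inner{(B^{n})'L^{n}}{\psi}: n \in \N)$ in $D_{\infty}(\R)$, and Theorem 5.10 in \cite{FonsecaMora:Skorokhod} lifts this to uniform tightness of $((B^{n})'L^{n}: n \in \N)$ in $D_{\infty}(\Psi')$. The Hilbert--Schmidt bound \ref{condGaussCova} enters at the level of the $\Psi'$-valued Gaussian component: it guarantees that the covariances $\mathcal{Q}_{n}$ determine a tight family of Radon Gaussian measures on $\Psi'$, which is what makes the limit a genuine $\Psi'$-valued L\'{e}vy process and secures the lift in the infinite-dimensional setting.

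The delicate point, and the step I expect to require most care, is exactly this translation of the abstract $\Psi'$-valued characteristics into the scalar tightness data while keeping every estimate governed by the single seminorm $q$; the interplay of the three conditions \ref{condDrift}--\ref{condLevyMea}, and in particular the simultaneous domination of drift, Gaussian and jump parts, is what allows the nuclear-space tightness lift to apply. Once $(B^{n})'L^{n} \Rightarrow (B^{0})'L^{0}$ in $D_{\infty}(\Psi')$ is established, hypothesis (4) of Theorem \ref{theoWeakConvOrnsUhleProce} holds, and that theorem delivers at once the uniform tightness of $(X^{n}: n \in \N)$ and the convergence $X^{n} \Rightarrow X^{0}$, completing the proof.
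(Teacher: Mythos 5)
Your overall strategy coincides with the paper's: both arguments reduce the statement to Theorem \ref{theoWeakConvOrnsUhleProce} by verifying its hypothesis (4), namely $(B^{n})'L^{n} \Rightarrow (B^{0})'L^{0}$ in $D_{\infty}(\Psi')$, and then let that theorem deliver the tightness and convergence of $(X^{n})$. The difference lies in how that hypothesis is obtained. The paper disposes of it in one line by citing Theorem 7.2 in \cite{FonsecaMora:Skorokhod}, a packaged criterion for uniform tightness and weak convergence of a sequence of $\Psi'$-valued L\'{e}vy processes formulated precisely in terms of hypotheses (4)(a)--(c) and (5). You instead reconstruct such a criterion from lower-level ingredients of the same reference (Theorem 6.6 for the convergence criterion, Theorem 5.10 for lifting scalar uniform tightness to $D_{\infty}(\Psi')$ when $\Psi$ is ultrabornological) combined with classical tightness theory for real-valued L\'{e}vy processes with uniformly bounded triplets. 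This is a legitimate alternative route, consistent with how those two auxiliary theorems are used elsewhere in the paper, and it has the merit of explaining what each of the conditions (a)--(c) is for; the citation route is shorter and shifts the technical burden to the external theorem.

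Two points in your sketch would need more care before it could genuinely replace the citation. First, when you read off the scalar triplet of $\inner{(B^{n})'L^{n}}{\psi}$ from \eqref{levyKhintchineFormulaLevyProcess}, the truncation there is over $B_{\rho_{n}'}(1)$, which varies with $n$; passing to a fixed scalar truncation introduces a drift correction of the form $\int_{\Psi'} \inner{f}{\psi}\left(\ind{\{\abs{\inner{f}{\psi}}\leq 1\}}{f}-\ind{B_{\rho_{n}'}(1)}{f}\right)\nu_{n}(df)$, and its uniform boundedness in $n$ does not follow immediately from (4)(c) alone, since on the region $B_{q'}(1)\setminus B_{\rho_{n}'}(1)$ the integrand $q'(f)^{2}\wedge 1$ may be small while the $\nu_{n}$-mass there is not directly controlled. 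Second, in the architecture you chose (scalar tightness, lift via Theorem 5.10, identification of the limit via (5)) it is not visible where condition (4)(b) is actually consumed: the scalar Gaussian variances are already bounded by $\mathcal{Q}_{n}\leq q$. Either your route proves the theorem under weaker hypotheses than stated, or --- more likely --- the Hilbert--Schmidt bound is doing work (e.g.\ equicontinuity of the characteristic functionals) that your sketch silently omits. Neither point undermines the theorem, since the paper's citation of Theorem 7.2 covers both, but as written your argument has not fully discharged them.
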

\begin{proof}
By Theorem 7.2 in \cite{FonsecaMora:Skorokhod}, assumptions (4) and (5) show that $( (B^{n})' L^{n}: n \in \N)$ is uniformly tight in $D_{\infty}(\Psi')$ and that $(B^{n})' L^{n} \Rightarrow (B^{0})' L^{0}$ in $D_{\infty}(\Psi')$ as $n \rightarrow \infty$. Theorem \ref{theoWeakConvOrnsUhleProce} then shows that 
$(X^{n}:n \in \N)$ is uniformly tight in $D_{\infty}(\Psi')$ and $X^{n} \Rightarrow X^{0}$ in $D_{\infty}(\Psi')$ as $n \rightarrow \infty$.
\end{proof}


\textbf{Acknowledgements} {The author acknowledge The University of Costa Rica for providing financial support through the grant ``B9131-An\'{a}lisis Estoc\'{a}stico con Procesos Cil\'{i}ndricos''.}

\end{document}